\setlist[1,itemize]{label=\footnotesize{$\bullet$}}
\setlist[2,itemize]{label=\footnotesize{$\ast$}}
\setlist[1,enumerate]{label=\arabic*)}
\setlist[2,enumerate]{label=\alph*)}
\numberwithin{equation}{section}
\tikzstyle directed=[postaction={decorate,decoration={markings, mark=at position .55 with {\arrow{stealth}}}}]
\newtheorem{thm}[equation]{Théorème}
\newtheorem{prop}[equation]{Proposition}
\newtheorem{lemme}[equation]{Lemme}
\newtheorem{cor}[equation]{Corollaire}
\newtheorem{maintheorem}{Théorème}
\theoremstyle{definition}
\newtheorem{defi}[equation]{Définition}
\newtheorem*{defi*}{Définition}
\newtheorem*{conj*}{Conjecture}
\theoremstyle{remark}
\newtheorem{rmq}[equation]{Remarque}
\newtheorem*{rmq*}{Remarque}
\newtheorem{ex}[equation]{Exemple}
\newtheorem*{ex*}{Exemple}
\newtheorem{fait}[equation]{Fait}
\newcommand{\B}{\mathcal{B}}
\newcommand{\C}{\mathbb{C}}
\newcommand{\F}{\mathbb{F}}
\renewcommand{\j}{\mathfrak{j}}
\newcommand{\N}{\mathbb{N}}
\renewcommand{\P}{\mathbb{P}}
\newcommand{\q}{\mathfrak{q}}
\newcommand{\R}{\mathbb{R}}
\newcommand{\V}{\mathcal{V}}
\newcommand{\Z}{\mathbb{Z}}
\newcommand{\isome}[1]{#1_{{\scriptscriptstyle{\#}}}}
\newcommand{\orbl}{\mathcal{E}}
\renewcommand{\H}{\mathbb{H}^{\infty}}
\newcommand{\Hh}{\mathbb{H}}
\renewcommand{\epsilon}{\varepsilon}
\renewcommand{\l}{\ell}
\newcommand{\dashmapsto}{\mapstochar\dashrightarrow}
\DeclareMathOperator{\Bir}{Bir}
\DeclareMathOperator{\PGL}{PGL}
\DeclareMathOperator{\PSL}{PSL}
\DeclareMathOperator{\PM}{\mathcal{Z}}
\DeclareMathOperator{\Bs}{Bs}
\DeclareMathOperator{\NS}{N^1}
\DeclareMathOperator{\kk}{k}
\DeclareMathOperator{\can}{K}
\DeclareMathOperator{\fcan}{k_{\P^2}}
\DeclareMathOperator{\card}{Card}
\DeclareMathOperator{\car}{c}
\DeclareMathOperator{\dist}{d}
\DeclareMathOperator{\I}{I}
\DeclareMathOperator{\Ima}{Im}
\DeclareMathOperator{\Reel}{Re}
\DeclareMathOperator{\id}{id}
\DeclareMathOperator{\Maj}{Maj}
\DeclareMathOperator{\n}{\eta}
\DeclareMathOperator{\argcosh}{argcosh}
\DeclareMathOperator{\supp}{supp}
\title{Pavage de Voronoï associé au groupe de Cremona}
\author{Anne Lonjou}
\date{8 mai 2018}
\address{Universität Basel, Departement Mathematik und Informatik,
Spiegelgasse 1,,
4051 Basel, Switzerland}
\email{anne.lonjou@unibas.ch} 
\subjclass[2010]{14E07, 20F65}
\begin{document}

\maketitle

\begin{abstract}
L'action du groupe de Cremona de rang $2$ sur un espace hyperbolique de dimension infinie est récemment devenue centrale dans l’étude de ce groupe. Guidée par l'analogie avec l'action de $\PSL(2,\Z)$ sur le demi-plan de Poincaré, nous exhibons un domaine fondamental pour cette action en considérant un pavage de Voronoï.
Nous étudions ensuite les cellules adjacentes à une cellule de Voronoï donnée, ainsi que les cellules qui partagent des points communs au bord à l'infini.
\end{abstract}

\renewcommand{\abstractname}{Abstract}
\begin{abstract}
The action of the Cremona group of rank $2$ on an infinite dimensional hyperbolic space is the main recent tool to study the Cremona group. Following the analogy with the action of $\PSL(2,\Z)$ on the Poincaré half-plane, we exhibit a fundamental domain for this action by considering a Voronoï tessellation. Then we study adjacent cells to a given cell, as well as cells that share common points in the boundary at infinity.
\end{abstract}

\setcounter{tocdepth}{1}
\tableofcontents
\section*{Introduction}
Le groupe de Cremona de rang $2$ sur un corps $\kk$, noté $\Bir(\P^2_{\kk})$, est le groupe des transformations birationnelles du plan projectif. Dans l’étude qui nous intéresse, l'action du groupe modulaire $\PSL(2,\Z)$ sur le demi-plan de Poincaré s'est avérée un guide précieux. Dans la suite de l'introduction, nous décrivons diverses analogies entre $\PSL(2,\Z)$ et le groupe de Cremona. 

\subsection*{Générateurs}
Soient \[S=\begin{pmatrix}
0 & -1\\
1 & \hspace{0,25cm}0
\end{pmatrix},\  T=\begin{pmatrix}
1 & 1\\
0 & 1
\end{pmatrix} \text{ et }
U=\begin{pmatrix}
1 & -1\\
1 & \hspace{0,25cm}0
\end{pmatrix}\] trois éléments de $\PSL(2,\Z)$. Les matrices $S$ et $U$ sont respectivement d'ordre $2$ et $3$ alors que la matrice $T$ est d'ordre infini.
Le groupe modulaire est de présentation finie. Il est par exemple engendré par les matrices $S$ et $U$ et les relations sont engendrées par les relateurs $S^2=\I_2$ et $U^3=\I_2$, ou encore par les matrices $S$ et $T$ avec les relations engendrées par $S^2$ et $(TS)^3$ : 
\[\PSL(2,\Z)=\langle S,U\mid S^2 \text{ et } U^3\rangle \simeq \langle S,T\mid S^2 \text{ et } (TS)^3\rangle .\]

Bien que le groupe de Cremona possède un système de générateurs connu, ce groupe n'est pas de type fini et ceci est valable sur n'importe quel corps (voir \cite[Proposition 3.6]{Ca}). Lorsque le corps $\kk$ est algébriquement clos, d'après le théorème de Noether-Castelnuovo, le groupe de Cremona est engendré par le groupe des automorphismes du plan projectif isomorphe à $\PGL(3,\kk)$ et par l'involution quadratique standard qui dans une carte affine s'écrit : $\sigma : (x,y)\dashmapsto (\frac{1}{x},\frac{1}{y})$. Un autre système de générateurs qui se trouve être plus pratique dans certaines circonstances est $\PGL(3,\kk)$ ainsi que le sous-groupe de Jonquières. Un élément du groupe de Jonquières est une application préservant le pinceau de droites $\{y=\text{constante}\}$.
À noter que l'application $\sigma$ appartient au groupe de Jonquières.

\subsection*{Action sur un espace hyperbolique}
Considérons le plan hyperbolique $\Hh^2$. 
Un de ses modèles est le demi-plan de Poincaré qui est défini comme le sous-espace du plan complexe constitué des nombres complexes de partie imaginaire strictement positive : 
\[\Hh^2=\{z\in\C\mid \Ima(z)>0\}.\]
Il est muni de la métrique définie de la façon suivante. Pour tous $z_1,z_2\in \Hh^2$, \[\dist(z_1,z_2)=\argcosh\bigg(1+\frac{(\Ima(z_1)-\Ima(z_2))^2+(\Reel(z_1)-\Reel(z_2))^2}{2\Ima(z_1)\Ima(z_2)}\bigg).\]
Le groupe modulaire agit sur $\Hh^2$ par isométries : pour tout $\begin{pmatrix}
a & b\\
c & d
\end{pmatrix}\in\PSL(2,\Z)$ et pour tout point $z\in\Hh^2$,
\[ \begin{pmatrix}
a & b\\
c & d
\end{pmatrix}\cdot z =\frac{az+b}{cz+d}.  \]
Les points à l'infini ou appelés également points au bord sont les points dont la partie imaginaire est nulle ainsi que le point noté $\infty$ qui permet de compactifier la droite réelle $\{y=0\}$.
Pour cette action, les matrices $S$ et $U$ sont des isométries elliptiques puisqu'elles fixent respectivement les points $i\in\Hh^2$ et $\frac{1}{2}(1+i\sqrt{3})\in\Hh^2$.
La matrice $T$ est une isométrie parabolique car elle fixe un unique point au bord, le point $\infty$. Une matrice est hyperbolique si elle fixe deux points à l'infini comme par exemple la matrice $\begin{pmatrix}
1 & 2\\
2 & 5
\end{pmatrix}$ qui fixe les deux points réels $-1-\sqrt{2}$ et $-1+\sqrt{2}$.

Un autre de ses modèles est le modèle de l'hyperboloïde. Il est défini comme la nappe d'hyperboloïde
\[\Hh^2:=\{(x,y,z)\in\R^3\mid x^2-y^2-z^2=1 \text{ et }x>0\},\]
munie de la distance \[\dist(u,v)=\argcosh\B(u,v), \ \text{ pour tous } u,v\in\Hh^2\] où $\B(\cdot,\cdot)$ est la forme bilinéaire associée à la forme quadratique précédente.

Le groupe de Cremona agit également par isométries sur un espace hyperbolique qui est un analogue de dimension infinie au modèle de l'hyperboloïde de $\Hh^2$. L'espace de Picard-Manin associé à $\P^2$ est la limite inductive des groupes de Picard des surfaces obtenues en éclatant toute suite finie de points de $\P^2$, infiniment proches ou non. Il est muni d'une forme d'intersection de signature $(1,\infty)$. En considérant une nappe d'hyperboloïde, nous pouvons lui associer un espace hyperbolique de dimension infinie, noté $\H$.
Les éléments de $\PGL(3,\kk)$ sont tous elliptiques, ceux du groupe de Jonquières sont elliptiques lorsque les degrés de leurs itérées sont bornés et paraboliques sinon. L'application de Hénon $h_n : (x,y)  \mapsto  (y,y^n-x)$ est un exemple d'élément hyperbolique.

\subsection*{Non-simplicité du groupe de Cremona}
Le groupe $\PSL(2,\Z)$ n'est pas un groupe simple. En effet, pour tout entier $N>1$, le sous-groupe 
\[\Gamma(N)=\{A\in \PSL(2,\Z)\mid A\equiv \pm \I_2 \ (\text{mod } N)\},\]
est un sous-groupe distingué de $\PSL(2,\Z)$. En fait, il possède également de nombreux sous-groupes distingués d'indice infini puisque c'est un groupe SQ-universel, c'est-à-dire que tout groupe dénombrable se plonge dans un quotient de $\PSL(2,\Z)$.

En 2013, en faisant agir le groupe de Cremona sur $\H$, S. Cantat et S. Lamy montrent que lorsque le corps est algébriquement clos, le groupe de Cremona n'est pas simple (voir \cite{CL}). Ils construisent des sous-groupes propres dont tous les éléments sont de grands degrés. Pour montrer cela, ils élaborent une variante de la théorie de petite simplification. Récemment, nous avons étendu ce résultat à un corps quelconque (voir \cite{Lo}), en utilisant un théorème de petite simplification dû à F. Dahmani, V. Guirardel et D. Osin (voir \cite{DGO}) établi dans le cadre général de groupes agissant par isométries sur des espaces Gromov-hyperboliques. Nous sommes dans ce cadre d'une part car l'espace $\Hh^2$ est hyperbolique au sens de Gromov. En effet, chacun de ses triangles vérifie la propriété suivante : tout côté est contenu dans le $\ln(1+\sqrt{2})$-voisinage de la réunion de ses deux autres côtés. D'autre part, un triangle de $\H$ vivant dans une copie de $\Hh^2$, l'espace $\H$ est lui aussi $\ln(1+\sqrt{2})$-hyperbolique. En fait, comme conséquence des résultats de F. Dahmani, V. Guirardel et D. Osin, nous obtenons dans \cite{Lo} des propriétés plus précises que la seule non-simplicité, à savoir que pour $\kk$ un corps quelconque, le groupe $\Bir(\P^2_k)$ contient des sous-groupes distingués libres, et est SQ-universel.

\subsection*{Pavage de Voronoï}
L'espace $\H$ est central dans l'étude du groupe de Cremona, cependant un domaine fondamental pour l'action du groupe de Cremona sur $\H$ n'avait jusqu'alors pas été étudié. Un domaine fondamental est un sous-espace fermé de $\H$ tel que son orbite sous l'action du groupe de Cremona recouvre $\H$ et que les éléments de l'orbite de son intérieur soient deux à deux disjoints. En fait, il va s'avérer plus commode de travailler avec les cellules de Voronoï. Le but de cet article est de construire et étudier un pavage de Voronoï associé à cette action. 

Lorsqu'un groupe agit isométriquement et discrètement sur un espace métrique géodésique et qu'il existe un point de l'espace dont le stabilisateur est réduit au neutre, \og les cellules de Voronoï\fg \ sont un outil naturel pour construire un domaine fondamental pour cette action. 
\begin{defi*}
	Soit $\mathcal{P}$ un ensemble discret de points d'un espace métrique géodésique $X$. À tout point $p$ de $\mathcal{P}$ nous associons un ensemble de $X$, noté $\V(p)$ et appelé \emph{cellule de Voronoï} associée au point $p$, constitué des points de $X$ qui sont plus proches de $p$ que des autres points de $\mathcal{P}$ : \[\V(p)=\{x\in X\mid \text{ pour tout } q\in\mathcal{P},\  \dist(x,p)\leq \dist(x,q) \}.\] Les points $p$ sont les \og centres \fg{} des cellules de Voronoï.
\end{defi*}
En effet, en considérant l'orbite d'un point dont le stabilisateur est réduit au neutre, nous obtenons un ensemble discret de points de notre espace métrique. La cellule de Voronoï associée à un point de cette orbite est un domaine fondamental. 

Revenons à l'action du groupe modulaire sur le demi-plan de Poincaré $\Hh^2$. Il est bien connu que l'ensemble des points qui se situent au-dessus de la géodésique dont les points à l'infini sont $-1$ et $1$, et qui ont une partie réelle supérieure ou égale à $-\frac{1}{2}$ et inférieure ou égale à $\frac{1}{2}$ est un domaine fondamental. 
Considérons l'orbite du point $2i$ sous l'action de $\PSL(2,\Z)$. Les cellules de Voronoï correspondant à cet ensemble de points coïncident avec l'orbite du domaine fondamental donné plus haut. 
Cependant, si nous considérons l'orbite du point $i$ sous l'action de $\PSL(2,\Z)$, la cellule de Voronoï correspondant au point $i$ n'est plus le domaine fondamental de l'action du groupe modulaire sur $\Hh^2$, mais l'orbite de ce dernier par le stabilisateur de $i$ qui est isomorphe à $\Large{\sfrac{\Z}{2\Z}}$.

L'objet de la section \ref{section_def_orbl}, est de construire un pavage de Voronoï associé à l'action du groupe de Cremona sur $\H$. Notons $\l\in \H$ la classe d'une droite de $\P^2$. Nous considérons les cellules de Voronoï associées à l'orbite de $\l$ par l'action du groupe de Cremona. Le stabilisateur de $\l$ étant $\PGL(3,\kk)$, nous identifions deux applications qui diffèrent par un automorphisme : 
\[f\sim g \Leftrightarrow \text{ il existe }a\in\PGL(3,\kk), \ f=g\circ a.\]
La cellule de Voronoï associée à $\l$ correspond à l'orbite d'un domaine fondamental sous l'action de $\PGL(3,\kk)$. Remarquons que deux applications appartenant à la même classe d'équivalence donnent la même cellule de Voronoï. Soit $f$ appartenant à $\Bir(\P^2)$, nous notons $\V(f)$ la cellule associée à $f$ et l'application $f$ est appelée \og germe\fg.
Dans cette section nous nous restreignons à un sous-espace convexe de $\H$ qui est essentiellement l'enveloppe convexe de l'orbite de $\l$ sous l'action de $\Bir(\P^2)$. Nous prenons en fait un convexe $\orbl$ un peu plus gros pour des raisons techniques (voir la définition de $\orbl$ à la sous-section \ref{sous_section_restriction}). Nous montrons que les cellules de Voronoï recouvrent l'espace $\orbl$ (Corollaire \ref{cor_pavage}).

Nous étudions ensuite, lors de la section \ref{section_cellule_identite}, les classes appartenant à la cellule de Voronoï associée à l'application identité. Le théorème principal de cette partie dit qu'il suffit que les classes soient plus proches de $\l$ que de l'orbite de $\l$ sous l'action des applications de Jonquières (et non pas de toutes les applications du groupe de Cremona). 
\begin{maintheorem}
Une classe $c$ appartient à la cellule $\V(\id)$ si et seulement si pour toute application de Jonquières $\j$, \[\dist(\isome{\j}(c),\l)\geq \dist(c,\l).\]
\end{maintheorem}
En fait, le théorème \ref{prop_cellule_identite} est plus précis que cela. Il suffit de vérifier l'inégalité pour certaines applications de Jonquières.

Dans la section \ref{section_cellule_adjacente}, nous déterminons les cellules non disjointes de la cellule $\V(\id)$ (Corollaire \ref{cor_cellule_adjacente}) que nous appelons \og cellules adjacentes\fg. Les germes de telles cellules sont de deux types. Ceux qui sont de \emph{caractéristique Jonquières}, c'est-à-dire les applications du groupe de Cremona telles qu'il existe deux points $p$ et $q$ dans $\P^2$ et qui envoient le pinceau de droites passant par le point $p$ sur le pinceau de droites passant par le point $q$. L'autre type de germes des cellules adjacentes à la cellule $\V(\id)$ sont les applications du groupe de Cremona qui possèdent au plus huit points-base en position presque générale. Un ensemble de points $\{p_0,p_1,\dots, p_r\}$ est dit \emph{en position presque générale} si d'une part pour chaque $0\leq i\leq r$ le point $p_i$ vit soit dans $\P^2$ soit dans une surface qui est obtenue en éclatant seulement un sous-ensemble de points de $\{p_0,\dots,p_r\}$ et si d'autre part aucune des trois conditions suivantes n'est satisfaite : quatre des points de cet ensemble sont alignés, sept des points de cet ensemble sont sur une conique, deux des points de cet ensemble sont adhérents à un troisième point de cet ensemble.

\begin{maintheorem}\label{thm_intro_cel_adj}
L'ensemble des germes des cellules adjacentes à la cellule $\V(\id)$ est constitué de toutes :
\begin{itemize}[wide]
\item les applications de caractéristique Jonquières,
\item les applications qui possèdent au plus $8$ points-base en position presque générale.
\end{itemize} 
\end{maintheorem}
Nous déterminons également les classes qui se trouvent à l'intersection entre la cellule de Voronoï associée à $\l$ et une ou plusieurs de ses cellules adjacentes (Théorème \ref{thm_principal_germe_cellule_voisine_identite}).

Enfin dans la section \ref{section_cellule_quasiadjacente} nous étudions les cellules qui possèdent une classe en commun à l'infini avec la cellule associée à l'identité. Revenons un instant à l'action du groupe modulaire sur le demi-plan de Poincaré et considérons le pavage de Voronoï associé à l'orbite du point $2i$. Les cellules quasi-adjacentes à la cellule associée à $2i$ sont toutes les cellules obtenues comme image par le sous-groupe engendré par $z\rightarrow  z+1$ de la cellule associée à $2i$.

Le théorème principal de cette section est 
\begin{maintheorem}\label{thm_intro_cel_quasi_adj}
L'ensemble des germes des cellules quasi-adjacentes à la cellule $\V(\id)$ est constitué de toutes :
\begin{itemize}[wide]
\item les applications de caractéristique Jonquières,
\item les applications qui possèdent au plus $9$ points-base en position presque générale.
\end{itemize} 
\end{maintheorem}
De même que dans le cas précédent, nous étudions les classes à l'infini qui se trouvent dans l'intersection des bords à l'infini de la cellule associée à $\l $ et d'une ou de plusieurs de ses cellules quasi-adjacentes.
\medskip

Une motivation pour cette étude était la question de l’hyperbolicité de certains graphes associés au groupe de Cremona, en particulier un graphe dû à Wright. Par rapport à l'analogie avec le groupe modulaire, ce graphe joue le rôle de l'arbre de Bass-Serre associé à la structure de produit amalgamé de $\PSL(2,\Z)$.
Dans un article à venir (voir également \cite{Lothese}), nous ferons le lien entre le graphe de Wright et le graphe dual du pavage de Voronoï, et nous étudierons son hyperbolicité au sens de Gromov.

\section*{Remerciements}
Je remercie vivement Stéphane Lamy, mon directeur de thèse, pour sa grande disponibilité et ses relectures minutieuses.
Je remercie également les rapporteurs de ma thèse, Charles Favre et Yves de Cornulier, pour leurs remarques qui ont permis de rendre certains passages plus clairs.

\section{Préliminaires}
Dans cet article le corps de base, noté $\kk$, est algébriquement clos. Les surfaces considérées sont projectives et lisses. 

Le \emph{groupe de Cremona} $\Bir(\P^2_{\kk})$ est le groupe des applications birationnelles de $\P^2=\P^2_{\kk}$ vers lui-même. 
Un élément du groupe de Cremona s'écrit : 
\[\begin{array}{cccc}
f : & \P^2 &\dashrightarrow & \P^2\\
& [x:y:z] & \dashmapsto &[f_0(x,y,z):f_1(x,y,z):f_2(x,y,z)],
\end{array}\]
où $f_0,f_1,f_2\in\kk[x,y,z]$ sont des polynômes homogènes, de même degré et sans facteur commun. Nous appelons degré de $f$ le degré des polynômes homogènes: 
\[\deg(f):=\deg(f_i) \text{ pour } i\in\{0,1,2\}.\]
Par exemple, l'application quadratique standard 
\[\sigma : [x:y:z] \dashrightarrow [yz :xz:xy]\] est un élément du groupe de Cremona. C'est une \emph{application de Jonquières} puisqu'elle préserve le pinceau de droites passant par un point. En fait elle préserve trois pinceaux de droites, ceux passant par les points $[0:0:1]$, $[0:1:0]$ et $[1:0:0]$. 
Remarquons que l'inverse d'une application de Jonquières est encore une application de Jonquières.
Soit $S$ une surface. Une surface $S'$ domine $S$ s'il existe un morphisme birationnel allant de $S'$ vers $S$. Considérons $S_1$ et $S_2$ deux surfaces dominant $S$ et $\pi_1$ et $\pi_2$ leur morphisme respectif vers $S$. Nous disons que deux points $p_1\in S_1$ et $p_2\in S_2$ sont équivalents si $\pi_1^{-1}\circ \pi_2$ est un isomorphisme local sur un voisinage de $p_2$ et envoie $p_2$ sur $p_1$. L'\emph{espace des bulles} (\og Bubble space\fg\ en anglais), noté $\B(S)$, est l'union de tous les points de toutes les surfaces dominant $S$ modulo cette relation d'équivalence. 
Soient $S$ une surface et $p\in  S$. Tous les points appartenant au diviseur exceptionnel $E_p$ obtenu en éclatant le point $p$ sont dits \emph{ infiniment proches d'ordre un} de $p$. Un point est \emph{infiniment proche d'ordre $r\geq 2$} de $p$ si c'est un point infiniment proche d'ordre un d'un point infiniment proche d'ordre $r-1$ de $p$.

Le théorème suivant est dû à O. Zariski et permet de décomposer toute application birationnelle comme composée d'éclatements et d'inverses d'éclatements. 
\begin{thm}\label{thm_Zariski}
Soient $S_1$ et $S_2$ deux surfaces, et $f :  S_1\dashrightarrow S_2$ une application birationnelle. Alors il existe une troisième surface $S_3$ et deux composées d'éclatements $\pi : S_3\rightarrow S_1$ et $\sigma : S_3\rightarrow
 S_2$ telles que le diagramme suivant commute :  
	\begin{center}
		\begin{tikzcd}[column sep=small]
			& S_3\arrow{dl}[above left]{\pi} \arrow{dr}{\sigma} & \\
			S_1 \arrow[dashrightarrow]{rr}{f} & & S_2.
		\end{tikzcd}		
	\end{center}
\end{thm}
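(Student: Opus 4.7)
La stratégie consiste à décomposer le problème en deux étapes indépendantes correspondant à la construction des deux morphismes $\pi$ et $\sigma$. Je commencerais par construire une surface $S_3$ dominant $S_1$ par une composée d'éclatements $\pi$ telle que $f\circ\pi$ soit en fait un vrai morphisme birationnel $\sigma : S_3\to S_2$ (c'est l'\emph{élimination des indéterminations}). Puis j'établirais séparément que tout morphisme birationnel entre surfaces projectives lisses se décompose automatiquement en une composée d'éclatements de points ; appliqué à $\sigma$, ce résultat fournira immédiatement la conclusion recherchée.

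Pour la première étape, la méthode naturelle est d'éclater successivement les points d'indétermination de $f$. Si $p$ est un tel point et $\e_p : \tilde{S}_1\to S_1$ l'éclatement de $p$, l'application $f\circ\e_p$ est définie en presque tous les points du diviseur exceptionnel $\e_p^{-1}(p)$. Pour assurer la terminaison, on peut fixer un système linéaire très ample $|H|$ sur $S_2$, considérer son image réciproque par $f$ (un système linéaire sur $S_1$ avec points-base) et contrôler un invariant numérique attaché aux multiplicités en les points-base, qui décroît strictement à chaque éclatement. Une alternative plus géométrique consiste à considérer l'adhérence $\Gamma_f\subset S_1\times S_2$ du graphe de $f$, dont les singularités sont isolées et se trouvent au-dessus des points d'indétermination ; en la désingularisant on obtient directement $S_3$ munie de ses deux morphismes vers $S_1$ et $S_2$.

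La seconde étape repose sur le critère de contractibilité de Castelnuovo : toute $(-1)$-courbe sur une surface projective lisse peut être contractée en un point lisse d'une autre surface projective lisse, cette contraction étant précisément l'inverse de l'éclatement dudit point. Étant donné un morphisme birationnel non isomorphe $\sigma : S_3\to S_2$, il s'agit alors de produire une telle $(-1)$-courbe dans son lieu exceptionnel : on y parvient en appliquant la formule d'adjonction aux composantes irréductibles de ce lieu, combinée à la négativité de la forme d'intersection sur le lieu exceptionnel et à la lissité de $S_2$. C'est là, me semble-t-il, le c\oe{}ur technique de l'énoncé. Une fois une telle courbe $E$ trouvée et contractée, on obtient une factorisation $\sigma = \sigma'\circ\e$ avec $\e$ un éclatement et $\sigma'$ encore un morphisme birationnel ; l'itération du procédé termine car le rang de Picard de la surface source, entier strictement positif, décroît strictement à chaque étape.
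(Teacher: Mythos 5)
Le texte ne démontre pas ce théorème : c'est le résultat classique de Zariski, énoncé dans les préliminaires comme un fait connu, sans preuve dans l'article. Votre proposition reconstitue l'argument standard des manuels (élimination des indéterminations par éclatements successifs, puis factorisation de tout morphisme birationnel entre surfaces lisses en une composée d'éclatements via le critère de contractibilité de Castelnuovo), et cette stratégie est correcte. Deux remarques toutefois. D'une part, votre esquisse laisse en suspens précisément les deux points techniques que vous identifiez vous-même : la terminaison de la première étape (l'invariant qui décroît est l'auto-intersection de la partie mobile du système linéaire $f^*|H|$, qui chute de $m^2>0$ à chaque éclatement d'un point-base de multiplicité $m$ tout en restant positive ou nulle) et l'existence d'une $(-1)$-courbe contractible dans le lieu exceptionnel d'un morphisme birationnel non trivial (qui s'obtient en écrivant $K_{S_3}=\sigma^*K_{S_2}+\sum a_iE_i$ avec $a_i>0$ et en exploitant la négativité de la matrice d'intersection des composantes $E_i$, combinée à la formule d'adjonction) ; il faut aussi vérifier que $\sigma$ se factorise bien par la contraction, ce qui découle du fait que la courbe contractée est envoyée sur un point. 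D'autre part, l'affirmation que l'adhérence du graphe $\Gamma_f$ a des singularités isolées est imprécise telle quelle (le lieu singulier pourrait a priori être de dimension un ; c'est après normalisation qu'il devient fini) ; comme vous invoquez de toute façon une désingularisation, cela ne compromet pas l'argument, mais mérite d'être corrigé si vous retenez cette variante.
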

Les points éclatés lors de la résolution minimale de $f$ sont appelés \emph{points-base} de $f$ et vivent dans l'espace des bulles. Ils correspondent aux points-base du système linéaire associé à $f$. Cet ensemble de points est noté $\Bs(f)$. À tout point-base de $f$ est associé un entier appelé \emph{multiplicité} et qui correspond à la multiplicité du système linéaire en ce point, c'est-à-dire à la plus petite multiplicité en ce point des polynômes définissant $f$.

Le groupe de Cremona agit sur un hyperboloïde dans l'espace de Picard-Manin, noté $\H$. 
Dans ces préliminaires, nous rappelons la définition de $\H$ ainsi que l'action du groupe de Cremona sur $\H$. Nous énonçons aussi des définitions et des propriétés sur les points-base des applications du groupe de Cremona qui nous seront utiles par la suite.

\subsection{Hyperboloïde dans l'espace de Picard-Manin}

Nous rappelons ici la construction de l'espace de Picard-Manin. Nous définissons ensuite l'action du groupe $\Bir(\P^2)$ sur cet espace. Enfin, nous nous intéressons à un sous-espace de l'espace de Picard-Manin qui est un espace hyperbolique de dimension infinie. 
Plus de précisions se trouvent dans \cite[Section 4]{BC}, \cite[Part II.4]{CL}, \cite[Section 3]{C} et \cite[Section 1.2.3]{Lothese}. 

\subsubsection{Espace de Picard-Manin}	
	Soit $S$ une surface. Nous considérons le groupe de Néron-Severi associé à $S$ et tensorisé par $\R$. Nous le notons encore $\NS(S)$. C'est donc le groupe des diviseurs à coefficients réels sur $S$ à équivalence numérique près. Il est muni d'une forme bilinéaire symétrique, la forme d'intersection. Pour tout diviseur $D$ sur $S$ nous notons $\{D\}_S$ sa classe de Néron-Severi ou $\{D\}$ s'il n'y a pas d’ambiguïté sur la surface. Si $\pi : S'\longrightarrow S$ est un morphisme birationnel entre deux surfaces, alors le tiré en arrière \[\pi^* : \NS(S)\hookrightarrow \NS(S')\] qui à la classe d'un diviseur associe la classe de sa transformée totale, est un morphisme injectif qui préserve la forme d'intersection. De plus, $\NS(S')$ est isomorphe à
	\begin{equation*}
	\pi^*(\NS(S))\oplus(\underset{p\in\Bs(\pi^{-1})}{\oplus}\R\{E_p^*\}),
	\end{equation*}
	 où $\Bs(\pi^{-1})$ est l'ensemble des points-base de $\pi^{-1}$ (infiniment proches ou pas) et $E_p^*$ est la transformée totale, vue dans $S'$, du diviseur exceptionnel $E_p$ obtenu en éclatant le point $p$. Cette somme est orthogonale relativement à la forme d'intersection.

	Considérons la limite inductive des groupes de Néron-Severi des surfaces $S'$ dominant $S$ : \[\PM_C(S)=\lim\limits_{\underset{S'\rightarrow S}{\longrightarrow}}\NS(S'),\] 
	où l'indice $C$ fait référence aux b-diviseurs de Cartier (pour plus de précisions voir \cite{Fa}).
	Remarquons que pour toute surface $S'$ dominant $S$, le groupe $\NS(S')$ est plongé dans $\PM_C(S)$.
	En fait, si nous considérons un diviseur $D$ sur $S$, à chaque surface $S'$ dominant $S$, nous pouvons lui faire correspondre une classe de Néron-Severi $\{D \}_{S'}$ dans $\NS(S')$. Ces éléments sont tous identifiés dans $\PM_C(S)$ et correspondent à une classe $d$ de $\PM_C(S)$ notée en lettre minuscule.
	\begin{ex}\label{ex_clasNS}
		Considérons un point $q$ appartenant au diviseur exceptionnel $E_p$, issu de l'éclatement d'une surface $S$ au point $p$. Notons $S_p$ la surface obtenue en éclatant le point $p$ et $S_{p,q}$ celle en éclatant successivement les points $p$ et $q$. La classe $e_p$ correspond à $\{E_p\}_{S_p}$ dans $\NS(S_p)$ et à $\{\tilde{E}_p+E_q\}_{S_{p,q}}$ dans $\NS(S_{p,q})$ où $\tilde{E}_p$ est la transformée stricte de $E_p$ dans $S_{p,q}$.
	\end{ex}
	 Définissons la forme d'intersection sur $\PM_C(S)$. Pour cela, considérons $c$ et $d$ deux éléments de $\PM_C(S)$. Il existe une surface $S_1$ dominant $S$ telles que les classes $c$ et $d$ correspondent respectivement à $\{C\}_{S_1}$ et $\{D\}_{S_1}$ dans $\NS(S_1)$. La forme d'intersection est donnée par : $c \cdot d=\{C\}_{S_1}\cdot \{D\}_{S_1}$. Elle ne dépend pas du choix de la surface $S_1$.

	Par la suite, nous nous intéressons à l'espace de Hilbert défini par  \[\PM(S)=\{\{D_0\}_{S}+\sum\limits_{p\in\B(S)}\lambda_pe_p\mid \lambda_p\in\R,\ \sum\limits_{p\in\B(S)}\lambda_p^2<\infty \text{ et }\{D_0\}_{S} \in\NS(S)\},\]
	que nous appelons l'\emph{espace de Picard-Manin} (voir \cite{CL} et \cite{C} ou encore \cite{BFJ}). C'est le complété $L^2$ de $\PM_C(S)$. Ses éléments sont appelés \og classes de Picard-Manin\fg \ ou plus simplement \og classes\fg. Les classes $e_p$ (nous gardons les notations introduites dans l'exemple \ref{ex_clasNS}) où $p$ est un point de $S$ ou d'une surface dominant $S$, sont d'auto-intersection $-1$, orthogonales deux à deux et orthogonales à $\NS(S)$. La forme d'intersection est donc de signature $(1,\infty)$ et préserve la décomposition orthogonale :
	\begin{equation*}
	\PM(S)=\NS(S)\oplus \big(\underset{p\in\B(S)}{\bigoplus}\R e_p\big).
	\end{equation*}

 	Tout  morphisme birationnel $\pi:S'\rightarrow S$ induit un isomorphisme $\isome{\pi}$ de $\PM(S')$ vers $\PM(S)$ qui consiste à considérer comme exceptionnelles au-dessus de $S$ les classes $e_q$ qui étaient dans l'espace de Néron-Severi de $S'$ : 
 	 \[\begin{array}{rrcc}
 		\isome{\pi}^{-1} :& \PM(S)=\NS(S)\oplus (\underset{p\in\B(S)}{\oplus}\R e_p) & \longrightarrow & \PM(S')=\NS(S')\oplus (\underset{\substack{r\in\B(S)\\r\notin\Bs(\pi^{-1})}}{\oplus}\R e_r)\\
 	&	\{D_0\}_{S}+\sum\limits_{p\in\B(S)}\lambda_pe_p  & \mapsto &  \left(\{\tilde{D_0}\}_{S'}+\sum\limits_{q\in\Bs(\pi^{-1})}(m_q(D_0)+\lambda_q)e_q\right)\\
 	&	& & \hspace{4cm}+\sum\limits_{\substack{r\in\B(S)\\r\notin\Bs(\pi^{-1})}}\lambda_re_r,
 		\end{array}\] 
 		où $\tilde{D}_0$ est la transformée stricte dans $S'$ de $D_0$ et $m_q(D_0)$ la multiplicité de $D_0$ au point $q$.
 	
Dans le cas où la surface considérée est $\P^2$, nous notons simplement $\PM$ l'espace de Picard-Manin associé : 

\[\PM=\{n\l+\sum\limits_{p\in\B(\P^2)}\lambda_pe_p\mid n,\lambda_p\in\R,\ \sum\limits_{p\in\B(\P^2)}\lambda_p^2<\infty \},\] où $\l$ est la classe de la droite dans $\P^2$.	

\subsubsection{Forme canonique}
Notons $\PM_{L^1}$ l'ensemble des classes de Picard-Manin qui sont $L^1$ :
\[\PM_{L^1}=\{c=n\l+\sum\limits_{p\in\B(\P^2)}\lambda_pe_p\in\PM \mid \sum\limits_{p\in\B(\P^2)}\lvert\lambda_p\rvert<\infty\}.\] 
Nous définissons la forme canonique $\fcan$ sur $\PM_{L^1}$, comme la forme linéaire définie par :
\[\text{ pour tout } c\in \PM_{L^1},\   \fcan(c)=\fcan\cdot c= -3n-\sum\limits_{p\in\B(\P^2)}\lambda_p,\]
où $\fcan=-3\l+\sum\limits_{p\in\B(\P^2)}e_p$. Plus généralement, en notant $\can_S$ le diviseur canonique d'une surface $S$ dominant $\P^2$, la forme canonique s'écrit $\kk_S=\can_S+\sum\limits_{p\in\B(S)}e_p$.

	\subsubsection{Action du groupe de Cremona sur l'espace de Picard-Manin}	
Considérons une résolution de $f\in\Bir(\P^2)$ :
\begin{center}
	\begin{tikzcd}[column sep=small]
		& S\arrow{dl}[above left]{\pi} \arrow{dr}{\sigma} & \\
		\P^2 \arrow[dashrightarrow]{rr}{f} & & \P^2.
	\end{tikzcd}		
\end{center}
Le groupe $\Bir(\P^2)$ agit sur $\PM(\P^2)$ via l'application $(f,c)\mapsto \isome{f}(c)$ où $\isome{f}$ est définie par \[\isome{f}=\isome{\sigma}\circ (\isome{\pi})^{-1}.\] Remarquons que $(\isome{f})^{-1}=\isome{(f^{-1})}$ et que l'action de $f$ préserve la forme d'intersection : 
\[\text{pour tous }c_1,c_2\in\PM,\ \isome{f}(c_1)\cdot \isome{f}(c_2)=c_1\cdot c_2.  \]

\begin{rmq}\label{rmq_canonique_stable_orbite}
La forme canonique est constante sur l'orbite d'une classe par le groupe de Cremona, c'est-à-dire pour toute classe $c\in\PM_{L^1}$ et pour tout $f\in\Bir(\P^2)$, nous avons :
\[\fcan\cdot c=\fcan\cdot\isome{f}(c).\]
\end{rmq}

Soit $d$ le degré de $f$ et notons $p_0,p_1,\dots, p_{r-1}$ ses points-base de multiplicité respective $\{m_i\}_{0\leq i\leq r-1}$ et $q_0,q_1,\dots q_{r-1}$ ceux de $f^{-1}$ de multiplicité $\{m_i'\}_{0\leq i\leq r-1}$.
L'action de $f$ sur $\l$ et sur les classes $\{e_{p_j}\}_{0\leq j\leq r-1}$ est donnée par : \begin{align}
\isome{f}(\l) & =d\l-\sum\limits_{i=0}^{r-1}m_i'e_{q_i},\label{action_f_sur_l}\\
\isome{f}(e_{p_j}) & =m_j\l-\sum\limits_{i=0}^{r-1}a_{i,j}e_{q_i}.\label{action_f_sur_pt_base}
\end{align}
Remarquons que les coefficients $a_{i,j}$ correspondent au nombre d'intersection des transformées totales des diviseurs exceptionnels obtenus en éclatant respectivement les points $p_j$ et $q_i$, dans la résolution de $f$. 

Par exemple, soit $\j$ une application de Jonquières de degré $d>1$. Notons $p_0$ et $q_0$ les points-base maximaux respectifs de $\j$ et $\j^{-1}$ et $p_1,\dots,p_{2d-2}$ et $q_1,\dots,q_{2d-2}$ les petits points-base respectifs de $\j$ et $\j^{-1}$, nous avons : 
	\begin{equation}\label{lemme_action_jonq_l}
	\begin{cases}
		\isome{\j}(\l)=d\ell-(d-1)e_{q_0}-\sum\limits_{i=1}^{2d-2}e_{q_i}\\
		\isome{\j}(e_{p_0})=(d-1)\ell-(d-2)e_{q_0}-\sum\limits_{i=1}^{2d-2}e_{q_i}\\
		\isome{\j}(e_{p_i})=\l-e_{q_0}-e_{q_i} \text{ pour } 1\leq i\leq 2d-2\\
	\end{cases}.
	\end{equation}

\begin{rmq}\label{rmq_aij_positifs}
D'après \cite[Proposition 2.2.21]{AC}, les coefficients $a_{i,j}$ sont positifs pour tous $0\leq i,j\leq r-1$.
\end{rmq}

\begin{rmq}\label{rmq_systeme_lineaire_action_sur_l_pt_base}
L'action de $f$ sur $\l$ correspond au système linéaire associé à la transformée par $f^{-1}$ d'une droite ne passant pas par les points-base de $f$. Si tous les points-base de $f$ sont dans $\P^2$ alors pour chaque point $p_j$ il existe une courbe contractée par $f^{-1}$ sur le point $p_j$ de degré $m_j$ et passant avec multiplicités $a_{i,j}$ aux points $q_j$.  
\end{rmq}
Ces informations sur $f$ se lisent dans sa \emph{matrice caractéristique} : 
\[\begin{pmatrix}
d & m_0 & m_1 & \dots & m_{r-1}\\
-m_0' & - a_{0,0}& - a_{0,1} & \ldots & - a_{0,r-1}\\
-m_1' & - a_{1,0}& - a_{1,1} & \ldots & - a_{1,r-1}\\
\vdots &\vdots & \vdots & \ddots &\vdots \\
-m_{r_1}' & - a_{r-1,0}& - a_{r-1,1} & \ldots & - a_{r-1,r-1}
\end{pmatrix}.\]

Celles pour $f^{-1}$ se lisent en ligne en changeant le signe des $m_i$ et des $m_i'$ : 
\begin{align*}
\isome{f}^{-1}(\l)& =d\l-\sum\limits_{j=0}^{r-1}m_je_{p_j},\\
\isome{f}^{-1}(e_{q_i}) & =m_i'\l-\sum\limits_{j=0}^{r-1}a_{i,j}e_{p_j}.
\end{align*}

Pour plus de détails sur la matrice caractéristique, nous renvoyons à \cite[Section 2.4]{AC}. La première ligne de la matrice caractéristique de $f$ s'appelle la \emph{caractéristique} de $f$. Elle est notée $(d;m_0,\dots,m_{r-1})$.

Il ne nous reste plus qu'à regarder l'action de $f$ sur les classes $e_p$ où $p\in\B(\P^2)$ n'est pas un point-base de $f$.
\begin{rmq}\label{rmq_action_cremona_sur_classe_exceptionnelles}
 Si $f$ est un isomorphisme d'un voisinage $U$ de $p\in\P^2$ sur un voisinage $V$ de $f(p)\in\P^2$ alors \[\isome{f}(e_p)=e_{f(p)}.\]
 Sinon, quitte à éclater des points-base de $f$ et des points-base de $f^{-1}$, il existe un point $q$ sur une surface dominant $\P^2$ tel que l'application induite par $f$ envoie $p$ sur $q$ et est un isomorphisme local entre des voisinages de $p$ et de $q$ et donc 
	\[\isome{f}(e_p)=e_q.\] 
\end{rmq}

Considérons une classe $c$ de l'espace de Picard-Manin $\PM$ : \[c=n\l-\sum\limits_{i=0}^{r-1}\lambda_ie_{p_i}-\sum\limits_{\substack{p\in\B(\P^2)\\p\notin\supp(f)}}\lambda_pe_{p}.\] L'action de $f$ sur une classe $c$ de l'espace de Picard-Manin s'obtient par linéarité.
\begin{align}
\isome{f}(c)&=n\left(d\l-\sum\limits_{i=0}^{r-1}m_i'e_{q_i}\right)-\sum\limits_{j=0}^{r-1}\lambda_j\left(m_j\l-\sum\limits_{i=0}^{r-1}a_{i,j}e_{q_i}\right)-\sum\limits_{\substack{p\in\B(\P^2)\\p\notin\supp(f)}}\lambda_p\isome{f}(e_{p})\nonumber \\
& = \left(nd-\sum\limits_{j=0}^{r-1}\lambda_jm_j\right)\l-\sum\limits_{i=0}^{r-1}\left(nm_i'-\sum\limits_{j=0}^{r-1}\lambda_ja_{i,j}\right)e_{q_i}-\sum\limits_{\substack{p\in\B(\P^2)\\p\notin\supp(f)}}\lambda_p\isome{f}(e_{p}). \label{eq_action_f_sur_c}
\end{align}

	\subsubsection{Espace hyperbolique de dimension infinie dans l'espace de Picard-Manin}\label{sous-section_hyperobloide_PM}
	
	À présent, considérons l'espace \[\H(S)=\{c\in \PM(S)\mid c\cdot c=1 \text{ et } c\cdot d_0>0\},\] où $d_0\in\NS(S)$ est une classe ample. Muni de la distance définie par $\dist(c,c')=\argcosh(c\cdot c')$ pour tous $c,c'\in \H(S)$, c'est un espace hyperbolique de dimension infinie. Nous nous intéressons plus particulièrement à $\H(\P^2)$ que nous notons $\H$. Tout élément de $\H$ est de la forme \[n\ell +\sum_{p\in\B(\P^2)}\lambda_pe_p \text{ où } n>0 \text{ et } n^2-\sum_{p\in\B(\P^2)}\lambda_p^2=1.\]
	Comme le groupe de Cremona agit sur l'espace de Picard-Manin et que l'action préserve la forme d'intersection, pour montrer que le groupe de Cremona agit sur $\H$, il suffit de montrer que $\isome{f}(c)\cdot \l>0$. Comme $\isome{f}^{-1}(\l)\in\H$ et que le nombre d'intersection entre deux classes est supérieur ou égal à $1$, nous avons comme attendu : \[\isome{f}(c)\cdot \l=c\cdot \isome{f}^{-1}(\l)\geq 1.\]

\subsection{Propriétés des applications du groupe de Cremona}
Dans cette sous-section, nous nous intéressons aux propriétés que vérifient les points-base et les multiplicités des applications du groupe de Cremona. Pour cela, nous introduisons dans un premier temps un vocabulaire général utile pour les points-base d'une application, mais pas seulement. Enfin, nous nous concentrons sur les applications de Jonquières. 
\subsubsection{Vocabulaire}\label{sous-section_voc} 
Soient $\pi : S'\rightarrow S$ une composée d'éclatements, avec $p$ l'un des points éclatés dans cette suite, et $E_p$ la transformée stricte sur $S'$ du diviseur exceptionnel obtenu en éclatant le point $p$. Tout point $q$ de $S'$ et appartenant à $E_p$ est dit \emph{adhérent} à $p$. Nous notons cette relation $q \rightarrow p$. Plus généralement, tout point $q$ de $S'$ et appartenant à la transformée totale $\pi^{*}(E_p)$ de $E_p$, est dit \textit{voisin} de $p$.
Considérons un ensemble $P$ de points de $\B(\P^2)$. Nous disons qu'il est \emph{pré-consistant} si tous ses points vivent sur une surface obtenue en éclatant uniquement des points de l'ensemble $P$ à partir de $\P^2$ (voir l'exemple \ref{ex_non_consistant} ci-dessous pour un exemple d'ensemble qui n'est pas pré-consistant). Tout point de $P$ qui est adhérent à un seul point de $P$ est dit \textit{libre} contrairement à un point adhérent à deux points distincts de $P$ qui est appelé \textit{satellite}. Ces deux dernières définitions sont relatives à la donnée d'un ensemble pré-consistant de points. Remarquons qu'un point peut être adhérent à au plus deux points. Un point adhérent et libre est habituellement appelé voisin du premier ordre, mais nous n'utilisons pas cette terminologie. Nous disons qu'une suite pré-consistante de points forme une \emph{tour} si chaque point de la suite est adhérent au point précédent.

\begin{ex}\label{ex_def_pt_adh_voisin} Soit $p_0$ un point de $\P^2$. Éclatons-le. Notons $p_1$ un point sur le diviseur exceptionnel $E_{p_0}$ obtenu en éclatant le point $p_0$. Éclatons le point $p_1$. Considérons deux points distincts $p_2$ et $p_3$ sur le diviseur exceptionnel $E_{p_1}$ tel que $p_2$ appartienne également à la transformée stricte de $E_{p_0}$. 
	\begin{center}
		\scalebox{0.7}{
			\begin{tikzpicture}
			\def\ymin{0}	\def\ymax{3}
			\def\xmin{-1}	\def\xmax{2.5}
			
			%% Axes
			\draw (\xmin, 2) -- (\xmax, 2) node[right] {$E_{p_1}$};
			\draw (0, \ymin) -- (0, \ymax) node[above] {$\overline{\pi_{p_1}^{-1}(E_{p_0}\setminus\{p_1\})}$};
			\draw (0,2) node {$\bullet$} node[above right] {$p_2$};
			\draw (2,2) node {$\bullet$} node[above] {$p_3$};
			
			\draw (-4, \ymin) -- (-4, \ymax) node[above] {$E_{p_0}$};
			\draw (-4,2) node {$\bullet$} node[above right] {$p_1$};
			\draw[thick, <-] (-3,2) -- (-1.5,2) node[below, midway] {$\ \pi_{p_1}$};
			\draw[thick, <-] (-6.5,2) -- (-5,2) node[below, midway] {$\ \pi_{p_0}$};
			\draw (-7.5,2) node {$\bullet$} node[above] {$p_0$};
			\end{tikzpicture}}
	\end{center}
L'ensemble des points $\{p_0,p_1,p_2,p_3\}$ est pré-consistant. Les points $p_1$, $p_2$ et $p_3$ sont voisins de $p_0$. Le point $p_1$ est un point libre adhérent à $p_0$ alors que le point $p_2$ est satellite et adhérent aux points $p_0$ et $p_1$. Le point $p_3$ est libre non adhérent à $p_0$ et il est adhérent à $p_1$.

Pour plus de facilité, nous représentons ce genre de situation par un graphe où les sommets sont les points. Et il y a une flèche orientée entre deux sommets si celui du dessus est adhérent à celui du dessous. Ainsi la situation précédente se réécrit :

\begin{center}
\begin{tikzpicture}
\coordinate (p0) at (0,0);
\coordinate (p1) at (0,1);
\coordinate (p2) at (0,2);
\coordinate (p3) at (1,2);
\fill (p0) circle (2pt);
\fill (p1) circle (2pt);
\fill (p2) circle (2pt);
\fill (p3) circle (2pt);
\node at (p0) [right] {$p_0$};
\node at (p1) [below right] {$p_1$};
\node at (p2) [left] {$p_2$};
\node at (p3) [right] {$p_3$};
\draw [directed](p1) to (p0);
\draw [directed](p2) to (p1);
\draw [directed](p3) to (p1);
\draw[directed] (p2) to[bend right=30] (p0);
\end{tikzpicture}
\end{center}
\end{ex}

Trois points ou plus sont dits \emph{alignés} s'ils appartiennent à une droite de $\P^2$ ou à la transformée stricte d'une droite de $\P^2$.

Une \emph{suite pondérée} est la donnée de points de $\B(\P^2)$ où chacun est muni d'une multiplicité réelle positive. Soit $p$ un point d'une suite pondérée $P$. La différence entre la multiplicité $m_p$, associée au point $p$, et la somme des multiplicités des points de $P$ adhérents à $p$ est appelée \emph{l'excès} du point $p$ : \[m_p-\sum\limits_{\substack{q\in P\\q\rightarrow p}}m_q.\] Une suite pondérée est dite \emph{consistante} si l'excès en tout point de cet ensemble est positif. Remarquons qu'une suite de points est pré-consistante s'il est possible de pondérer tous les points de cet ensemble de façon strictement positive de sorte que si nous associons à tout autre point de $\B(\P^2)$ une multiplicité nulle alors l'excès en tous les points de $\B(\P^2)$ est positif ou nul. 
\begin{ex}\label{ex_non_consistant}
Si nous reprenons l'exemple précédent \ref{ex_def_pt_adh_voisin}, l'ensemble des points $\{p_0,p_1,p_2,p_3\}$ est pré-consistant car ils peuvent être respectivement pondérés par $3,2,1,1$ et alors tous les points de $\B(\P^2)$ ont un excès positif ou nul. Par contre les points $p_0,p_2,p_3$ ne forment pas un ensemble de points pré-consistant. En effet, quelque soit le choix de multiplicité associé aux points $p_2$ et $p_3$ l'excès au point $p_1$ est toujours strictement négatif. Géométriquement, cela correspond au fait que toute surface dominant $\P^2$ et contenant les points $p_2$ et $p_3$ a été obtenue par une suite d'éclatements dont l'un est l'éclatement du point $p_1$, qui n'appartient pas à l'ensemble ${p_0,p_2,p_3}$.
\end{ex} 

\subsubsection{Propriété des points-base et de leur multiplicité}
Cette partie regroupe plusieurs résultats de base qui seront utilisés par la suite. Une application du groupe de Cremona est dite de caractéristique Jonquières si sa caractéristique est la même que celle d'une application de Jonquières à savoir : $(d;d-1,1^{2d-2})$ où $d$ est le degré de l'application. En fait ce sont des applications qui envoient un pinceau de droites sur un autre pinceau de droites.
Les relations suivantes sont obtenues à partir de simples calculs d'intersection. La plupart d'entre elles sont connues et peuvent par exemple se trouver dans \cite{AC}.

\begin{lemme}\label{lemme_equations_evidentes} Soit $f\in\Bir(\P^2)$. Notons $(d;m_0,\dots, m_{r-1})$ la caractéristique de $f$, $(d;m_0',\dots, m_{r-1}')$ celle de $f^{-1}$ et $(a_{i,j})_{0\leq i,j\leq r-1}$ les coefficients de la sous-matrice de la matrice caractéristique ne contenant pas la première ligne et la première colonne. Nous avons les relations :
\begin{enumerate}
\item \label{eq_Noethercanonique} $\sum\limits_{i=0}^{r-1}m_i=3d-3$.
\item\label{eq_dcarre_micarre} $\sum\limits_{i=0}^{r-1}m_i^2=d^2-1$. 
\item\label{eq_mi_pluspetit_d}  Pour tout $0\leq i\leq r-1$, $m_i\leq d-1$.
\item\label{eq_m_0_jonquieres} Si $m_0=d-1$ alors $f$ est une application de caractéristique Jonquières. 
\item\label{eq_aij} $\sum\limits_{i=0}^{r-1}a_{i,j}=3m_j-1.$ 
\item\label{eq_mjcarre} $ \sum\limits_{i=0}^{r-1}a_{i,j}^2=m_j^2+1$.
\item \label{eq_produitmimj}$ \sum\limits_{i=0}^{r-1}a_{i,j}a_{i,k}=m_jm_k $.
\item\label{eq_produitdmj} $\sum\limits_{i=0}^{r-1}m_i'a_{i,j}=dm_j$. 
\end{enumerate}
\end{lemme}
    
\begin{rmq*}
Les équations \ref{eq_Noethercanonique} et \ref{eq_dcarre_micarre} sont souvent appelées \og équations de Noether\fg. Une façon d'énoncer la seconde égalité est de dire que le système linéaire associé à $f$ est homaloïdal, c'est-à-dire que deux courbes de ce système ont un unique point d'intersection hors des points-base.
\end{rmq*}

\begin{proof}
Les égalités \ref{eq_Noethercanonique} et \ref{eq_aij} s'obtiennent respectivement en intersectant la forme canonique contre $\isome{f}^{-1}(\l)$ et $\l$ puis contre $\isome{f}(e_{p_j})$ et $e_{p_j}$ et en utilisant la remarque \ref{rmq_canonique_stable_orbite}.
Auto-intersecter $\isome{f}^{-1}(\l)$ et $\isome{f}(e_{p_j})$, puis intersecter $\isome{f}(e_{p_j})$ contre $\isome{f}(e_{p_k})$ et $\isome{f}(e_{p_j})$ contre $\isome{f}(\l)$ nous donne respectivement \ref{eq_dcarre_micarre}, \ref{eq_mjcarre}, \ref{eq_produitmimj} et \ref{eq_produitdmj}. Le point \ref{eq_mi_pluspetit_d} découle du fait que la pré-image par $f$ d'une droite générale est irréductible.
Étudions le point \ref{eq_m_0_jonquieres}. Si $m_0=d-1$ alors en utilisant les égalités \ref{eq_Noethercanonique} et \ref{eq_dcarre_micarre} nous obtenons que \[\sum\limits_{i=1}^{r-1}m_i(m_i-1)=0,\] par conséquent, les multiplicités $m_i$ pour $1\leq i\leq r-1$ sont toutes égales à $1$. En utilisant à nouveau le point \ref{eq_Noethercanonique}, nous obtenons que $r=2d-1$ ce qui permet de conclure que $f$ est de caractéristique Jonquières. 
\end{proof}

La preuve du lemme suivant est une application directe des relations de Noether vues dans le lemme \ref{lemme_equations_evidentes}.
\begin{lemme}\label{lemme_nbr_pt_base_degre} Soit $f\in\Bir(\P^2)$ de degré $d$ et admettant $r$ points-base.
    	\begin{enumerate}
    		\item\label{lemme_nbre_pt_base_degre4} Si $d\geq 4$ alors $r\geq 6$.
    		\item\label{lemme_nbre_pt_base8_degre} Si $r\leq 8$ alors $d\leq17$.
    	\end{enumerate}
    \end{lemme}

    \begin{proof}
    	Soient $\{m_i\}_{0\leq i\leq r-1}$ les multiplicités des points-base de $f$. L'inégalité de Cauchy-Schwarz donne :\[\bigg(\sum\limits_{i=0}^{r-1}m_i\bigg)^2\leq r\sum\limits_{i=0}^{r-1}m_i^2 .\] Ainsi d'après les équations de Noether \ref{eq_Noethercanonique} et \ref{eq_dcarre_micarre} du lemme \ref{lemme_equations_evidentes}, nous obtenons : 
    	\[9(d-1)^2\leq r(d^2-1),\]
    	qui implique en simplifiant par le facteur $d-1$	
    	\[9(d-1)\leq r(d+1).\]
    	Comme la suite $u_d= \frac{9(d-1)}{d+1}$ est croissante, si $d\geq4$ alors \[5<u_4\leq u_d\leq r\] et $f$ a donc au moins six points-base. 
    	
    	Si $r\leq 8$ alors $9(d-1)\leq 8d+8$ qui nous donne $d\leq 17$ comme attendu.
    \end{proof}

Une application $f\in \Bir(\P^2)$ est dite \emph{symétrique} si elle est de degré strictement supérieur à $1$ et si toutes ses multiplicités sont égales. 

\begin{lemme}[{\cite[Lemma 2.5.5]{AC}}]\label{lemme_appli_sym}
La caractéristique d'une application symétrique $f\in \Bir(\P^2)$ est forcément de l'une des formes suivantes :
$(2;1^3)$, $(5;2^6)$, $(8;3^7)$ ou $(17;6^8)$. En particulier, la multiplicité $m$ des points-base de $f$ est liée au degré $d$ par la relation \[m=\frac{d+1}{3}.\]
\end{lemme}

\begin{rmq}
Soit $f\in\Bir(\P^2)$ de degré $d$.
La somme des trois plus grandes multiplicités de ses points-base est supérieure ou égale à $d+1$. En fait, il y a égalité si et seulement si $f$ est une application symétrique ou de caractéristique Jonquières. Cela s'obtient en utilisant uniquement les égalités \ref{eq_Noethercanonique} et \ref{eq_dcarre_micarre} du lemme \ref{lemme_equations_evidentes}.
\end{rmq}

\begin{prop}[Positivité des excès]\label{prop_consistence}
Soit $f\in\Bir(\P^2)$ les excès des points-base de $f$ sont tous positifs : 
\[\text{ pour tout } p\in\Bs(f),\  m_p\geq\sum\limits_{\substack{q\in\Bs(f)\\q\rightarrow p}}m_q.\]
\end{prop}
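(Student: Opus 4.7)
Le plan est d'exprimer la quantité $m_p-\sum_{q\to p}m_q$ comme un nombre d'intersection, positif par construction, entre deux courbes irréductibles distinctes d'une surface convenable, en suivant l'effet des éclatements successifs sur la transformée stricte d'une courbe générale du système linéaire de $f$.

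Plus précisément, je partirais d'un membre général $C$ du système linéaire associé à $f$. Comme ce système est homaloïdal (voir la remarque suivant le lemme \ref{lemme_equations_evidentes}), la courbe $C$ est irréductible de degré $d=\deg(f)$. Je noterais $\pi\colon S\to\P^2$ la résolution minimale de $f$, $\widetilde{C}\subset S$ la transformée stricte de $C$, et, pour tout $r\in\Bs(f)$, $E_r\subset S$ la transformée stricte du diviseur exceptionnel issu de l'éclatement de $r$.

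Je fixerais ensuite $p\in\Bs(f)$ et suivrais l'évolution du nombre d'intersection $\widetilde{C}\cdot E_p$ au cours du processus de résolution. Sur la surface $S_p$ obtenue en éclatant uniquement $p$ et ses ancêtres, la relation $\pi_p^*C=\widetilde{C}_p+m_pE_p$ (où $\widetilde{C}_p$ est la transformée stricte de $C$ dans $S_p$) jointe à $E_p^2=-1$ fournit $\widetilde{C}_p\cdot E_p=m_p$. Je poursuivrais alors la résolution point-base par point-base : un éclatement en $r\in\Bs(f)$ laisse $\widetilde{C}\cdot E_p$ inchangé si $r$ n'appartient pas à la transformée stricte courante de $E_p$ ; dans le cas contraire on a exactement $r\to p$, et puisque $E_p$ demeure lisse à chaque étape (un éclatement d'un point d'une courbe lisse préserve la lissité) et que la transformée courante de $\widetilde{C}$ possède multiplicité $m_r$ en $r$, l'intersection décroît précisément de $m_r$. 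Au terme du processus, j'obtiendrais l'identité
\[
\widetilde{C}\cdot E_p\;=\;m_p-\sum_{\substack{q\in\Bs(f)\\ q\to p}}m_q
\]
dans $S$. Comme $C$ est irréductible et non contractée par $\pi$, la transformée $\widetilde{C}$ reste irréductible et distincte de $E_p$, donc $\widetilde{C}\cdot E_p\geq 0$, ce qui donne l'inégalité voulue.

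Le point délicat à soigner est l'équivalence entre la condition géométrique « $r$ appartient à la transformée stricte courante de $E_p$ au moment où on l'éclate » et la relation $r\to p$ telle qu'elle est définie dans la sous-section \ref{sous-section_voc} ; une fois ce point clarifié, les calculs de variation d'intersection à chaque éclatement sont une conséquence directe des règles usuelles de transformation des multiplicités sous un éclatement en un point lisse.
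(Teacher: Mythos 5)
Votre démonstration est correcte. Elle suit la même réduction que celle du texte : les deux arguments partent d'un membre général $C$ du système homaloïdal associé à $f$ (dont l'irréductibilité vient de ce que c'est la préimage d'une droite générale par l'application birationnelle $f$) et ramènent tout à l'inégalité $m_p(C)\geq \sum_{q\rightarrow p}m_q(C)$. La différence est que le texte se contente de citer cette inégalité (Shafarevich, p.~252), tandis que vous la démontrez en identifiant l'excès au nombre d'intersection $\widetilde{C}\cdot E_p$ sur la résolution, positif puisque $\widetilde{C}$ et $E_p$ sont deux courbes irréductibles distinctes ($E_p$ est contractée par $\pi$ alors que $\widetilde{C}$ domine $C$). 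Le calcul de la variation de ce nombre d'intersection à chaque éclatement est exact : l'éclatement d'un point $r$ situé sur la transformée stricte courante de $E_p$ (ce qui est précisément la relation $r\rightarrow p$ de la sous-section \ref{sous-section_voc}) fait chuter l'intersection de $m_r(C)\cdot m_r(E_p)=m_r$ puisque $E_p$ reste lisse. Seule petite imprécision : l'écriture $\pi_p^*C=\widetilde{C}_p+m_pE_p$ n'est littérale que lorsque $p\in\P^2$ ; pour un point infiniment proche il faut la lire localement, au-dessus du point $p$, sur la surface intermédiaire, mais la conclusion $\widetilde{C}_p\cdot E_p=m_p$ subsiste. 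Votre version a l'avantage d'être autonome ; celle du texte est plus courte au prix d'une référence externe.
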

\begin{proof}
Nous avons vu que la multiplicité $m_p$ d'un point-base $p$ de $f$ correspond à la multiplicité au point $p$ des courbes du système linéaire associé à $f$. Considérons une courbe $C$ générale de ce système linéaire.
D'après \cite[p.252]{Sha}, la multiplicité de $C$ au point $p$, notée $m_p(C)$ est supérieure ou égale à la somme des multiplicités de cette courbe aux points adhérents à $p$. Par conséquent, nous avons l'égalité attendue :
\[m_p=m_p(C)\geq \sum\limits_{\substack{q\in\B(\P^2)\\q\rightarrow p}}m_q(C)\geq \sum\limits_{\substack{q\in\Bs(f)\\q\rightarrow p}}m_q.\qedhere\]
\end{proof}

\begin{prop}[Bézout]\label{prop_Bezout} Soit $f\in\Bir(\P^2)$ de degré $d$ dont les points-base sont les points $\{p_i\}_{0\leq i\leq r-1}$ de multiplicité respective $\{m_i\}_{0\leq i\leq r-1}$.
Pour toute courbe $C'$ de degré $d'$ passant par les points $p_i$ avec multiplicité $\mu_i$ nous avons :
\[dd'-\sum\limits_{i=0}^{r-1}m_i\mu_i\geq 0.\]
\end{prop}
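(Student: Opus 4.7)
L'approche est de comparer la courbe $C'$ avec un membre générique du système linéaire associé à $f$ sur une résolution commune, puis d'exploiter la positivité du produit d'intersection entre deux diviseurs effectifs n'ayant aucune composante commune. Comme $f$ est birationnelle, son système linéaire $\Lambda_f$ est mobile (sans composante fixe) et homaloïdal au sens de la remarque suivant le lemme \ref{lemme_equations_evidentes}. Un membre général $C\in\Lambda_f$ est donc une courbe irréductible de degré $d$ passant par chaque point-base $p_i$ avec multiplicité exactement $m_i$, car la pré-image par $f^{-1}$ d'une droite générique de $\P^2$ est irréductible ($f$ étant birationnelle, $f^{-1}$ est définie sur un ouvert dense).

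Je commencerais alors par fixer, via le théorème \ref{thm_Zariski}, une surface $S$ dominant $\P^2$ par un morphisme $\pi : S\rightarrow \P^2$ résolvant simultanément les points-base de $f$ et tous les points (éventuellement infiniment proches) par lesquels passe $C'$ avec une multiplicité non nulle. Sur $S$, les transformées strictes $\tilde C$ et $\tilde C'$ sont des diviseurs effectifs dont les classes dans $\PM$ s'écrivent respectivement
\[[\tilde C] = d\l - \sum_{i=0}^{r-1} m_i\, e_{p_i} \quad \text{et} \quad [\tilde C'] = d'\l - \sum_{i=0}^{r-1}\mu_i\, e_{p_i} - \sum_{q\in Q}\nu_q\, e_q,\]
où $Q$ désigne l'ensemble des éventuels points supplémentaires par lesquels passe $C'$ mais qui ne figurent pas parmi les $p_i$. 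La première expression résulte de l'équation \eqref{action_f_sur_l} appliquée à $f^{-1}$.

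L'étape clé consiste à observer que $C$ et $C'$ n'ont aucune composante irréductible en commun : en effet, $C$ étant un membre générique d'un système linéaire mobile de dimension deux et $C'$ étant fixée, un choix générique de $C$ évite les composantes irréductibles de $C'$. Par conséquent, $\tilde C$ et $\tilde C'$ sont deux diviseurs effectifs sur la surface projective lisse $S$ sans composante commune, ce qui donne la positivité $\tilde C\cdot \tilde C'\geq 0$.

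Il ne reste plus qu'à calculer cette intersection à l'aide des relations d'orthogonalité $e_p\cdot e_{p'} = -\delta_{p,p'}$, $e_p\cdot\l = 0$ et $\l\cdot\l = 1$, ce qui donne immédiatement
\[\tilde C\cdot\tilde C' = dd' - \sum_{i=0}^{r-1}m_i\mu_i,\]
et l'inégalité annoncée en découle. Le point essentiel à justifier soigneusement est la non-existence de composante commune entre $C$ et $C'$, conséquence du caractère mobile et irréductible d'un membre général du système linéaire de $f$ ; le reste est un calcul formel d'intersection dans l'espace de Picard-Manin.
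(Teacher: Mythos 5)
Your argument is correct: taking a general member $C$ of the homaloidal net of $f$, passing to a common resolution, and using that two effective divisors without common component on a smooth projective surface have non-negative intersection yields exactly $dd'-\sum_{i=0}^{r-1} m_i\mu_i\geq 0$ after the (routine) Picard--Manin computation. The paper states this proposition without proof, treating it as the classical Bézout-type inequality, and your write-up supplies the standard justification, correctly handling the two points that actually require care: a general member of the system has multiplicity exactly $m_i$ at each $p_i$ and avoids every other fixed point of the resolution, and it shares no irreducible component with the fixed curve $C'$ because the system is mobile.
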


\begin{rmq}\label{rmq_pas_deuxpoints_adherents_petit_jonquieres}
Une application de caractéristique Jonquières de degré $d$ et de point-base de multiplicité maximale $p_0$, ne possède pas deux points-base adhérents à un même troisième point-base différent de $p_0$ sinon cela contredirait la positivité des excès (Proposition \ref{prop_consistence}) en ce point puisque sa caractéristique est $(d;d-1,1^{2d-2})$. De plus, par le même argument, il y a au plus $d-1$ points adhérents au point de multiplicité maximale. 
Par la proposition \ref{prop_Bezout}, deux points de multiplicité $1$ ne peuvent pas être alignés avec le point de multiplicité maximale.
\end{rmq}

Un ensemble de points de $\B(\P^2)$ est dit \emph{en position presque générale} s'il est pré-consistant et qu'aucune des situations suivantes n'est satisfaite :\begin{itemize}[wide]
    		\item quatre des points de cet ensemble sont alignés,
    		\item sept des points de cet ensemble sont sur une conique,
    		\item deux des points de cet ensemble sont adhérents à un troisième point.
    	\end{itemize}
 C'est en fait une terminologie utilisée par Dolgachev \cite[p.397]{Dol}. Cependant, comme il s'intéresse aux surfaces del Pezzo, il impose en plus le fait qu'il y ait au plus $8$ points.
 Une surface $S$ est \emph{faiblement del Pezzo} si son diviseur canonique est numériquement effectif et big, c'est à dire que pour toute courbe $C$ de $S$ le nombre d’intersection entre la courbe $C$ et le diviseur canonique est positif ou nul, et que ce dernier est d'auto-intersection strictement positive : \[C\cdot \can_S \geq 0 \text{ et } \can_S^2>0.\]

\begin{prop}[{\cite[Corollary 8.1.17]{Dol}}]\label{prop_weak_del_Pezzo_presque_general}
Une surface rationnelle est faiblement del Pezzo si et seulement si elle a été obtenue en éclatant $k\leq 8$ points de $\P^2$ en position presque générale.
\end{prop}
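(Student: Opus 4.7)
Le plan consiste à traiter les deux implications séparément en utilisant le fait que $\can_S^2 > 0$ équivaut, pour $S$ éclatement de $\P^2$, à la borne $k\leq 8$ sur le nombre de points éclatés (car chaque éclatement diminue $\can^2$ de $1$ à partir de $\can_{\P^2}^2 = 9$), et en exploitant la nefness de $-\can_S$ comme une condition d'intersection positive avec toute courbe irréductible.

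Pour l'implication réciproque, je partirais de $\pi : S \to \P^2$ obtenu en éclatant $k \leq 8$ points $p_0, \dots, p_{k-1}$ en position presque générale. On a immédiatement $\can_S^2 = 9 - k > 0$ et la classe $-\can_S$ est effective (représentée par la transformée stricte de trois droites génériques passant par les $p_i$), donc big dès qu'elle est nef. Je supposerais par l'absurde l'existence d'une courbe irréductible $C$ avec $-\can_S \cdot C < 0$ ; la formule d'adjonction impose que $C$ est rationnelle lisse avec $C^2 \leq -2$. En écrivant $C$ soit comme (transformée stricte d'un) diviseur exceptionnel contracté par $\pi$, soit comme transformée stricte d'une courbe de $\P^2$ de degré $d$ passant par les $p_i$ avec multiplicité $\mu_i$, les formules $-\can_S \cdot C = 3d - \sum \mu_i$ et $C^2 = d^2 - \sum \mu_i^2$, combinées à Bézout et à la positivité des excès (Proposition \ref{prop_consistence}), forceraient $C$ à être de l'un des trois types suivants : la transformée stricte d'une droite passant par au moins $4$ des $p_i$ (donnant $C^2 = 1-4=-3$ et $-\can_S\cdot C = -1$), celle d'une conique passant par au moins $7$ des $p_i$ (donnant $C^2 = 4-7=-3$ et $-\can_S\cdot C = -1$), ou celle d'un diviseur exceptionnel $E_p$ avec au moins deux des $p_j$ adhérents à $p$. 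Chacune contredit une des trois conditions de la position presque générale.

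Pour l'implication directe, si $S$ est rationnelle et faiblement del Pezzo, je me ramènerais par la classification des surfaces rationnelles minimales (et en utilisant que $-\can_S$ nef restreint les $(-n)$-courbes et permet d'éliminer les $\F_n$ pour $n\geq 3$ via des transformations élémentaires) à un morphisme birationnel $S \to \P^2$ qui se factorise en éclatements successifs. La condition $\can_S^2 > 0$ impose alors $k \leq 8$. Pour voir que les points éclatés sont en position presque générale, je raisonnerais par contraposée : chacune des trois configurations interdites construit explicitement une courbe irréductible dont l'intersection avec $-\can_S$ vaut $-1$, contredisant la nefness (c'est le même calcul que ci-dessus, à l'envers).

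L'obstacle principal sera la classification exhaustive des courbes irréductibles $C$ avec $-\can_S \cdot C < 0$ dans la première implication : il faut exclure toute courbe exotique de plus haut degré (par exemple une cubique singulière passant par les points avec multiplicités adéquates). Cela repose sur une induction, par exemple en raisonnant sur une courbe de degré minimal parmi celles contredisant la nefness, et en invoquant les équations de Noether (Lemme \ref{lemme_equations_evidentes}) pour restreindre les caractéristiques possibles, puis la positivité des excès (Proposition \ref{prop_consistence}) pour retomber inévitablement sur l'une des trois configurations interdites.
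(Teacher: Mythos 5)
Le texte ne démontre pas cet énoncé : il est cité tel quel de Dolgachev (Corollary 8.1.17) et utilisé comme une boîte noire, il n'y a donc pas de preuve interne à laquelle comparer votre tentative. Votre architecture est bien celle de la preuve standard ($\can_S^2=9-k$ contrôle le nombre de points éclatés, et la nefness de $-\can_S$ se teste sur les courbes irréductibles, qu'il faut classifier), mais deux détails sont à corriger. D'une part $-\can_S$ n'est pas représentée par \og trois droites génériques passant par les $p_i$\fg{} mais par une cubique passant par les $k\leq 8$ points ; en fait l'effectivité est superflue, un diviseur nef d'auto-intersection strictement positive étant automatiquement big. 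D'autre part l'implication directe telle que vous l'esquissez (réduction à un morphisme birationnel vers $\P^2$) échoue pour $\F_0=\P^1\times\P^1$ et $\F_2$, qui sont faiblement del Pezzo de degré $8$ sans être des éclatements de $\P^2$ ; le \og si et seulement si\fg{} demande donc cette réserve, imprécision déjà présente dans l'énoncé tel qu'il est cité, mais que votre rédaction devrait signaler.

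Le point que vous nommez vous-même \og l'obstacle principal\fg{} est en réalité tout le contenu de la proposition, et les outils que vous proposez pour le franchir ne conviennent pas : les équations de Noether du lemme \ref{lemme_equations_evidentes} portent sur le système homaloïdal d'une application de Cremona et ne s'appliquent pas à une courbe irréductible quelconque, et une récurrence sur le degré minimal n'est pas le bon mécanisme. L'argument qui clôt la classification est élémentaire mais doit être fait : si $C$ est la transformée stricte d'une courbe plane irréductible de degré $d\geq 1$ de multiplicités $\mu_i$ (éventuellement en des points infiniment proches) aux $k\leq 8$ points, la formule du genre donne $\sum\mu_i(\mu_i-1)\leq (d-1)(d-2)$, donc $\sum\mu_i^2\leq d^2-3d+2+\sum\mu_i$ ; l'inégalité de Cauchy--Schwarz donne $\big(\sum\mu_i\big)^2\leq 8\sum\mu_i^2$, et en injectant l'hypothèse $-\can_S\cdot C<0$, c'est-à-dire $\sum\mu_i\geq 3d+1$, on obtient $(3d-3)^2\leq 8d^2-24d+32$, soit $d^2+6d-23\leq 0$, donc $d\leq 2$. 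Les seuls cas possibles sont alors bien la droite par au moins $4$ points, la conique par au moins $7$ points et, pour $d=0$, la transformée stricte d'un diviseur exceptionnel au-dessus d'un point auquel au moins deux autres sont adhérents. Sans ce calcul, votre classification n'exclut pas a priori une cubique nodale ou une quartique à trois points doubles passant par les huit points, et la preuve reste incomplète.
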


\begin{lemme}\label{lemme_config_f_config_inverse}
	Soit $f\in\Bir(\P^2)$ une application ayant au plus $8$ points-base. Les points-base de $f$ sont en position presque générale si et seulement si les points-base de $f^{-1}$ le sont.
\end{lemme}
 \begin{proof}
D'après la proposition \ref{prop_weak_del_Pezzo_presque_general}, une surface est faiblement del Pezzo si et seulement si la suite de points éclatés satisfait les conditions du lemme. La surface obtenue en éclatant les points-base de $f$ et de $f^{-1}$ étant la même nous obtenons le résultat.
 \end{proof}

\subsubsection{Générateurs du groupe de Cremona}\label{subsection_Jonquieres}

Soient $f\in\Bir(\P^2)$ et $p_0$ un de ses points de multiplicité maximale $m_0$.
En suivant la terminologie de \cite{AC}, nous appelons \emph{complexité} de $f$ le nombre \[\car_{f}=\frac{d-m_0}{2}.\] Un point-base $p$ de $f$ différent de $p_0$ est dit \emph{majeur} si sa multiplicité $m_p$ est strictement supérieure à la complexité de $f$ : \[m_p>\frac{d-m_0}{2}.\] Remarquons que si $f$ possède plusieurs points de multiplicité maximale, si nous changeons le choix du point maximal, la complexité de $f$ reste identique ainsi que le nombre de points-base majeurs. Nous notons $\Maj(f)$ cet ensemble et $h$ son cardinal.

	L'involution standard de Cremona, qui s'écrit dans une carte locale : \[\sigma : (x,y) \dashrightarrow (\frac{1}{x},\frac{1}{y})\] est un exemple d'application qui appartient au groupe de Jonquières.
	Elle a une importance particulière dans le groupe de Cremona, puisque comme le corps de base est algébriquement clos elle engendre avec le groupe des automorphismes de $\P^2$ tout le groupe. Le théorème qui suit n'est plus valable si le corps de base n'est pas algébriquement clos. 
	
	\begin{thm}[M. Noether et G. Castelnuovo \cite{Castelnuovo}]\label{thm_générateurs_Cremona}
	Le groupe de Cremona $\Bir(\P^2)$ est engendré par $\sigma$ et $\PGL(3,\kk)$.
	\end{thm}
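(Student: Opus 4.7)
The plan is a classical induction on $d = \deg f$ for $f \in \Bir(\P^2_\kk)$. The base case $d = 1$ gives $f \in \PGL(3,\kk)$ with nothing to prove. For the inductive step with $d \geq 2$, I aim to produce an element $a \in \PGL(3,\kk)$ such that $\deg(\sigma \circ a \circ f) \leq d-1$; the inductive hypothesis then expresses $\sigma \circ a \circ f$ as a word in $\sigma$ and $\PGL(3,\kk)$, and composing with $(\sigma \circ a)^{-1} \in \langle \sigma, \PGL(3,\kk)\rangle$ gives the desired expression for $f$.

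The crucial ingredient is \emph{Noether's inequality}: if $m_0' \geq m_1' \geq m_2' \geq \cdots$ denote the multiplicities of the base points of $f^{-1}$, then
\[m_0' + m_1' + m_2' \geq d + 1\]
whenever $d \geq 2$. This follows from the Noether identities \ref{eq_Noethercanonique} and \ref{eq_dcarre_micarre} of Lemma \ref{lemme_equations_evidentes} applied to $f^{-1}$: assuming the contrary, the bound $m_i' \leq m_2'$ for $i \geq 3$ inserted in $\sum (m_i')^2 = d^2 - 1$ together with $\sum m_i' = 3(d-1)$ leads to a contradiction after a short manipulation.

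Assume first the favourable situation that the three top-multiplicity base points $q_0, q_1, q_2$ of $f^{-1}$ are three distinct proper points of $\P^2$. Bézout (Proposition \ref{prop_Bezout}) applied to $f^{-1}$, combined with Noether's inequality, forbids these three points from being collinear, so one can choose $a \in \PGL(3,\kk)$ mapping $q_0, q_1, q_2$ onto $[1:0:0], [0:1:0], [0:0:1]$, the three base points of $\sigma$. Using the action formulas \ref{action_f_sur_l} and \ref{action_f_sur_pt_base} stepwise to compute $\isome{\sigma}(\isome{a}(\isome{f}(\l)))$, one reads off the coefficient of $\l$ as
\[\deg(\sigma \circ a \circ f) = 2d - m_0' - m_1' - m_2' \leq d - 1,\]
which closes the induction in this case.

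The main obstacle is to reduce an arbitrary $f$ to this favourable situation. The top three base points of $f^{-1}$ may fail to be three proper points of $\P^2$: some may be infinitely near to others, living on successive blow-ups. The remedy, due to Castelnuovo, is to precompose $f$ beforehand with a carefully chosen quadratic Jonquières transformation whose own base points are placed so as to ``untangle'' the tower of infinitely near top base points of $f^{-1}$; such a transformation preserves $d$ but strictly reduces the number of top base points of the resulting map that are infinitely near. Iterating finitely many times brings us to the favourable configuration, after which the previous argument applies. I expect this combinatorial bookkeeping -- making precise the choice of auxiliary quadratic transformations and controlling how the infinitely near structure evolves -- to be the delicate step of the proof, and it is here that the algebraic closure of $\kk$ is used essentially, in order to realise these auxiliary transformations over $\kk$ in arbitrary infinitesimal configurations.
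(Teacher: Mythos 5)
Le texte ne démontre pas ce théorème : il le cite (\cite{Castelnuovo}) et se contente d'en retracer l'histoire après l'énoncé. Votre proposition est donc à comparer à cette discussion, et elle reproduit en fait exactement l'argument original de M. Noether, y compris sa lacune. La partie « favorable » est correcte : l'inégalité de Noether $m_0'+m_1'+m_2'\geq d+1$ découle bien des relations \ref{eq_Noethercanonique} et \ref{eq_dcarre_micarre} du lemme \ref{lemme_equations_evidentes}, la non-colinéarité des trois points via Bézout (Proposition \ref{prop_Bezout}) est juste, et le calcul $\deg(\sigma\circ a\circ f)=2d-m_0'-m_1'-m_2'\leq d-1$ clôt l'induction lorsque les trois points-base de plus grande multiplicité sont des points propres distincts de $\P^2$.

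La lacune est dans le cas dégénéré, que vous reléguez à un « bookkeeping combinatoire » non effectué. Vous affirmez qu'il existe une transformation quadratique auxiliaire qui préserve $d$ et diminue strictement le nombre de points-base maximaux infiniment proches ; ni l'existence ni la décroissance ne sont justifiées, et c'est précisément là que l'argument de Noether échoue. Si les trois points de plus grande multiplicité forment une tour $q_2\to q_1\to q_0$ (deux points adhérents à un troisième, comme pour $(x,y)\mapsto(y^3-x,y)$), aucune application quadratique n'admet cette configuration comme points-base (Remarque \ref{rmq_pas_deuxpoints_adherents_petit_jonquieres}), et il n'est pas vrai en général qu'une quadratique basée en d'autres points préserve le degré tout en « démêlant » la tour : $\deg(f\circ\q^{-1})=2d-\mu_0-\mu_1-\mu_2$ peut très bien croître si l'on est forcé de prendre un point-base de petite multiplicité. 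La correction, due à J.~Alexander et exposée dans \cite{AC}, ne procède pas par votre invariant mais par une double récurrence sur le couple (complexité $\car_f=\frac{d-m_0}{2}$, nombre de points-base majeurs) : quand on peut baser la quadratique en $p_0$ et deux points majeurs la complexité décroît strictement, sinon on utilise un point général de $\P^2$ et c'est le nombre de points majeurs qui décroît. C'est ce schéma, et non le vôtre, qui sous-tend aussi le théorème \ref{thm_caraminana}. En l'état, votre preuve est incomplète exactement au point historiquement délicat.
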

	M. Noether fut le premier a énoncer ce théorème à la fin du 19\ieme \ siècle. Cependant, la première preuve exacte est due à G. Castelnuovo. L'idée de \og la preuve\fg \  de M. Noether est la suivante. Considérons une application $f$. En pré-composant par une application quadratique dont les trois points-base sont trois points-base de $f$ de plus grande multiplicité, le degré de $f$ diminue. En réitérant ce procédé le degré devient $1$ et la composée est une application linéaire. Cependant il n'existe pas toujours une telle application quadratique. En effet, une application quadratique ne peut pas avoir deux de ses points-base adhérents au troisième (Remarque \ref{rmq_pas_deuxpoints_adherents_petit_jonquieres}). Or il est possible que les points-base de multiplicité maximale d'une application soit dans cette configuration. C'est le cas notamment des automorphisme polynomiaux, comme par exemple $(x,y)\mapsto (y^3-x,y)$. Ce problème n'avait pas été vu par M. Noether car le formalisme des points adhérents n'existait pas.
	Dans \cite{Alex}, J. Alexander corrige la preuve de M. Noether en introduisant la complexité d'une application. S'il existe une application quadratique dont les points-base sont le point-base maximal de $f$ et deux points-base majeurs alors la complexité diminue strictement. Sinon, il faut utiliser un troisième point qui vit dans $\P^2$. Dans ce cas, la complexité reste identique mais le nombre de points-base majeurs diminue. La récurrence se fait sur ces deux entiers positifs. 
	
 	Une conséquence de ce théorème est que la réunion du groupe de Jonquières et du groupe des automorphismes de $\P^2$ est également un système de générateurs du groupe de Cremona. Ce résultat est plus faible mais parfois plus maniable. Il a également l'avantage de pouvoir décomposer une application $f\in\Bir(\P^2)$ en produit de transformations de Jonquières de sorte que le degré augmente à chaque pré-composition.

\begin{thm}[{\cite[Theorem 8.3.4]{AC}}]\label{thm_caraminana}
Toute application de Cremona $f$ est composée d'applications de Jonquières $\j_i$ et d'un élément $a\in\PGL(3,\kk)$ :
\[f=a\circ \j_n\circ\cdots\circ\j_1,\]
de sorte qu'à chaque pré-composition par une application de Jonquières le degré augmente strictement, pour tout $1\leq i\leq n-1$ : 
\[\deg(a\circ\j_n\circ\cdots\circ\j_{i+1}\circ\j_i)>\deg(a\circ\j_n\cdots\circ\j_{i+1}).\]
\end{thm}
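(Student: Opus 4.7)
The plan is to proceed by strong induction on the degree $d = \deg(f)$. The base case $d = 1$ is immediate: $f \in \PGL(3,\kk)$, and we take $n = 0$ with $a = f$. For $d \geq 2$, the induction reduces to the following key step: produce a Jonquières application $\j$ such that $\deg(f \circ \j^{-1}) < d$. Applying the induction hypothesis to $f \circ \j^{-1}$ yields a decomposition $f \circ \j^{-1} = a \circ \j_n \circ \cdots \circ \j_2$ with strictly increasing partial degrees, and setting $\j_1 := \j$ produces the desired $f = a \circ \j_n \circ \cdots \circ \j_1$; the last strict inequality $\deg(f) > \deg(f \circ \j_1^{-1})$ holds by the construction of $\j_1$.

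To construct $\j$, let $p_0$ be a base point of $f$ of maximal multiplicity $m_0$. The Noether equations (items 1 and 2 of Lemma \ref{lemme_equations_evidentes}) imply that the three largest multiplicities $m_0 \geq m_1 \geq m_2$ of $f$ satisfy $m_0 + m_1 + m_2 \geq d + 1$, as recorded in the remark following that lemma. The first attempt is to let $\j$ be the quadratic involution with base points $p_0, p_1, p_2$. Using the action \eqref{action_f_sur_l}, the identity $\isome{f \circ \j^{-1}} = \isome{f} \circ \isome{\j}^{-1}$, and the fact that $\deg(g) = \isome{g}(\ell) \cdot \ell$, a direct computation gives
\[\deg(f \circ \j^{-1}) = 2d - m_0 - m_1 - m_2 \leq d - 1,\]
so the degree strictly decreases whenever such a $\j$ exists.

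The obstruction to using this quadratic is geometric: the set $\{p_0, p_1, p_2\}$ must be pre-consistent, with no two points adherent to a third and no three collinear (cf. Remark \ref{rmq_pas_deuxpoints_adherents_petit_jonquieres}). When these conditions fail, one follows Alexander's refinement, replacing one of $p_1, p_2$ by an auxiliary free point of $\P^2$ or, more generally, letting $\j$ be a Jonquières of higher degree $m > 2$ whose base points include $p_0$ together with additional base points of $f$ of large multiplicity. The analogous computation yields
\[\deg(f \circ \j^{-1}) = md - (m-1) m_0 - \sum_{i=1}^{2m-2} m_i,\]
and one verifies, using the positivity of excesses (Proposition \ref{prop_consistence}) and Bézout (Proposition \ref{prop_Bezout}), that $\j$ can always be arranged so that either the complexity $\car_f = (d-m_0)/2$ strictly decreases, or $\car_f$ stays constant and the number $h$ of major base points strictly decreases.

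The main obstacle is this last case analysis: producing a pre-consistent base-point scheme for $\j$ in every configuration of $f$, and certifying the strict degree drop in the formula above. Formally this amounts to a double induction on the pair $(\car_f, h)$, and the full verification — including the choice of auxiliary free points in $\P^2$ when the base points of $f$ alone are inadequate (for instance when two major base points of $f$ adhere to $p_0$, as happens for H\'enon-type automorphisms) — is the content of \cite[Theorem 8.3.4]{AC}, from which this refinement of the Noether-Castelnuovo theorem is quoted.
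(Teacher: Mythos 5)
Le texte de l'article ne démontre pas cet énoncé : il est cité de \cite[Theorem 8.3.4]{AC}, et le paragraphe qui le suit se contente d'esquisser l'algorithme de Castelnuovo. Votre proposition renvoie elle aussi la partie difficile à cette référence, mais le mécanisme précis que vous décrivez pour les configurations obstruées est le mauvais, et il est incompatible avec le schéma de récurrence que vous avez choisi. Vous posez une récurrence sur le degré, ce qui exige de produire à chaque étape une application de Jonquières $\j$ avec $\deg(f\circ\j^{-1})<\deg(f)$. Or le repli que vous invoquez — remplacer l'un des points $p_1,p_2$ par un point libre auxiliaire $q\in\P^2$ et mener une double récurrence sur $(\car_f,h)$ — est la comptabilité d'Alexander pour la génération par les quadratiques (théorème de Noether--Castelnuovo), et dans ce schéma le degré ne baisse \emph{pas} aux étapes problématiques : si $q$ n'est pas un point-base de $f$, la quadratique $\q$ de points-base $p_0,p_1,q$ donne $\deg(f\circ\q^{-1})=2d-m_0-m_1\geq d$, puisque $m_0+m_1\leq d$ par Bézout (Proposition \ref{prop_Bezout}, appliquée comme dans la preuve du lemme \ref{lemme_nbr_pt_adh_majeur}). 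L'invariant « la complexité décroît, ou elle est constante et $h$ décroît » est donc strictement plus faible que la croissance stricte des degrés affirmée par le théorème, et ne ferme pas votre récurrence sur le degré.

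La voie correcte, celle que l'article décrit après l'énoncé et dont il redémontre une partie (Lemme \ref{lemme_nbr_pt_adh_majeur} et Proposition \ref{prop_supp_jonq_points_majeurs}), évite tout point auxiliaire : on prend $\j$ de degré $\delta+1$, de point-base maximal $p_0$, dont les $2\delta$ petits points-base sont \emph{tous} des points-base majeurs de $f$ (tous les points majeurs, moins un si leur nombre est impair, la moitié d'entre eux étant adhérents à $p_0$). Votre propre formule donne alors
\[\deg(f\circ\j^{-1})=(\delta+1)d-\delta m_0-\sum_{i=1}^{2\delta}m_i<(\delta+1)d-\delta m_0-2\delta\car_{f}=d,\]
puisque chaque point majeur vérifie $m_i>\car_{f}=\frac{d-m_0}{2}$ : le degré chute strictement à chaque étape et la récurrence sur le degré aboutit. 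Le contenu non trivial, qui est précisément ce que \cite{AC} établit (et que la Proposition \ref{prop_supp_jonq_points_majeurs} redémontre dans un cas particulier), est que cet ensemble de points majeurs, équilibré entre points adhérents et non adhérents à $p_0$, est bien le schéma de points-base d'une véritable application de Jonquières.
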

Dans {\cite[Theorem 8.3.4]{AC}}, M. Alberich-Carrami\~{n}ana n'énonce pas la seconde partie du théorème, qu'elle démontre pourtant. 
Il est facile de voir qu'en pré-composant $f$ par une application de Jonquières dont l'inverse a pour point-base maximal le point-base de $f$ de multiplicité maximale, et ses petits points-base sont parmi les points-base majeurs de $f$, alors le degré diminue strictement. La partie difficile est de montrer qu'il existe une telle application de Jonquières $\j$. Elle montre que c'est le cas si les petits points-base de $\j^{-1}$ sont choisis comme étant tous les points-base majeurs de $f$, à l'exception d'un lorsque le cardinal des points-base majeurs de $f$ est impair. L'algorithme qu'elle utilise et qui est dû à G. Castelnuovo consiste à considérer une telle application de Jonquières $\j_1$. Puis elle considère $f\circ \j_1^{-1}$et réitère le procédé jusqu'à obtenir un automorphisme.

\subsubsection{Support d'une transformation de Jonquières}
Dans cette partie, nous déterminons dans un cas très spécial, celui où il existe au moins un point adhérent à un autre point, à quelles conditions un ensemble pré-consistant de points de $\B(\P^2)$ est le support d'une application de Jonquières. La preuve proposée ici est plus simple que celle présentée dans \cite{AC}, mais elle ne traite que d'un cas particulier. Le problème dans le cas général est de faire attention au fait que les points-base ne doivent pas se trouver sur une courbe de sorte qu'en éclatant les points et en contractant les fibres, la courbe devienne d'auto-intersection strictement plus petite que $-1$. Par exemple, trois points alignés ne forment pas le support d'une transformation quadratique. 

\begin{lemme}\label{lemme_nbr_pt_adh_majeur}
Soient $f\in\Bir(\P^2)$ et $p$ un de ses points-base de multiplicité maximale. Parmi les points-base majeurs de $f$, le nombre de points-base adhérents à $p$ est inférieur ou égal au nombre de points-base non adhérents à $p$. 
\end{lemme}

\begin{proof}
Nous reprenons lors des deux premiers points des calculs connus et faits par exemple dans la Section 8.2 de \cite[Lemma 8.2.3 et 8.2.6]{AC}. Notons $r$ le nombre de points-base de $f$ et $h$ le nombre de points-base majeurs de $f$. 
\begin{itemize}[wide]
\item D'après le théorème de Bézout (Proposition \ref{prop_Bezout}), nous avons $d\geq m_i+m_0$ pour tout $1\leq i\leq r-1$, ce qui implique : 
\[ 2\car_{f}\geq m_i.\]
\item Multiplions par $\car_{f}$ l'équation \ref{eq_Noethercanonique} du lemme \ref{lemme_equations_evidentes} et soustrayons cela à l'équation \ref{eq_dcarre_micarre} du même lemme :
\begin{align*}
\sum\limits_{i=0}^{r-1}m_i(m_i-\car_{f})&=d^2-1-3d\car_{f}+3\car_{f}\\
&=d(d-3\car_{f})+3\car_{f}-1\\
&=(m_0+2\car_{f})(m_0-\car_{f})+3\car_{f}-1,
\end{align*}

ce qui implique : 
\[\sum\limits_{i=1}^{r-1}m_i(m_i-\car_{f})=2\car_{f}(m_0-\car_{f})+3\car_{f}-1.\]
Comme pour $i>h$ les points ne sont pas majeurs nous avons 
\[\sum\limits_{i=1}^{h}m_i(m_i-\car_{f})>2\car_{f}(m_0-\car_{f}),\]
ce qui donne finalement par le premier point \[\sum\limits_{i=1}^{h}(m_i-\car_{f})>m_0-\car_{f}.\]
\item Notons $h_a$ le nombre de points-base de $f$ majeurs et adhérents à $p_0$. Quitte à réordonner les indices, l'équation précédente se ré-écrit :
\[\sum\limits_{i=1}^{h_a}m_i+ \sum\limits_{i=h_a+1}^{h}m_i>m_0+(h-1)\car_{f}.\]
Par positivité des excès en $p_0$ pour $f$ (Proposition \ref{prop_consistence}) et par le premier point de la preuve : 
\[m_0+(h-h_a)2\car_{f}>m_0+(h-1)\car_{f}\]
ce qui entraîne :\[2(h-h_a)\geq h.\] Par conséquent $\frac{h}{2}\geq h_a$ ce qui signifie qu'il y a au plus autant de points-base majeurs adhérents à $p_0$ que de points-base majeurs non adhérents.\qedhere \end{itemize}
\end{proof}

\begin{lemme}\label{lemme_support_Jonquieres}
Considérons un ensemble pré-consistant de points $\{p_i\}_{0\leq i\leq 2\delta}$ de $\B(\P^2)$ satisfaisant \begin{enumerate}[wide, label=\alph*)]
\item\label{condition_1} exactement $\delta$ points sont adhérents à $p_0$,
\item\label{condition_2} pour toute paire $(i,j)$ où $1\leq i<j\leq 2\delta$, les points $p_i$, $p_j$ et $p_0$ ne sont pas alignés,
\item\label{condition_3} si deux points de cet ensemble sont adhérents à un même troisième point $p_i$ de cet ensemble alors $i=0$.
\end{enumerate}
Alors il existe une application de Jonquières de degré $\delta+1$ qui possède cet ensemble comme points-base et dont $p_0$ est le point-base majeur.
\end{lemme}
\begin{proof}
Lorsque nous éclatons le point $p_0$, il n'y a pas deux points sur une même fibre d'après la condition \ref{condition_2}. Nous éclatons ensuite les $\delta$ points adhérents, et nous contractons les transformées strictes des fibres correspondantes. Par la condition \ref{condition_3}, il n'y a toujours pas deux points sur une même fibre. La surface obtenue est une surface de Hirzebruch $\F_{\delta+1}$ de section exceptionnelle d'auto-intersection $-(\delta+1)$. Ensuite, nous éclatons un point non adhérent puis contractons la fibre passant par ce point. En faisant cela, nous obtenons la surface de Hirzeburch dont la section exceptionnelle est d'auto-intersection un de plus par rapport à la surface précédente et il n'y a toujours pas deux points sur une même fibre. Ainsi en répétant cette opération pour les $\delta$ points non adhérents nous obtenons une surface de Hirzeburch $\F_1$ et en contractant la section exceptionnelle nous obtenons une application de Jonquières de degré $\delta+1$.
\end{proof}

\begin{prop}\label{prop_supp_jonq_points_majeurs}
Soit $f\in\Bir(\P^2)$ possédant un point-base majeur adhérent à son (ou à un des ses) point-base maximal $p_0$. 
Pour tout sous-ensemble de l'ensemble des points-base majeurs de $f$, pré-consistant, de cardinal pair $2\delta$ et possédant un nombre de points adhérents à $p_0$ égal à $\delta$, il existe une application de Jonquières de degré $\delta+1$ ayant $p_0$ comme point-base maximal et cet ensemble de points comme autres points-base. De plus, un tel sous-ensemble existe toujours.
\end{prop}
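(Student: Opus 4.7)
Le plan consiste à appliquer le Lemme \ref{lemme_support_Jonquieres} à l'ensemble $T\cup\{p_0\}$, où $T$ désigne le sous-ensemble pré-consistant donné de $\Maj(f)$. Pour l'existence d'un tel sous-ensemble (seconde affirmation), je noterais $h_a$ le nombre de points-base majeurs adhérents à $p_0$ et $h$ le cardinal de $\Maj(f)$~: l'hypothèse entraîne $h_a\geq 1$, et le Lemme \ref{lemme_nbr_pt_adh_majeur} assure $h_a\leq h-h_a$. On sélectionne alors les $h_a$ points majeurs adhérents à $p_0$ et $h_a$ points majeurs non adhérents, en veillant à compléter la sélection le long des tours d'éclatements pour garantir la pré-consistance, et on pose $\delta=h_a$.

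Pour l'existence de l'application de Jonquières, il suffit de vérifier les trois conditions \ref{condition_1}, \ref{condition_2} et \ref{condition_3} du Lemme \ref{lemme_support_Jonquieres}. La condition \ref{condition_1} est l'hypothèse même sur $T$. Pour la condition \ref{condition_2}, on raisonne par l'absurde~: si trois points $p_0, p_i, p_j$ (avec $p_i, p_j\in T$) étaient alignés sur une droite $L$, le théorème de Bézout (Proposition \ref{prop_Bezout}) appliqué à $L$ et à une courbe générique du système linéaire associé à $f$ donnerait $d\geq m_0+m_i+m_j$~; or la majorité de $p_i$ et $p_j$ entraîne $m_i+m_j>2\car_f=d-m_0$, d'où $m_0+m_i+m_j>d$, contradiction.

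La condition \ref{condition_3} est la plus délicate. Supposons que $p_i, p_j\in T$ soient tous deux adhérents à un point $p_k\in T$ avec $k\ne 0$. La positivité des excès en $p_k$ (Proposition \ref{prop_consistence}) donne $m_k\geq m_i+m_j>2\car_f=d-m_0$, soit $m_0+m_k>d$. On cherche alors à contredire cette inégalité par le théorème de Bézout appliqué à une courbe passant par $p_0$ et $p_k$~: une droite suffit lorsque $p_k\in\P^2$ ou lorsque $p_k$ est adhérent à $p_0$, mais le cas général où $p_k$ est infiniment proche d'un autre point de $T$ sans l'être de $p_0$ nécessite de construire explicitement une courbe de degré adéquat en exploitant la structure de tour imposée par la pré-consistance de $T\cup\{p_0\}$. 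C'est sur ce point, ainsi que sur le choix effectif des points non adhérents lors de l'étape d'existence, que reposeront les principales difficultés techniques de la preuve, les autres étapes étant des applications directes des outils développés plus haut.
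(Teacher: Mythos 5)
Votre stratégie est celle du texte : appliquer le lemme \ref{lemme_support_Jonquieres} à $T\cup\{p_0\}$, l'existence d'un tel sous-ensemble étant garantie par l'hypothèse et le lemme \ref{lemme_nbr_pt_adh_majeur}, la condition \ref{condition_1} étant l'hypothèse sur $T$, et la condition \ref{condition_2} se vérifiant exactement comme vous le faites ($d\geq m_0+m_i+m_j$ par Bézout appliqué à la droite, et $m_i+m_j>d-m_0$ car $p_i$ et $p_j$ sont majeurs). Jusque-là tout est correct et conforme à la preuve du papier.

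En revanche, votre vérification de la condition \ref{condition_3} reste inachevée, et c'est un vrai trou : vous annoncez qu'il faudrait \og construire explicitement une courbe de degré adéquat\fg{} lorsque le troisième point $p_k$ n'est ni dans $\P^2$ ni adhérent à $p_0$, sans donner cette construction. Aucune courbe de degré supérieur n'est nécessaire ; l'idée manquante est une \emph{descente par les excès}. De $m_k\geq m_i+m_j>d-m_0$ (Proposition \ref{prop_consistence}), on déduit, en appliquant la positivité des excès à chaque étage de la tour de points qu'il faut éclater pour atteindre $p_k$, qu'il existe un point $q$ de multiplicité $m_q\geq m_k>d-m_0$ qui est soit un point libre adhérent à $p_0$ (dans le cas où $p_k$ est voisin de $p_0$), soit un point de $\P^2$ distinct de $p_0$ (sinon). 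La droite passant par $p_0$ et $q$ (c'est-à-dire, dans le premier cas, la droite passant par $p_0$ dont la transformée stricte passe par $q$) donne alors par la proposition \ref{prop_Bezout} l'inégalité $d\geq m_0+m_q>m_0+(d-m_0)=d$, qui est la contradiction cherchée. C'est précisément cette réduction au cas d'une droite qui fait fonctionner la preuve du texte et qui manque dans votre proposition ; sans elle, le cas d'un point $p_k$ infiniment proche non adhérent à $p_0$ n'est pas traité.
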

\begin{proof}
Nous cherchons à utiliser le lemme \ref{lemme_support_Jonquieres}. Pour cela, plusieurs points sont à vérifier : 
\begin{itemize}[wide]
\item L'hypothèse sur $f$ et le lemme \ref{lemme_nbr_pt_adh_majeur} justifient le fait qu'il existe un sous-ensemble de l'ensemble des points-base majeurs de $f$ qui possède autant de points adhérents à $p_0$ que de points non adhérents.
\item D'après le théorème de Bézout \ref{prop_Bezout}, il n'existe pas deux points majeurs alignés avec le point $p_0$. 
\item Il n'y a pas deux points majeurs adhérents à un point majeur. Sinon, par la proposition \ref{prop_consistence}, il existerait un point majeur $p_1$ de multiplicité $m_1>d-m_0$. Mais dans ce cas, il y aurait un point $q$ libre et adhérent à $p_0$ (si $p_1$ est voisin de $p_0$) ou dans $\P^2$ qui serait de multiplicité strictement supérieure à $d-m_0$. Ceci contredit le théorème de Bézout (proposition \ref{prop_Bezout}) en considérant la droite passant par les points $p_0$ et $q$.
\end{itemize}
Ainsi, d'après le lemme \ref{lemme_support_Jonquieres}, il existe une application de Jonquières de degré $\delta+1$ ayant $p_0$ comme point-base de multiplicité maximal et possédant cet ensemble comme petits points-base.
\end{proof}

\begin{rmq}
Sans la condition qu'il y ait autant de points adhérents à $p_0$ que de points non adhérents ce n'est pas possible de conclure. En effet, considérons les points $\{p_0,\dots,p_6\}$ de $\P^2$ tels qu'il n'y ait pas deux points alignés avec $p_0$, que les points $p_1,\dots,p_4$ soient alignés et que les points $p_5$ et $p_6$ n'appartiennent pas à la droite contenant les points $p_1,\dots,p_4$.
\begin{center}
\scalebox{0.5}{
\begin{tikzpicture}[node distance=3cm, font=\normalsize, very thick]
\node (p3)  [label=below:{$p_1$}] {};

\node (p4)  [right=of p3,
			 label=below:{$p_2$}] {};
			 
\node (p5)  [right=of p4,
			 label=below:{$p_3$}] {};			 
			 
\node (p6)  [right=of p5,
			 label=below:{$p_4$}] {};
			 
\node (p2)  [above=of p3,
			 label=below:{$p_6$}] {};

\node (p1)  [above=of p6,
			 label=below:{$p_5$}] {};
			 
\node (p7)  [above right=of p3] {};	

\node (p0)  [above right=of p7,
			  label=below:{$p_0$}] {};
\node (p8)  [left=of p3] {};
\node (p9)  [right=of p6] {};
			 
\draw (p2) node {$\bullet$};
\draw (p3) node {$\bullet$};
\draw (p4) node {$\bullet$};
\draw (p5) node {$\bullet$};
\draw (p6) node {$\bullet$};
\draw (p1) node {$\bullet$};
\draw (p0) node {$\bullet$};
\draw (p8)--(p9);
\end{tikzpicture} }
\end{center} 
Ils constituent le support d'une application de Jonquières de degré $4$ et de point-base maximal $p_0$. Mais si nous considérons seulement les points $p_0,p_1,p_2,p_3,p_4$ ils ne forment plus le support d'une application de Jonquières. En effet, si c'était le cas, ce serait une transformation de Jonquières de degré $3$. Ainsi la pré-image d'une droite générale serait une cubique et elle aurait $4$ points d'intersection avec la droite passant par les points $p_1$, $p_2$, $p_3$, $p_4$, ce qui contredirait le théorème de Bézout. L'obstruction géométrique est que si nous éclatons les points $p_0$ et $p_1$ et que nous contractons la droite passant pas ces deux points, nous nous retrouvons sur $\P^1\times \P^1$. En éclatant ensuite le point $p_2$, nous nous retrouvons sur $\F_1$ mais les deux points restants sont sur la section exceptionnelle. Ainsi, il est impossible de revenir sur $\P^2$ en éclatant les deux points restants et en contractant des courbes.
\end{rmq}

 \section{Construction d'un pavage de Voronoï}\label{section_def_orbl}
Considérons l'action du groupe de Cremona sur $\H$. L'objet de cette section est de construire un pavage de Voronoï associé à cette action. Dans la sous-section \ref{sous_section_restriction}, nous nous restreignons à un sous-espace convexe $\orbl$ de $\H$ contenant l'enveloppe convexe de l'orbite de la droite $\l$. Nous ne nous restreignons pas seulement à l'enveloppe convexe de l'orbite de $\l$ car il est parfois difficile de vérifier qu'un élément y appartient. Pour cette raison, nous élargissons l'espace à étudier. Nous construisons alors un pavage de Voronoï de la façon suivante. 
  Nous considérons l'orbite de la classe de la droite $\l$. Cela nous donne un ensemble discret de points de $\orbl$.  
  Cependant le stabilisateur de $\l$ est $\PGL(3,\kk)$. Nous identifions donc deux applications qui diffèrent par un automorphisme : 
  \[f\sim g \Leftrightarrow \text{ il existe }a\in\PGL(3,\kk), \ f=g\circ a.\]
  Nous notons $\overline{f}$ une telle classe.
  Par construction, toute application d'une même classe d'équivalence agit de la même façon sur $\l$. Pour toute classe $\overline{f}$ où $f\in\Bir(\P^2)$, nous associons une cellule de Voronoï notée $\V(f)$ et définie comme : 
  \[\V(f)=\{c\in\orbl\mid \dist(c,\isome{f}(\l)) \leq \dist(c,\isome{g}(\l)) \text{ pour tout }g\in\Bir(\P^2)\}.\]
  Remarquons que toutes les applications du groupe de Cremona appartenant à la même classe d'équivalence indexent la même cellule de Voronoï. 
  Les applications $f'\in\overline{f}$ sont appelées les \emph{germes} associés à $\V(f)$ et les classes $\isome{f}(\l)$ sont les \emph{centres} des cellules de Voronoï.
  Une cellule de Voronoï ne correspond pas tout-à-fait à un domaine fondamental mais c'est l'orbite d'un domaine fondamental sous l'action de $\PGL(3,\kk)$ puisque $\PGL(3,\kk)$ agit non-trivialement sur la cellule $\V(\id)$. 
  
  Nous définissons dans une première sous-section l'espace sur lequel nous faisons agir le groupe de Cremona, puis nous montrons que les cellules de Voronoï recouvrent l'espace construit et enfin nous montrons que les cellules de Voronoï ne s'accumulent pas sur une cellule.

\subsection{Restriction à un sous-espace} \label{sous_section_restriction}
Considérons l'espace hyperbolique $\H$ de dimension infinie obtenu comme hyperboloïde dans l'espace de Picard-Manin qui a été construit à la sous-section \ref{sous-section_hyperobloide_PM}. Nous considérons le sous-espace suivant.
  
\begin{defi}\label{proprietes_c}
   	L'ensemble $\orbl$ est le sous-espace de $\H$ constitué des classes \[c=n\ell-\sum\limits_{p\in\B(\P^2)}\lambda_pe_{p} \ \ \text{ (}n\text{ réel}\geq 1\text{)}\] satisfaisant : 
   	\begin{enumerate}
   		\item\label{propriete_coeff_classe} $\lambda_p\geq 0$ pour tout $p\in \B(\P^2)$,
   		\item\label{propriete_classe_canonique} la \emph{positivité contre la classe anti-canonique} : \[ 3n-\sum\limits_{p\in \B(\P^2)}\lambda_p\geq 0,\]
   		\item\label{propriete_classe_exces} la positivité des excès de tout point $p\in\B(\P^2)$ : \[\lambda_{p} - \sum\limits_{\substack{q\in \B(\P^2)\\ q \to p }}\lambda_{q}\geq0,\]
   		\item\label{propriete_classe_bezout} la \emph{condition de Bézout} : pour toute courbe de $\P^2$ de degré $d$ passant avec multiplicité $\mu_p$ en chaque point $p\in\B(\P^2)$ : \[nd-\sum\limits_{p\in \B(\P^2)}\lambda_p\mu_p\geq0.\]
   	\end{enumerate}
\end{defi}
L'inégalité de la propriété \ref{propriete_classe_canonique} implique que les classes de $\orbl$ sont en fait $L^1$. Par conséquent, la forme canonique est bien définie sur les classes de $\orbl$ et la propriété \ref{propriete_classe_canonique} revient à demander que les classes soient positives contre la forme anti-canonique qui est l'opposé de la forme canonique : \[-\fcan \cdot c\geq 0.\] Lorsqu'une classe satisfait cette inégalité, nous disons qu'elle est \emph{positive contre la classe anti-canonique}.
Le coefficient $n$ est appelé le \emph{degré} de $c$ et les coefficients $\lambda_p$ les \emph{multiplicités} de $c$ aux points $p\in\B(\P^2)$. Les points pour lesquels la multiplicité associée est strictement positive constituent le \emph{support} de $c$. Cet ensemble est noté $\supp(c)$. Remarquons que par définition de $\H$ celui-ci est dénombrable. Grâce à la propriété \ref{propriete_classe_exces}, le support de $c$ est un ensemble pré-consistant de points, au sens de la définition introduite dans la section \ref{sous-section_voc}. 
\begin{rmq}\label{propriete_classe_carre}
   	Par définition, une classe $c\in\orbl$ appartient à $\H$ et possède donc un degré supérieur ou égal à $1$ et strictement supérieur à chacune de ses multiplicités  : \[1=c^2=n^2-\sum\limits_{p\in\B(\P^2)}\lambda_p^2\leq n^2-\lambda_q^2 \text{ pour tout } q\in\B(\P^2).\]
\end{rmq}
Notons $\PM_{>0}$ le sous-espace de l'espace de Picard-Manin constitué des classes d'auto-intersection strictement positive. Une classe de $\PM_{>0}$ satisfaisant les points \ref{propriete_coeff_classe}, \ref{propriete_classe_canonique}, \ref{propriete_classe_exces} et \ref{propriete_classe_bezout} de la définition \ref{proprietes_c} est dite \emph{proportionnelle} à une classe de $\orbl$. Une telle classe appartient à la demi-droite ouverte issue de la classe nulle et passant par une classe de $\orbl$. Plus généralement, étant donné un sous-ensemble $\mathcal{F}$ de l'espace de Picard-Manin, une classe d'auto-intersection strictement positive est dite proportionnelle à une classe de $\mathcal{F}$ si elle appartient à la demi-droite ouverte issue de la classe nulle et passant par une classe de $\mathcal{F}$. 
 L'application de normalisation :
 \begin{align}\label{application_normalisation}
\n : \PM_{>0}&\to\H \\
x &\mapsto \frac{x}{\sqrt{\mathcal{B}(x,x)}}\notag.
\end{align}
envoie toute classe de $\PM_{>0}$ sur sa classe proportionnelle dans $\H$. 

\begin{prop}\label{action_cremona_orbl}
L'action du groupe de Cremona stabilise $\orbl$.
\end{prop}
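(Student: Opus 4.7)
Mon plan serait de vérifier successivement les quatre conditions de la définition \ref{proprietes_c} pour $\isome{f}(c)$, sachant que $c\in\orbl$ et $f\in\Bir(\P^2)$. Puisque l'action préserve la forme d'intersection et que $\isome{f}(c)\cdot\l\geq 1$ (ce qui a déjà été observé à la sous-section \ref{sous-section_hyperobloide_PM}), on a déjà $\isome{f}(c)\in\H$. La condition \ref{propriete_classe_canonique} (positivité contre l'anti-canonique) est immédiate grâce à la remarque \ref{rmq_canonique_stable_orbite} : $\fcan\cdot\isome{f}(c)=\fcan\cdot c\leq 0$.

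L'idée directrice pour les trois autres conditions est que chacune exprime la positivité d'un produit d'intersection $c\cdot V\geq 0$ pour $V$ dans une famille spécifique de classes, et l'isométrie donne $\isome{f}(c)\cdot V=c\cdot \isome{f^{-1}}(V)$. Il suffit donc de montrer que $\isome{f^{-1}}$ envoie chaque classe $V$ de la famille concernée sur une classe (ou une combinaison positive de telles) contre laquelle $c$ est déjà connu pour être positif. Pour la condition \ref{propriete_coeff_classe}, avec $V=e_p$ : si $p\notin\Bs(f^{-1})$, la remarque \ref{rmq_action_cremona_sur_classe_exceptionnelles} appliquée à $f^{-1}$ donne $\isome{f^{-1}}(e_p)=e_{p'}$ pour un certain $p'\in\B(\P^2)$ et le produit vaut $\lambda_{p'}\geq 0$ par la condition \ref{propriete_coeff_classe} pour $c$ ; si au contraire $p=q_i$ est un point-base de $f^{-1}$, alors l'équation \eqref{action_f_sur_pt_base} appliquée à $f^{-1}$ donne $\isome{f^{-1}}(e_{q_i})=m_i'\l-\sum_j a_{i,j}e_{p_j}$, qui est la classe de la transformée stricte d'une courbe irréductible de $\P^2$ contractée par $f$ sur $q_i$ (remarque \ref{rmq_systeme_lineaire_action_sur_l_pt_base} appliquée à $f^{-1}$), et le produit avec $c$ est positif par la condition de Bézout \ref{propriete_classe_bezout} pour $c$. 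Pour la condition \ref{propriete_classe_bezout}, un argument analogue montre que $\isome{f^{-1}}$ envoie la classe $d'\l-\sum\mu_pe_p$ de la transformée stricte d'une courbe $C'\subset\P^2$ sur la classe de la transformée stricte de $f(C')$, laquelle reste de la forme $d''\l-\sum\nu_{p'}e_{p'}$ avec $\nu_{p'}\geq 0$, ce qui permet d'invoquer la condition \ref{propriete_classe_bezout} pour $c$.

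Le point le plus technique sera la condition \ref{propriete_classe_exces}, où la classe à transformer est $V=e_p-\sum_{q\to p}e_q$, c'est-à-dire la classe de la transformée stricte du diviseur exceptionnel au-dessus de $p$. Si $p$ et tous ses points adhérents sont hors du lieu de base de $f^{-1}$, la remarque \ref{rmq_action_cremona_sur_classe_exceptionnelles} et le fait que l'application induite préserve localement l'adhérence donnent $\isome{f^{-1}}(V)=e_{p'}-\sum_{q'\to p'}e_{q'}$ pour $p'=\phi^{-1}(p)$, ce qui redonne la condition \ref{propriete_classe_exces} pour $c$. Dans le cas général, il faut distinguer selon que $p$ et certains de ses points adhérents sont ou non des points-base de $f^{-1}$, et combiner les formules \eqref{action_f_sur_l} et \eqref{action_f_sur_pt_base} pour exprimer $\isome{f^{-1}}(V)$ comme une combinaison positive de classes de transformées strictes de courbes ou de diviseurs exceptionnels, puis conclure en invoquant une combinaison des conditions \ref{propriete_coeff_classe}, \ref{propriete_classe_exces} et \ref{propriete_classe_bezout} pour $c$. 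C'est précisément cette analyse au cas par cas, où il faut suivre la manière dont la configuration d'adhérences se transforme sous l'action de $f^{-1}$, qui constitue l'obstacle principal ; une alternative est de se ramener aux générateurs en invoquant le théorème \ref{thm_caraminana} et de traiter séparément le cas $\PGL(3,\kk)$ (trivial par permutation des points) et celui d'une application de Jonquières de caractéristique $(d;d-1,1^{2d-2})$, pour laquelle les formules \eqref{lemme_action_jonq_l} permettent un calcul direct.
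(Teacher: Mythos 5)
Votre stratégie générale — reformuler chacune des quatre conditions comme une positivité d'intersection $c\cdot V\geq 0$ puis utiliser $\isome{f}(c)\cdot V=c\cdot\isome{(f^{-1})}(V)$ — est le bon mécanisme, et les conditions \ref{propriete_coeff_classe} et \ref{propriete_classe_canonique} sont correctement traitées. Mais il y a un vrai trou : pour la condition \ref{propriete_classe_exces} (et, en partie, pour \ref{propriete_classe_bezout} et pour le cas où $p$ est un point-base de $f^{-1}$ dans la condition \ref{propriete_coeff_classe}), vous décrivez l'analyse au cas par cas comme « l'obstacle principal » sans la mener. Or c'est précisément là que réside toute la difficulté : pour un $f$ général dont les points-base peuvent être infiniment proches, l'identification de $\isome{(f^{-1})}(e_{q_i})=m_i'\l-\sum_j a_{i,j}e_{p_j}$ avec la classe d'une véritable courbe de $\P^2$ munie des bonnes multiplicités (nécessaire pour invoquer la condition de Bézout de la définition \ref{proprietes_c}, qui ne porte que sur des courbes de $\P^2$) n'est pas automatique, et le suivi des relations d'adhérence à travers $\isome{(f^{-1})}$ pour la classe $e_p-\sum_{q\to p}e_q$ est exactement le point délicat. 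Un énoncé dont la preuve se réduit à « il faut distinguer des cas et conclure » n'est pas démontré.

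Le papier court-circuite entièrement cette difficulté : par le théorème de Noether--Castelnuovo (Théorème \ref{thm_générateurs_Cremona}), il suffit de vérifier les quatre conditions pour les automorphismes (trivial) et pour la seule involution quadratique standard $\sigma$, dont les trois points-base sont dans $\P^2$ et dont les courbes contractées sont les trois droites de coordonnées ; chaque vérification devient alors un calcul explicite court (en particulier, l'excès au-dessus de $q_0$ se contrôle par la condition de Bézout appliquée à la droite passant par $p_1$ et $p_2$). Votre « alternative » finale va dans cette direction mais choisit la décomposition en applications de Jonquières via le théorème \ref{thm_caraminana}, ce qui est nettement plus coûteux : une application de Jonquières générale peut avoir des points-base infiniment proches organisés en tours, et les formules \eqref{lemme_action_jonq_l} ne donnent que l'action sur $\l$ et sur les classes des points-base — elles ne suffisent pas à vérifier « directement » la positivité des excès ni la condition de Bézout pour $\isome{\j}(c)$. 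Je vous recommande de remplacer votre réduction par celle à $\PGL(3,\kk)$ et $\sigma$ seuls, puis d'écrire les quatre vérifications explicitement pour $\sigma$.
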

\begin{proof}
	Le groupe de Cremona étant engendré par $\PGL(3,\kk)$ et l'involution standard $\sigma$ (Théorème \ref{thm_générateurs_Cremona}) il suffit de vérifier que l'action de ces applications préserve $\orbl$. 
	C'est le cas pour les automorphismes. Soit \[c=n\l-\sum\limits_{p\in\B(\P^2)}\lambda_pe_{p}\in\orbl.\] Montrons que $\isome{\sigma}(c)$ appartient à $\orbl$.	
	Notons $p_0$, $p_1$ et $p_2$ les points-base de $\sigma$ (qui sont en fait $[0:0:1]$, $[0:1:0]$ et $[1:0:0]$) et $q_0$, $q_1$ et $q_2$ ceux de $\sigma^{-1}$. Bien que $\sigma$ soit une involution nous les différencions pour plus de clarté. 
	D'après l'équation \eqref{eq_action_f_sur_c}, nous avons :
	\begin{multline*}
	\isome{\sigma}(c)=(2n-\lambda_{p_0}-\lambda_{p_1}-\lambda_{p_2})\l-(n-\lambda_{p_1}-\lambda_{p_2})e_{q_0} -(n-\lambda_{p_0}-\lambda_{p_2})e_{q_1}\\-(n-\lambda_{p_0}-\lambda_{p_1})e_{q_2}-\sum\limits_{\substack{p\in\B(\P^2)\\ p\notin\{p_0,p_1,p_2\}}}\lambda_p\isome{\sigma}(e_{p}).
	\end{multline*} 
	Le groupe de Cremona agit sur $\H$ (voir le paragraphe \ref{sous-section_hyperobloide_PM}) par conséquent la classe $\isome{\sigma}(c)$ appartient à $\H$ et possède donc un degré strictement positif. Intéressons-nous aux autres points de la définition. 
	\begin{enumerate}[wide]
		\item Comme $c\in\orbl$ les multiplicités $\lambda_p$ sont positives ou nulles. Montrons que c'est le cas des multiplicités de $\isome{\sigma}(c)$ pour les points $q_0$, $q_1$ et $q_2$. Les points $p_0$ et $p_1$ sont dans $\P^2$ il existe donc une droite $L$ passant par ces deux points et comme $c$ satisfait la condition de Bézout (Définition \ref{proprietes_c}.\ref{propriete_classe_bezout}), nous avons :
		\[n-\lambda_{p_0}-\lambda_{p_1}\geq 0.\] Les deux autres multiplicités sont positives avec le même argument.
		\item La condition de positivité contre l'anti-canonique est satisfaite puisque la forme canonique est constante sur l'orbite de $c$ sous l'action du groupe de Cremona (Remarque \ref{rmq_canonique_stable_orbite}).

		\item Montrons que les excès de $\isome{\sigma}(c)$ sont positifs en tout point $q\in\B(\P^2)$. Soit $q\in\B(\P^2)$. Deux cas se présentent selon si $q$ est un point-base de $\sigma^{-1}$ ou pas. 
		
		Dans le cas où $q$ n'est pas un point-base de $\sigma^{-1}$, il existe $p\in\B(\P^2)$ tel que $e_p=\isome{\sigma}^{-1}(e_q)$. Quitte à éclater au départ et à l'arrivée, $\sigma^{-1}$ induit un isomorphisme local envoyant $q$ sur $p$ et par conséquent les points adhérents à $q$ sont envoyés sur des points adhérents à $p$ et donc la positivité des excès est préservée. 
		
		Il nous reste à considérer les excès au dessus des points $q_0$, $q_1$ et $q_2$. Notons $S$ la surface obtenue en éclatant les points $p_0$, $p_1$ et $p_2$. Les points adhérents à $q_0$ sont les points appartenant à la transformée stricte de la droite $L$ passant par les points $p_1$ et $p_2$ sur $S$. Ainsi la droite $L$ passe par les points $p_1$ et $p_2$ ainsi que par les points adhérents à $q_0$. Par conséquent en considérant cette droite et le fait que $c$ satisfait la condition de Bézout (Définition \ref{proprietes_c}.\ref{propriete_classe_bezout}), nous avons : \[(n-\lambda_1-\lambda_2)-\sum\limits_{p\mapsto q_0}\lambda_p=n-(\lambda_1+\lambda_2+\sum\limits_{p\mapsto q_0}\lambda_p)\underset{\ref{proprietes_c}.\ref{propriete_classe_bezout}}{\geq} 0.\] Ainsi, l'excès en $q_0$ est positif. Il en est de même pour les points $q_1$ et $q_2$. 
		\item Montrons que $\isome{\sigma}(c)$ satisfait la condition de Bézout pour toute courbe de $\P^2$. 
		Considérons dans un premier temps une courbe $C_1$ qui n'est contractée par $\sigma^{-1}$. Notons $c_1$ sa classe dans Picard-Manin. Dans ce cas, la classe $\isome{\sigma}^{-1}(c_1)$ est la classe de Picard-Manin correspondant à la courbe de $\P^2$ obtenue comme adhérence de $\sigma^{-1}(C_1\setminus\{q_0,q_1,q_2\})$. Par conséquent son nombre d'intersection avec $c$ est positif ou nul puisque $c$ satisfait la condition de Bézout :
		\[\isome{\sigma}(c)\cdot c_1 =c\cdot \isome{\sigma}^{-1}(c_1)\underset{\ref{proprietes_c}.\ref{propriete_classe_bezout}}{\geq} 0.\]
		Considérons à présent les droites contractées par $\sigma^{-1}$ comme par exemple la droite passant par les points $q_1$ et $q_2$. Alors nous obtenons :
		\[2n-\lambda_{p_0}-\lambda_{p_1}-\lambda_{p_2}-(n-\lambda_{p_0}-\lambda_{p_2}) -(n-\lambda_{p_0}-\lambda_{p_1})=\lambda_{p_0}\geq 0.\]
		Il en est de même des deux autres droites contractées.

		Par conséquent $\isome{\sigma}(c)$ satisfait également la condition de Bézout (Définition \ref{proprietes_c}.\ref{propriete_classe_bezout}).
	\end{enumerate}
	Nous avons ainsi montré que l'action de $\sigma$ préserve $\orbl$ et par conséquent que l'action du groupe de Cremona stabilise $\orbl$.
\end{proof}

\begin{prop}\label{prop_orbl_convexe}
   	L'espace $\orbl$ est un sous-espace convexe fermé de $\H$. 
\end{prop}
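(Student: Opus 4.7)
L'approche proposée consiste à observer que les quatre conditions définissant $\orbl$ dans la Définition \ref{proprietes_c} sont toutes linéaires en la classe $c$. Nous noterons $K \subset \PM$ l'ensemble des classes vérifiant ces quatre conditions, intersecté avec $\PM_{>0}$. On aura alors $\orbl = K \cap \H$, et la convexité géodésique de $\orbl$ dans $\H$ ainsi que sa fermeture découleront presque formellement de la convexité linéaire de $K$.

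Dans un premier temps, nous expliciterons ces inégalités linéaires. La condition \ref{propriete_coeff_classe} s'écrit $\lambda_p(c) = c \cdot e_p \geq 0$, une par point $p \in \B(\P^2)$. La condition \ref{propriete_classe_canonique} équivaut à la famille d'inégalités $3n - \sum_{p \in F} \lambda_p \geq 0$ indexée par les parties finies $F \subset \B(\P^2)$ (cette famille force au passage la sommabilité $L^1$ dès que $n$ est fini). La condition \ref{propriete_classe_exces} est, elle aussi, une famille d'inégalités indexée par les points. Enfin la condition \ref{propriete_classe_bezout}, pour une courbe fixée, ne fait intervenir qu'une combinaison finie des coefficients puisque les multiplicités $\mu_p$ sont presque toutes nulles. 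Ainsi $K$ est une intersection de demi-espaces fermés, donc un cône convexe fermé.

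Pour la convexité géodésique, nous prendrons $c_1, c_2 \in \orbl$ et utiliserons le fait que le segment géodésique qui les joint dans $\H$ s'obtient comme image par l'application de normalisation $\n$ des combinaisons convexes $c_t = (1-t)c_1 + t c_2$, $t \in [0,1]$. L'inégalité inverse de Cauchy--Schwarz valable sur la nappe positive d'un espace de signature $(1,\infty)$, à savoir $c_1 \cdot c_2 \geq 1$, donne $c_t \cdot c_t \geq 1 > 0$, si bien que $\n(c_t)$ est bien défini. Comme $K$ est un cône convexe contenant $c_1$ et $c_2$, la classe $c_t$ appartient à $K$, et la multiplication par le scalaire positif $(c_t \cdot c_t)^{-1/2}$ la maintient dans $K$ ; d'où $\n(c_t) \in K \cap \H = \orbl$.

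Pour la fermeture, nous partirons d'une suite $(c_n)$ de $\orbl$ convergeant vers $c \in \H$ (les topologies hyperbolique et induite coïncidant sur $\H$). Chaque coefficient $\lambda_p(c_n) = c_n \cdot e_p$ converge vers $\lambda_p(c)$ par continuité du produit scalaire, ce qui fournit immédiatement \ref{propriete_coeff_classe}, ainsi que \ref{propriete_classe_bezout}, cette dernière ne mettant en jeu, pour une courbe fixée, qu'une somme finie de coefficients. La principale difficulté anticipée concerne les sommes potentiellement infinies apparaissant dans \ref{propriete_classe_canonique} et \ref{propriete_classe_exces} : nous comptons la surmonter par le lemme de Fatou appliqué aux termes positifs $\lambda_p$, qui fournit par exemple $\sum_p \lambda_p(c) \leq \liminf_n \sum_p \lambda_p(c_n) \leq 3 \liminf_n n(c_n) = 3 n(c)$, et analoguement pour l'excès $\lambda_p(c) \geq \sum_{q \to p} \lambda_q(c)$ en chaque point $p$.
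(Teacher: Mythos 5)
Votre démonstration est correcte et suit pour l'essentiel la même stratégie que celle de l'article : les quatre conditions de la définition de $\orbl$ découpent un convexe fermé de l'espace de Picard-Manin (intersection de demi-espaces fermés, les conditions faisant intervenir des sommes infinies étant traitées soit par réduction à des sous-sommes finies comme dans l'article, soit, comme vous le faites pour la fermeture, par le lemme de Fatou), et l'intersection avec $\H$ est alors fermée et géodésiquement convexe. Votre vérification explicite de la convexité géodésique, via la normalisation des combinaisons convexes et l'inégalité $c_1\cdot c_2\geq 1$, ne fait que détailler un point que l'article laisse implicite.
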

\begin{proof}
Les conditions \ref{propriete_coeff_classe} et \ref{propriete_classe_bezout} définissent des demi-espaces fermés de l'espace de Picard-Manin. 
Comme les multiplicités des classes de $\orbl$ sont positives d'après la condition \ref{propriete_coeff_classe}, la condition \ref{propriete_classe_canonique} se réécrit de la façon suivante :  
\[\text{ pour toute partie finie } F \text{ de } \B(\P^2),\ \ \sum\limits_{p\in F}\lambda_p\leq3n.\]
Par conséquent la condition \ref{propriete_classe_canonique} correspond à une intersection infinie de demi-espaces fermés qui est un espace fermé et convexe. Nous raisonnons pareil pour la condition \ref{propriete_classe_exces}. 
En intersectant ce fermé convexe avec l'hyperboloïde $\H$ nous obtenons que $\orbl$ est fermé et convexe comme annoncé. 
\end{proof}

\begin{prop}
   	L'enveloppe convexe dans $\H$ de l'orbite de $\l$ sous l'action du groupe de Cremona est incluse dans $\orbl$.
\end{prop}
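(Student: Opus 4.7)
La stratégie est très courte et se réduit à combiner les deux propositions précédentes avec une vérification triviale. Voici le plan.

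D'abord, je vérifierais que la classe $\l$ elle-même appartient à $\orbl$. En effet, $\l = 1\cdot\l - \sum_{p\in\B(\P^2)} 0\cdot e_p$, donc $n=1\geq 1$ et toutes les multiplicités $\lambda_p$ sont nulles. Les quatre conditions de la Définition \ref{proprietes_c} sont alors immédiates : positivité des coefficients (condition \ref{propriete_coeff_classe}) car $\lambda_p=0$ ; positivité contre l'anticanonique (condition \ref{propriete_classe_canonique}) puisque $3n-\sum\lambda_p = 3 \geq 0$ ; positivité des excès (condition \ref{propriete_classe_exces}) trivialement ; condition de Bézout (condition \ref{propriete_classe_bezout}) puisqu'elle se réduit à $nd\geq 0$ pour toute courbe de degré $d$.

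Ensuite, par la Proposition \ref{action_cremona_orbl}, l'action du groupe de Cremona stabilise $\orbl$. Comme $\l\in\orbl$, toute l'orbite $\{\isome{f}(\l)\mid f\in\Bir(\P^2)\}$ est contenue dans $\orbl$.

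Enfin, la Proposition \ref{prop_orbl_convexe} assure que $\orbl$ est un sous-espace convexe (fermé) de $\H$. L'enveloppe convexe de l'orbite de $\l$ est, par définition, le plus petit convexe contenant cette orbite ; elle est donc incluse dans $\orbl$, ce qui conclut.

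Il n'y a pas d'obstacle sérieux ici : toute la substance est déjà dans la stabilité de $\orbl$ sous l'action de Cremona (Proposition \ref{action_cremona_orbl}) et dans sa convexité (Proposition \ref{prop_orbl_convexe}), démontrées juste au-dessus. La présente proposition n'est qu'une reformulation synthétique de ces deux faits, appliquée au point particulier $\l\in\orbl$.
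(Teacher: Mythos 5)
Votre démonstration est correcte et suit exactement la même démarche que celle du papier : vérifier que $\l\in\orbl$, invoquer la Proposition \ref{action_cremona_orbl} pour placer toute l'orbite dans $\orbl$, puis conclure par la convexité de $\orbl$ (Proposition \ref{prop_orbl_convexe}). Rien à redire.
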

\begin{proof}
Comme par la proposition \ref{prop_orbl_convexe} l'espace $\orbl$ est convexe, il suffit de montrer que toutes les classes de l'orbite de $\l$ sous l'action du groupe de Cremona sont dans $\orbl$. De plus comme d'après la proposition \ref{action_cremona_orbl} l'action du groupe de Cremona préserve $\orbl$, il suffit de vérifier que la classe de la droite $\l$ appartient à $\orbl$. 
La classe $\l$ appartient à $\H$ et vérifie le point \ref{propriete_coeff_classe}. La classe de la droite contre la classe anti-canonique vaut $3$ ainsi la positivité contre l'anti-canonique est vérifiée (point \ref{propriete_classe_canonique}).  
Les points \ref{propriete_classe_exces} et \ref{propriete_classe_bezout} sont également vérifiés.
\end{proof}
\begin{rmq} Soit $p\in\P^2$.
  Il existe $t_0\in[0,1[$ tel que pour tout $t_0\leq t <1$ la classe $\frac{1}{\sqrt{1-t^2}}(\l-te_p)$ n'est pas dans l'enveloppe convexe de l'orbite de $\l$ puisque l'intersection de cette classe contre la classe anti-canonique est strictement supérieure à $3$. 
\end{rmq}

\subsection{Pavage}
Montrons que les cellules de Voronoï recouvrent l'espace $\orbl$ (Corollaire \ref{cor_pavage}).

\begin{prop}\label{prop_inf_atteint}Pour toute classe $c$ appartenant à $\orbl$, les deux infimums suivants sont atteints :
\begin{enumerate}
\item\label{item1_pavage} $\inf\big\{\dist(c,\isome{f}(\l))\mid f\in\Bir(\P^2)\big\}$
\item\label{item2_nonacc} $\inf\big\{\dist(c,\isome{f}(\l))\mid f\in\Bir(\P^2) \text{ et }c\notin \V(f)\big\}$.
\end{enumerate}
\end{prop}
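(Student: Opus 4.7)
L'idée est de montrer que toute suite minimisante pour l'un ou l'autre infimum se stabilise finalement dans toutes les données combinatoires pertinentes, ce qui force ses termes ultimes à atteindre l'infimum.

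Écrivons $c = n\l - \sum_p \lambda_p e_p$ avec $n^2 - \sum_p \lambda_p^2 = 1$. Pour $f \in \Bir(\P^2)$ de degré $d$ tel que $\isome{f}(\l) = d\l - \sum_i m_i' e_{q_i}$, nous avons
\[c \cdot \isome{f}(\l) = nd - \sum_i \lambda_{q_i} m_i' \geq nd - \sqrt{(n^2-1)(d^2-1)}\]
par Cauchy--Schwarz et la relation de Noether $\sum_i (m_i')^2 = d^2-1$ (Lemme \ref{lemme_equations_evidentes}). Puisque le membre de droite tend vers $+\infty$ avec $d$, alors que le premier infimum est majoré par $c \cdot \l = n$ (en prenant $f = \id$), toute suite minimisante a ses degrés bornés ; après extraction, on peut supposer que le degré $d$ et la caractéristique $(d;m_0',\dots,m_{r-1}')$ sont constants. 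Comme $\sum_p \lambda_p^2 < +\infty$, l'ensemble $\{p : \lambda_p \geq \eta\}$ est fini pour tout $\eta > 0$ ; une extraction diagonale permet alors de supposer que, pour chaque indice $i$, soit le $i$-ème point-base se stabilise en un $q_i^\ast$ avec $\lambda_{q_i^\ast} > 0$ (indice \emph{lourd}), soit la multiplicité $\lambda$ correspondante tend vers $0$ (indice \emph{léger}).

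L'observation clé est alors que la valeur limite vaut $I = nd - \sum_{i \text{ lourd}} m_i' \lambda_{q_i^\ast}$, et que pour un terme $f$ de la sous-suite d'indice assez grand,
\[c \cdot \isome{f}(\l) = I - \sum_{i \text{ léger}} m_i'\lambda_{q_i} \le I,\]
la somme légère étant positive ou nulle. Comme $I$ est l'infimum, on a aussi $c \cdot \isome{f}(\l) \geq I$, ce qui force l'égalité et annule la somme légère ; ainsi $q_i \notin \supp(c)$ pour tout $i$ léger, et l'infimum est atteint par $f$ lui-même.

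Le second infimum se traite identiquement : une suite minimisante dans $\{f : c \notin \V(f)\}$ a encore des degrés bornés (la borne de Cauchy--Schwarz ne dépend pas de la condition de Voronoï), et la même analyse force l'infimum à être atteint par un terme de la sous-suite, la condition $c \notin \V(f)$ étant préservée par extraction. Le point délicat est la mise en place correcte de l'extraction diagonale, rendue possible par la finitude des ensembles $\{p : \lambda_p \geq \eta\}$.
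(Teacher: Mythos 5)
Votre preuve est correcte et suit essentiellement la même stratégie que celle de l'article : borner le degré le long d'une suite minimisante, se ramener à un nombre fini de caractéristiques, puis exploiter la finitude des ensembles $\{p \mid \lambda_p \geq \eta\}$ pour stabiliser les points-base. Les seules différences sont cosmétiques — vous bornez le degré par Cauchy--Schwarz là où l'article utilise l'inégalité triangulaire dans la boule de rayon $\argcosh(n)$, et vous concluez par un encadrement direct ($\leq I$ et $\geq I$) là où l'article raisonne par l'absurde sur une suite strictement décroissante.
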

\begin{proof}
	Soit $c=n\l-\sum\limits_{p\in\B(\P^2)}\lambda_pe_p$ une classe de $\orbl$. Montrons que l'infimum suivant est atteint : 
	\begin{align}\label{eq_inf}
	\inf\big\{\dist(c,\isome{f}(\l))\mid f\in\Bir(\P^2)\big\}.
	\end{align}
	
	Considérons les classes de l'orbite de la droite $\l$ se trouvant dans la boule fermée de $\orbl$ centrée en $c$ et de rayon $\argcosh (n)$. La classe $\l$ est dans cette boule. De plus, par l'inégalité triangulaire, toute application $f$ telle que la classe $\isome{f}(\l)$ se situe dans cette boule est de degré au plus $\cosh(2\argcosh(n))=2n^2-1$. Notons $B$ le sous-ensemble du groupe de Cremona constitué des applications de degré au plus $2n^2-1$.
	Il n'y a qu'un nombre fini de caractéristiques pour des applications du groupe de Cremona de degré au plus $2n^2-1$. Remarquons que si le support de $c$ est fini alors l'infimum est atteint. Nous considérons donc le cas où $c$ est de support infini. Notons $C_1,\dots,C_k$ les caractéristiques des éléments de $B$. Pour $1\leq j\leq k$, posons $C_j=(d^j;m_0^j,\dots,m_{r_j}^j)$. Notons $B_j$ l'ensemble des applications de $B$ ayant $C_j$ pour caractéristique.
	Montrons que pour tout $1\leq j\leq k$
	\begin{align}\label{eq_inf_j}
	\inf\big\{\dist(c,\isome{f}(\l))\mid f\in B_j\big\}
	\end{align}
	est atteint. Rappelons que pour tout $1\leq j\leq k$ et pour toute application $f\in B_j$, $\dist(c,\isome{f}(\l))\leq \argcosh (n)$. Fixons $1\leq j\leq k$.
	Raisonnons par l’absurde et supposons qu'il existe une suite $(f_i)_{i\in\N}$ d'applications de $B_j$ telle que la suite $(\dist(c,\isome{{f_i}}(\l)))_{i\in \N}$ soit strictement décroissante. Pour chaque $i$, nous regardons le point-base $q$ de $f_i^{-1}$ de multiplicité $m_0^j$. Nous notons $\lambda_{0,i}$ la multiplicité $\lambda_q$ de $c$ associée au point $q$. À noter que si $q$ n'est pas un point du support de $c$, la multiplicité $\lambda_{0,i}$ est nulle. Nous faisons de même pour chaque $0\leq s \leq r_j$. Nous obtenons ainsi $r_j+1$ suites $\{\lambda_{s,i}\}_{i\in\N}$ de coefficients. Si nous l'ordonnons, la suite des multiplicités de $c$ est décroissante et tend vers $0$. Par conséquent quitte à prendre des sous-suites nous pouvons supposer que chacune des $r_j+1$ suites est ou bien constante ou bien strictement décroissante. 
	Or la suite $(\dist(c,\isome{{f_i}}(\l)))_{i\in \N}$ étant strictement décroissante cela implique que la suite $S_i=\sum\limits_{s=0}^{r_j}m_s\lambda_{s,i}$ est strictement croissante ce qui est absurde.
	Par conséquent, pour tout $1\leq j\leq k$ il existe une application $f^j\in B_j$ pour laquelle l'infimum \eqref{eq_inf_j} est atteint :	\[\dist(c,\isome{f}^j(\l))=\inf\big\{\dist(c,\isome{f}(\l))\mid f\in B_j\big\}.\]
	Notons $j_0\in\{1,\dots,k\}$ un indice tel que \[\dist(c,\isome{f}^{j_0}(\l))= \underset{1\leq j\leq k}{\min}\dist(c,\isome{f}^j(\l)).\]
	L'infimum \eqref{eq_inf} est atteint pour $f^{j_0}$.

	Le point \ref{item2_nonacc} se démontre de la même façon.
	\end{proof}
	\begin{cor}\label{cor_pavage}
	Les cellules de Voronoï pavent l'espace $\orbl$.
	\end{cor}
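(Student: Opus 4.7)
Le corollaire est une conséquence presque immédiate de la proposition \ref{prop_inf_atteint}. Voici le plan que je suivrais.

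Soit $c\in\orbl$ une classe arbitraire. Il s'agit de montrer qu'il existe $f\in\Bir(\P^2)$ tel que $c\in\V(f)$, autrement dit que $c$ appartient à au moins une cellule de Voronoï. Par le point \ref{item1_pavage} de la proposition \ref{prop_inf_atteint}, l'infimum
\[\inf\{\dist(c,\isome{g}(\l))\mid g\in\Bir(\P^2)\}\]
est atteint : il existe donc $f\in\Bir(\P^2)$ vérifiant
\[\dist(c,\isome{f}(\l))=\inf\{\dist(c,\isome{g}(\l))\mid g\in\Bir(\P^2)\}.\]

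En revenant à la définition de la cellule de Voronoï donnée en début de section, on a pour un tel $f$ :
\[\dist(c,\isome{f}(\l))\leq \dist(c,\isome{g}(\l))\ \text{ pour tout }g\in\Bir(\P^2),\]
ce qui signifie précisément que $c\in\V(f)$. On en conclut
\[\orbl=\bigcup_{f\in\Bir(\P^2)}\V(f).\]

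Il n'y a donc véritablement aucun obstacle : toute la difficulté a été concentrée dans la démonstration de la proposition \ref{prop_inf_atteint}, où le contrôle du cas de support infini d'une classe $c$ exigeait l'argument de monotonie sur les multiplicités ordonnées. Une fois ce résultat établi, le corollaire s'obtient en une ligne en appliquant la définition d'une cellule de Voronoï au minimiseur fourni par la proposition.
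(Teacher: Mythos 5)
Votre démonstration est correcte et suit exactement la même démarche que celle de l'article : on applique le point \ref{item1_pavage} de la proposition \ref{prop_inf_atteint} pour obtenir un minimiseur $f$, et la définition de $\V(f)$ donne immédiatement $c\in\V(f)$. Vous explicitez simplement un peu plus la dernière étape, mais le contenu est identique.
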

	\begin{proof}
	Soit $c\in\orbl$. Le point \ref{item1_pavage} de la proposition \ref{prop_inf_atteint} implique qu'il existe $f\in\Bir(\P^2)$ telle que $c$ appartient à $\V(f)$.
	\end{proof}

\subsection{Non-accumulation des cellules}
Dans cette partie, nous montrons que les cellules de Voronoï ne s'accumulent pas sur cellule : pour tout $f\in\Bir(\P^2)$, pour toute classe $c\in\V(f)$, il existe $\epsilon>0$ tel que les cellules de Voronoï ne contenant pas $c$ mais tel que $c$ soit dans leur $\epsilon$-voisinage sont en nombre fini. 
 La condition \og ne contenant pas $c$ \fg{} est nécessaire car nous verrons que dans certains cas $c$ appartient à un nombre infini de cellules. 

\begin{prop}\label{prop_discret}
Pour tout $c\in\orbl$, il existe $\epsilon>0$ tel que pour tout $f\in\Bir(\P^2)$ soit $c$ appartient à $\V(f)$, soit la distance entre $c$ et $\V(f)$ est strictement supérieure à $\epsilon$ : \[\dist(c,\V(f))> \epsilon.\]
\end{prop}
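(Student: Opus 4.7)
Ma stratégie est d'exploiter directement les deux infimums atteints fournis par la proposition \ref{prop_inf_atteint}. Fixons $c\in\orbl$ et posons $\delta=\min\{\dist(c,\isome{f}(\l))\mid f\in\Bir(\P^2)\}$, atteint par une certaine application $f_0$ d'après le point \ref{item1_pavage}. Alors $c\in\V(f_0)$, et l'énoncé est trivial si $c$ appartient à toutes les cellules de Voronoï (n'importe quel $\epsilon$ convient). Sinon, j'introduis
\[\delta'=\min\{\dist(c,\isome{f}(\l))\mid f\in\Bir(\P^2),\ c\notin\V(f)\},\]
qui est atteint d'après le point \ref{item2_nonacc}, et qui vérifie $\delta'>\delta$ strictement : en effet, la condition $c\notin\V(f)$ force $\dist(c,\isome{f}(\l))>\delta$, et le fait que le second infimum soit \emph{atteint} transporte l'inégalité stricte de chaque terme à l'infimum.

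Je pose alors $\epsilon=(\delta'-\delta)/3$ et je vérifie que ce choix convient. Je raisonne par l'absurde en supposant qu'il existe $f\in\Bir(\P^2)$ avec $c\notin\V(f)$ et $\dist(c,\V(f))\leq\epsilon$. Pour tout $\eta>0$ je choisis $c'\in\V(f)$ avec $\dist(c,c')\leq\epsilon+\eta$. L'appartenance $c'\in\V(f)$ donne par définition $\dist(c',\isome{f}(\l))\leq\dist(c',\isome{f_0}(\l))$ ; j'estime ensuite ce second terme par l'inégalité triangulaire via $\delta=\dist(c,\isome{f_0}(\l))$, puis je reviens à $c$ par une seconde inégalité triangulaire, pour obtenir
\[\dist(c,\isome{f}(\l))\leq 2(\epsilon+\eta)+\delta.\]
En faisant tendre $\eta$ vers $0$, il vient $\dist(c,\isome{f}(\l))\leq 2\epsilon+\delta<\delta'$, ce qui contredit la définition de $\delta'$.

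L'étape cruciale du plan, et la seule qui ne soit pas une formalité de triangulation, est la séparation stricte $\delta'>\delta$. Sans le caractère atteint du second infimum, on n'obtiendrait qu'une inégalité large $\delta'\geq\delta$, insuffisante pour conclure : c'est précisément le point \ref{item2_nonacc} de la proposition \ref{prop_inf_atteint}, déjà établi, qui rend cette preuve courte. Le reste n'est qu'un jeu d'inégalités triangulaires dans l'espace métrique $\orbl$.
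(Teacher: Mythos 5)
Votre preuve est correcte et suit essentiellement la même démarche que celle de l'article : on utilise le fait que l'infimum du point \ref{item2_nonacc} de la proposition \ref{prop_inf_atteint} est atteint pour obtenir un écart strict entre la distance de $c$ à son centre le plus proche et la distance de $c$ aux centres des cellules ne le contenant pas, on pose $\epsilon$ comme une fraction de cet écart, et on conclut par inégalités triangulaires. Les seules différences sont cosmétiques (l'article se ramène à $c\in\V(\id)$ par l'action du groupe, ce qui fait de $\l$ le centre le plus proche et donne $\delta=\argcosh(n)$, et prend $1/6$ de l'écart au lieu de $1/3$).
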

La notation \og$\dist(c,\V(f))$ \fg\ est un abus de notation et signifie l'infimum sur toutes les classes $c'$ de $\V(f)$ des distances entre $c$ et $c'$ : \[\dist(c,\V(f))=\inf\{\dist(c,c')\mid c'\in\V(f)\}.\]
\begin{proof}
Quitte à faire agir le groupe de Cremona sur le pavage de Voronoï, nous pouvons supposer que $c$ appartient à la cellule $\V(\id)$.

Si la classe $c$ est à l'intérieur (pour le pavage de Voronoï) de $\V(\id)$ c'est-à-dire si $c$ n'appartient à aucune autre cellule de Voronoï, tout $\epsilon$ strictement positif tel que la boule fermée centrée en $c$ et de rayon $\epsilon$ est incluse dans l'intérieur (pour le pavage) de $\V(\id)$ convient. Intéressons-nous à présent aux classes qui sont au bord de la cellule $\V(\id)$, c'est-à-dire aux classes qui sont à l'intersection de plusieurs cellules de Voronoï dont la cellule $\V(\id)$.
Soit $c=n\l-\sum\limits_{p\in\B(\P^2)}\lambda_pe_p\in\orbl$ une classe appartenant au bord de la cellule $\V(\id)$. D'après le point \ref{item2_nonacc} de la proposition \ref{prop_inf_atteint}, l'infimum suivant est atteint \[D:=\inf\{\dist(c,\isome{f}(\l))\mid f\in\Bir(\P^2) \text{ et }c\notin \V(f)\}.\]  Posons \[\epsilon=\frac{D-\argcosh(n)}{6}.\]  
Comme $c$ appartient à $\V(\id)$, $\epsilon$ est strictement positif.
Montrons que pour toute application $f\in\Bir(\P^2)$ telle que $c$ n'appartient pas à $\V(f)$, nous avons :
\[\dist(c,\V(f))>\epsilon.\]
Raisonnons par l'absurde et supposons qu'il existe $f\in\Bir(\P^2)$ telle que $c$ n'appartient pas à $\V(f)$ et qu'il existe $c'\in\V(f)$ tel que $\dist(c,c')\leq \epsilon$. 
Nous avons alors par définition de $D$ et par l'inégalité triangulaire la contradiction suivante : 
\begin{align*}
D\leq \dist(c,\isome{f}(\l))\leq \dist(c,c')+\dist(c',\isome{f}(\l))&\leq \epsilon +\dist(c',\l)\\ 
&\leq \epsilon +\dist(c',c)+\dist(c,\l)\\
&\leq 2\epsilon+\argcosh(n)\\
&= \frac{D}{3}+\frac{2\argcosh(n)}{3}\\
&< \frac{D}{3}+\frac{2D}{3}=D.\qedhere
\end{align*}
\end{proof}
Nous disons qu’un segment de $\orbl$ \emph{traverse} une cellule de Voronoï s’il existe un
sous-segment contenu dans l'intérieur (pour le pavage) de cette cellule. Le corollaire suivant dit qu'un segment ne traverse qu'un nombre fini de cellules de Voronoï. Plus généralement, il donne un analogue de ce résultat lorsque le segment reste dans le bord de Voronoï de plusieurs cellules.
\begin{cor}\label{cor_nbre_fini_cell_traversees}
Soit $[c,c']$ un segment géodésique de $\orbl$. Toute suite $(c_i)_{i\in I}$ de classes du segment $[c,c']$ satisfaisant les trois points suivants est finie : 
\begin{itemize}
\item $c_0=c$,
\item $c_i<c_{i+1}$ et
\item pour tout $i\geq 1$ il existe $f_i\in\Bir(\P^2)$ satisfaisant $c_i\in \V(f_i)$ et $c_{i-1}\notin \V(f_i)$.
\end{itemize}
En particulier, un segment géodésique ne traverse qu'un nombre fini de cellules de Voronoï.
\end{cor}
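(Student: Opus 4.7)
L'approche est un raisonnement par l'absurde. Supposons la suite $(c_i)_{i \geq 0}$ infinie. Comme elle est strictement croissante sur le segment géodésique $[c,c']$ de longueur finie, elle converge vers un point $c^* \in [c,c']$.

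Appliquons la proposition \ref{prop_discret} au point $c^*$ : il existe $\epsilon > 0$ tel que pour tout $f \in \Bir(\P^2)$, soit $c^* \in \V(f)$, soit $\dist(c^*, \V(f)) > \epsilon$. Notons $\mathcal{C}^* := \{\bar f : c^* \in \V(f)\}$ et plus généralement $\mathcal{C}(c) := \{\bar f : c \in \V(f)\}$. On en déduit la propriété de \emph{stabilité} suivante : si $\dist(c, c^*) < \epsilon/2$ et $c \in \V(f)$, alors $\dist(c^*, \V(f)) \leq \dist(c^*, c) < \epsilon$, et donc $c^* \in \V(f)$ ; autrement dit $\mathcal{C}(c) \subset \mathcal{C}^*$. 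En particulier, pour $i$ assez grand, $\mathcal{C}(c_i) \subset \mathcal{C}^*$, et donc $f_i \in \mathcal{C}^*$.

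L'étape clé consiste à montrer que $\mathcal{C}(c_i)$ est en fait \emph{constant} pour $i$ assez grand. Paramétrons la géodésique passant par $c^*$ dans l'espace de Picard--Manin ambiant sous la forme $c(t) = \cosh(t)\, c^* + \sinh(t)\, u$, où $u$ est le vecteur tangent unitaire en $c^*$ orienté de $c$ vers $c'$ ; les classes $c_i$ correspondent alors à des paramètres $t_i < 0$ tels que $t_i \to 0^-$. Pour tout $g \in \mathcal{C}^*$, la distance $d^* := \dist(c^*, \isome{g}(\l))$ est la même (c'est la distance minimale de $c^*$ à l'orbite de $\l$), d'où $c^* \cdot \isome{g}(\l) = \cosh(d^*)$. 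Par bilinéarité,
\[ c(t) \cdot \isome{g}(\l) = \cosh(t)\cosh(d^*) + \sinh(t)\,\bigl(u \cdot \isome{g}(\l)\bigr). \]
Le premier terme est constant en $g$. Ainsi, pour $t < 0$ petit, comme $\sinh(t) < 0$, l'ensemble $\argmin_{g \in \mathcal{C}^*}\, c(t) \cdot \isome{g}(\l)$ est précisément l'ensemble des $g \in \mathcal{C}^*$ qui \emph{maximisent} le réel $u \cdot \isome{g}(\l)$ ; cet ensemble ne dépend pas de $t$. Combiné à l'inclusion $\mathcal{C}(c(t)) \subset \mathcal{C}^*$, ceci entraîne que $\mathcal{C}(c_i)$ est égal à ce sous-ensemble fixe pour tout $i$ assez grand.

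La contradiction en résulte immédiatement. En effet, la propriété définissant la suite impose $f_i \in \mathcal{C}(c_i) \setminus \mathcal{C}(c_{i-1})$, donc $\mathcal{C}(c_i) \neq \mathcal{C}(c_{i-1})$, pour tout $i \geq 1$ ; or nous venons de démontrer que ces ensembles coïncident pour $i$ grand. L'obstacle principal est la préparation de l'argument paramétrique ci-dessus, qui combine la discrétude fournie par la proposition \ref{prop_discret} (pour restreindre l'analyse à $\mathcal{C}^*$, sans avoir à contrôler le nombre potentiellement infini de cellules contenant $c^*$) avec la forme affine explicite des produits scalaires $c \mapsto c \cdot \isome{g}(\l)$ sur l'espace de Picard--Manin, ce qui fait que leur ordre relatif d'un côté de $c^*$ est indépendant de la proximité à $c^*$.
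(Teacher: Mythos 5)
Votre démonstration est correcte, mais elle emprunte une route réellement différente de celle du texte. Le texte procède par un argument de recouvrement : pour chaque point $\gamma(t)$ du segment compact $[c,c']$, la proposition \ref{prop_discret} combinée à la convexité des cellules fournit un voisinage ouvert contenant au plus un terme de la suite, et l'extraction d'un sous-recouvrement fini conclut. Vous exploitez au contraire la monotonie : la suite, croissante sur un segment, converge vers un point $c^*$, la proposition \ref{prop_discret} appliquée en ce seul point garantit que pour $i$ grand toute cellule contenant $c_i$ contient aussi $c^*$, puis le paramétrage $c(t)=\cosh(t)\,c^*+\sinh(t)\,u$ rend $g\mapsto c(t)\cdot\isome{g}(\l)$ affine en $(\cosh t,\sinh t)$ avec des coefficients $\cosh(d^*)$ et $u\cdot\isome{g}(\l)$, de sorte que l'ensemble $\argmin$ sur $\mathcal{C}^*$ est indépendant de $t<0$ ; l'ensemble $\mathcal{C}(c_i)$ est donc constant pour $i$ grand, ce qui contredit $\bar f_i\in\mathcal{C}(c_i)\setminus\mathcal{C}(c_{i-1})$. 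Les deux preuves reposent sur la proposition \ref{prop_discret}, mais la vôtre ne l'invoque qu'au point limite et remplace le recouvrement par une observation structurelle (la stabilité, d'un côté de $c^*$, de l'ordre relatif des produits d'intersection avec les centres des cellules de $\mathcal{C}^*$), ce qui contourne précisément l'étape la plus délicate du texte, à savoir la vérification qu'une boule donnée ne contient qu'un seul terme de la suite. Deux points à expliciter pour être complet : l'identification de $\mathcal{C}(c(t))$ avec $\argmin_{g\in\mathcal{C}^*}c(t)\cdot\isome{g}(\l)$ utilise que $\mathcal{C}(c(t))$ est non vide (corollaire \ref{cor_pavage}) et inclus dans $\mathcal{C}^*$, et la conséquence \og en particulier \fg{} demande encore, comme dans le texte, de produire une suite du type considéré à partir d'un segment qui traverserait une infinité de cellules.
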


\begin{proof}
Paramétrons le segment $[c, c']$ par $\gamma(t)$ pour $t \in [0, 1]$ avec $\gamma(0) = c$ et $\gamma(1) = c'$.
  	Nous construisons une suite de points $c_i=(\gamma(t_i))$ par le procédé de récurrence suivant.
  	Initialisons en posant $t_0 = 0$ et $c = \gamma(0)$.
  	Pour $i \ge 1$, si $t_{i-1} \in [0,1[$ est déjà construit, nous posons 
  	\[ t_i = \inf \{t \in ]t_{i-1},1]\mid \exists f \in \Bir(\P^2), \gamma(t) \in \V(f) \text{ et } \gamma(t_{i-1})\notin \V(f)\}.\]
  	Remarquons que par construction, les classes de cette suite, excepté peut-être le premier et le dernier terme, appartiennent à plusieurs cellules (au moins à deux : une contenant la classe précédente dans la suite et une ne la contenant pas). De plus, pour tout $1\leq i$ et pour toute application $f$ telle que $\V(f)$ contient la classe $c_i$ mais pas la classe $c_{i-1}$, la cellule $\V(f)$ ne contient pas les classes $c_j$ pour $j\leq i-1$ par convexité des cellules de Voronoï.
  	
  	Pour tout $0\leq t\leq 1$, d'après la proposition \ref{prop_discret} et le paragraphe précédent, il existe $\epsilon_t>0$ tel que dans l'intervalle ouvert centré en $\gamma(t)$ et de diamètre $2\epsilon$ il existe au plus une classe de la suite $\{c_i\}_{i\in I}$. Ces ouverts forment un recouvrement du segment géodésique compact $[c,c']$, par conséquent il existe un sous-recouvrement fini. Or chacun de ces ouverts contient au plus un point de la suite. Il y a donc un nombre fini d'éléments dans cette suite.
\end{proof}

\subsection{Bord à l'infini}
Le bord de $\H$, noté $\partial_{\infty}\H$, est constitué des classes d'auto-intersection nulle. Ces classes peuvent être vues comme limites de classes de Picard-Manin vivant dans l'hyperplan affine d'équation $\{\l=1\}$ et proportionnelles à des classes de $\H$.
À noter que le nombre d'intersection entre $c'\in\partial_{\infty}\H$ et $c\in\H$ n'est bien défini que si $c$ est de degré $1$.
\begin{rmq}\label{rmq_limite_intersection_avec_classe_infini}
Si $(c_i)_{i\in\N}$ est une suite de classes de $\H$ convergeant vers une classe $c\in\partial_{\infty}\H$ de degré $1$ alors \[\lim\limits_{i\rightarrow \infty}c\cdot c_i =0.\]
En effet, pour tout $i\in\N$, notons $d_i$ le degré de $c_i$. Ainsi la suite $(d_i)_{i\in\N}$ tend vers l'infini puisque c'est le cas de la distance entre $c_i$ et $\l$. Par conséquent, nous avons : \[\lim\limits_{i\rightarrow \infty}c\cdot c_i =\lim\limits_{i\rightarrow \infty}\frac{1}{d_i}c_i\cdot c_i=\lim\limits_{i\rightarrow\infty}\frac{1}{d_i}=0.\]
\end{rmq}
 Nous définissons \emph{le bord à l'infini} de $\orbl$, noté $\partial_{\infty}\orbl$ comme l'ensemble des classes du bord de $\H$ qui vérifient les points \ref{propriete_coeff_classe} à \ref{propriete_classe_bezout} de la définition \ref{proprietes_c} de $\orbl$. De même que précédemment ces classes sont limites de classes de Picard-Manin vivant dans l'hyperplan affine d'équation $\{\l=1\}$ et proportionnelles à des classes de $\orbl$.

\subsection{Classes symétriques}

Une classe dont les $r$ plus grandes multiplicités sont égales et la $(r+1)$-ème est strictement plus petite est appelée \emph{$r$-symétrique}. Une classe $r$-symétrique est dite \emph{pure} si les $r$ plus grandes multiplicités sont égales et si toutes les autres sont nulles.
\begin{lemme}\label{lemme_symetrique}
	Soit $c$ une classe $r$-symétrique avec la plus grande multiplicité qui est égale au tiers du degré de $c$ : \[c=n\l-\frac{n}{3}\sum\limits_{i=0}^{r-1}e_{p_i}-\sum_{\substack{p\in\B(\P^2)\\p\notin\{p_0,\dots,p_{r-1}\}}}\lambda_pe_{p},\] où $\lambda_p<\frac{n}{3}$ pour tout $p\notin\{p_0,\dots,p_{r-1}\}$. Nous avons alors :\begin{enumerate}[wide]
		\item Si $c\in\orbl$ alors $r\leq 8$. 
		\item Si $c\in\partial_{\infty}\orbl$ alors $r=9$ et $c$ est une classe $9$-symétrique pure.
	\end{enumerate} 
\end{lemme}
\begin{proof} Considérons une telle classe $c$.
	Dans le cas où $c\in\orbl$, si $r\geq 9$ alors nous obtenons la contradiction suivante : \[1=c^2\leq n^2-9\frac{n^2}{9}-\sum_{p\notin\{p_0,\dots,p_{r-1}\}}\lambda_p^2\leq 0.\]
		
	Si maintenant $c\in\partial_{\infty}\orbl$ alors $c^2=0$ et par le même argument, nous obtenons $r\leq 9$. Dans le cas où $r=9$, notons que la classe $c$ est forcément symétrique pure.
	Supposons que $r\leq 8$. Alors il existe $p\notin\{p_0,\dots,p_{r-1}\}$ tel que $\lambda_p>0$ puisque : 
	\begin{equation}\label{eq1}
	n^2-r\frac{n^2}{9}-\sum\limits_{p\notin\{p_0,\dots,p_{r-1}\}}\lambda_p^2=0.
	\end{equation}
	Comme $c$ appartient au bord à l'infini, nous avons par définition :
	\begin{equation}\label{eq2}
	0\leq -\fcan\cdot c= 3n-r\frac{n}{3}-\sum\limits_{p\notin\{p_0,\dots,p_{r-1}\}}\lambda_p.
	\end{equation}
	En multipliant l'égalité \ref{eq1} par $3$, nous avons $3n^2=3r\frac{n^2}{9}+3\sum\limits_{p\notin\{p_0,\dots,p_{r-1}\}}\lambda_p^2$.
	En multipliant l'équation \ref{eq2} par $n$ qui est positif et en remplaçant dans cette équation $3n^2$ par la valeur que nous venons d'obtenir, nous avons 
	\[0\leq 3r\frac{n^2}{9}+3\sum\limits_{p\notin\{p_0,\dots,p_{r-1}\}}\lambda_p^2 - r\frac{n^2}{3}-n\sum\limits_{p\notin\{p_0,\dots,p_{r-1}\}}\lambda_p  =  \sum\limits_{p\notin\{p_0,\dots,p_{r-1}\}}\lambda_p(3\lambda_p-n),\]
	ce qui contredit le fait que $\lambda_p<\frac{n}{3}$ pour tout $p\notin\{p_0,\dots,p_{r-1}\}$. Ainsi $r=9$ comme attendu.
\end{proof}
Remarquons que ce lemme est l'analogue du lemme \ref{lemme_appli_sym} sur les caractéristiques des applications symétriques. Cependant, comme les multiplicités des classes ne sont pas entières cela enlève de la rigidité, d'où l'hypothèse imposée sur les coefficients.

\section{Cellule de Voronoï associée à l'identité}\label{section_cellule_identite}
 Le groupe de Cremona agissant transitivement sur les cellules de Voronoï, il suffit d'étudier la cellule $\V(\id)$ pour comprendre toutes les cellules. Dans cette section, nous caractérisons les classes se trouvant dans la cellule $\V(\id)$ (Proposition \ref{prop_points_alignes_posgen} et Théorème \ref{prop_cellule_identite}). 
    Par définition, une classe $c$ est dans la cellule de Voronoï associée à l'identité si elle ne peut pas être rapprochée de $\l$ en faisant agir un élément de $\Bir(\P^2)$ : 
    \[c\in\V(\id) \Leftrightarrow  \text{ pour tout } f\in\Bir(\P^2),\ \isome{f}(c)\cdot \l\geq c\cdot\l.\]

    Avant cela, introduisons des notations utilisées dans cette section et par la suite. Nous avons vu que le support d'une classe $c\in\orbl$ (ou plus généralement dans $\H$) est dénombrable. Par facilité d'écriture, étant donnée une classe $c\in\orbl$, nous renumérotons toujours dans cette section les points de son support et les multiplicités correspondantes, de sorte qu'ils soient indicés par un sous-ensemble $I'$ de $\N$ qui est soit un intervalle commençant à $0$ soit $\N$ tout entier :  \[c=n\l-\sum\limits_{i\in I'}\lambda_ie_{p_i}.\] Remarquons que pour alléger les notations, nous écrivons $\lambda_i$ au lieu de $\lambda_{p_i}$. 
    Par la suite, lorsque nous ferons agir une application $f\in\Bir(\P^2)$ sur $c$, nous aurons également besoin d'une notation pour les points-base de $f$. S'ils sont dans le support de $c$, la notation des points-base de $f$ sera induite par celle sur les points du support de $c$. Si ce n'est pas le cas, nous aurons besoin d'introduire un ensemble fini $J$ disjoint de $\N$, tel que pour $j\in J$, $\lambda_{j}$ est la multiplicité nulle de $c$ correspondant au point-base $p_j$ de $f$ qui n'est pas dans le support de $c$. De fait, pour tout $i\in I'$ et pour tout $j\in J$, $\lambda_i> \lambda_j$. Ainsi, pour alléger les notation, $I$ correspondra suivant le contexte à $I'$ ou à $I'\cup J$ et les indices de $J$ seront considérés comme plus grands que ceux de $I'$. Une classe est dite \emph{ordonnée} lorsque la numérotation induit un ordre décroissant sur les multiplicités de $c$ : pour tout $i,j\in I$ tels que $i<j$, $\lambda_i\geq \lambda_j$.
    
    Considérons l'action d'une application $f\in\Bir(\P^2)$ de degré $d$ sur une classe $c=n\l-\sum_{i\in I}\lambda_ie_{p_i}\in \orbl$. Pour tout $i\in\I$ tel que $p_i$ est un point-base de $f$, nous notons $m_i$ sa multiplicité pour $f$ en tant que point-base. Alors nous avons l'égalité suivante : 
    \begin{align}\label{lemme_calcul_fc_id_c}
    \begin{split}
    \isome{f}(c)\cdot \l-c\cdot\l &= c\cdot \isome{f}^{-1}(\l)-c\cdot\l\\
    &=c\cdot (d\l-\sum\limits_{p_i\in\supp(f)}m_ie_{p_i})-n\\
    &=(d-1)n-\sum\limits_{p_i\in\supp(f)}m_i\lambda_i
    \end{split}
    \end{align}
    
    Dans le cas où la somme des trois plus grandes multiplicités d'une classe ordonnée est inférieure ou égale à son degré, le lemme suivant permet de vérifier rapidement que cette classe appartient à $\V(\id)$. 
    
    \begin{lemme}\label{lemme_reformulation_identite}
    	Soit $f\in\Bir(\P^2)$ de degré $d$ et $c=n\l-\sum_{i\in I}\lambda_ie_{p_i}\in \orbl$. Notons, pour tout $i\in I$ tel que $p_i\in\supp(f)$, $m_i$ la multiplicité du point-base $p_i$. Considérons le multi-ensemble $E$ constitué des indices de $I$, chacun apparaissant avec multiplicité $m_i$. Alors, il est possible de partitionner $E$ en $d-1$ triplets et d'avoir ainsi, en notant $T$ l'ensemble de ces triplets : 
    	\[\isome{f}(c)\cdot \l-c\cdot\l=\sum\limits_{ \{\!\{ i,j,k\}\!\}\in T} n-\lambda_i-\lambda_j-\lambda_k.\]
    	De plus, il est possible de choisir la partition de $E$ de sorte que chaque triplet de $E$ soit constitué de trois indices deux à deux distincts. 
    \end{lemme}
    Rappelons que les indices de $I$ correspondent à des points du support de $c$ ou du support de $f$.
    \begin{proof}
    	D'après l'équation \eqref{lemme_calcul_fc_id_c}, nous avons l'égalité : 
    	\begin{equation*}\tag{$\star$}
    	\isome{f}(c)\cdot \l-c\cdot\l=(d-1)n-\sum\limits_{p_i\in\supp(f)}m_i\lambda_i.
    	\label{align_equation_lemme_reecriture}
    	\end{equation*}
    	L'égalité du lemme \ref{lemme_equations_evidentes}.\ref{eq_Noethercanonique} \[3(d-1)=\sum_{p_i\in\supp(f)}m_i\] implique que nous pouvons constituer $d-1$ triplets dans $E$ afin d'arranger le terme de droite de \eqref{align_equation_lemme_reecriture} comme une somme de $d-1$ termes de la forme $n-\lambda_i-\lambda_j-\lambda_k$, où $\{\!\{ i,j,k\}\!\}\in T$. Ainsi 
    	\[ (d-1)n-\sum\limits_{p_i\in\supp(f)}m_i\lambda_i=\sum\limits_{\{\!\{ i,j,k\}\!\}\in T} n-\lambda_i-\lambda_j-\lambda_k.\]    De plus, pour chaque $i$, l'inégalité du lemme \ref{lemme_equations_evidentes}.\ref{eq_mi_pluspetit_d}\[m_i\leq d-1,\] permet de répartir les $m_i$ multiplicités $\lambda_i$ de sorte qu'il n'y en ait pas deux dans le même terme de la somme et donc que les triplets de $T$ soient formés d'indices deux à deux distincts. Une façon de former ces triplets est de faire une \emph{répartition en suivant}. Nous répartissons le premier indice $i_0$ de multiplicité non nulle $m_{i_0}$ dans les $m_{i_0}$ premiers triplets. Puis nous répartissons le second indice $i_1$ dans les $m_{i_1}$ triplets suivants en revenant au premier quand nous avons complété le dernier. En itérant ce procédé nous obtenons la partition en triplets attendue.
    \end{proof}
    \begin{lemme}\label{lemme_pointsalignes_posgenerale}
    	Soit $c\in\orbl$ une classe de degré $n$. Supposons que l'une des deux conditions suivantes est réalisée.
    	\begin{enumerate}[wide]
    		\item Les points $p_{i}$, $p_{j}$ et $p_{k}$ sont alignés.
    		\item Les points $p_{i}$, $p_{j}$ et $p_{k}$ sont le support d'une application quadratique $\q$ et \[\isome{\q}(c)\cdot\l\geq c\cdot\l.\]
    	\end{enumerate} Alors, en notant $\lambda_{i}$, $\lambda_{j}$ et $\lambda_{k}$ les multiplicités respectives, éventuellement nulles, des points $p_{i}$, $p_{j}$ et $p_{k}$ pour $c$, nous avons : \[n-\lambda_{i}-\lambda_{j}-\lambda_{k}\geq 0.\]
    \end{lemme} 
    \begin{proof}
    	Si les points $p_{i}$, $p_{j}$ et $p_{k}$ sont alignés alors en considérant la droite passant par ces trois points, nous avons d'après l'inégalité de Bézout (Définition \ref{proprietes_c}.\ref{propriete_classe_bezout}) le résultat attendu.
    	Si les points $p_{i}$, $p_{j}$ et $p_{k}$ sont les points-base de l'application quadratique $\q$, l'équation \eqref{lemme_calcul_fc_id_c} nous donne : \[n-\lambda_{i}-\lambda_{j}-\lambda_{k}=\isome{\q}(c)\cdot\l- c\cdot\l \geq 0\] ce qui prouve le lemme sous la seconde condition.
    \end{proof}
    
     L'objet des énoncés qui suivent est de caractériser les classes appartenant à $\V(\id)$. Nous étudions dans un premier temps (Lemme \ref{lemme_3_grand_coeff_pos_c_in_id} et Proposition \ref{prop_points_alignes_posgen}) des conditions suffisantes pour appartenir à la cellule $\V(\id)$. C'est la partie facile puisque n'importe quel choix de partition de $E$ en triplets d'indices deux à deux disjoints convient. Cependant, ces conditions ne sont pas nécessaires à cause de classes particulières. Nous étudions ces classes lors du Théorème \ref{prop_cellule_identite}.
    
    \begin{lemme}\label{lemme_3_grand_coeff_pos_c_in_id}
        	Soit $c=n\l-\sum_{i\in I}\lambda_ie_{p_i}\in \orbl$ une classe ordonnée. Si la somme des trois plus grandes multiplicités de $c$ est inférieure ou égale à son degré : \[n-\lambda_0-\lambda_1-\lambda_2\geq 0,\]
        	alors $c\in\V(\id)$.
        \end{lemme}
        \begin{proof}
        	Considérons une telle classe $c$. L'hypothèse ainsi que la décroissance des multiplicités de $c$ impliquent que pour tout triplet $\{\!\{ i,j,k\}\!\}$ d'indices deux à deux distincts de $I$, $n-\lambda_i-\lambda_j-\lambda_k\geq 0$.  Ainsi, par le lemme \ref{lemme_reformulation_identite}, pour tout $f\in\Bir(\P^2)$, \[\isome{f}(c)\cdot\l\geq c\cdot \l.\] Par conséquent $c\in\V(\id)$ comme attendu. 
        \end{proof}
    
    \begin{prop}\label{prop_points_alignes_posgen}
        	Soit $c=n\l-\sum_{i\in I}\lambda_ie_{p_i}\in \orbl$ une classe ordonnée.
        	\begin{enumerate}
        		\item Si les points $p_0$, $p_1$ et $p_2$ sont alignés alors $n-\lambda_0-\lambda_1-\lambda_2\geq 0$ et $c\in\V(\id)$.
        		\item Si les points $p_0$, $p_1$ et $p_2$ sont le support d'une application quadratique alors la classe $c$ appartient à $\V(\id)$ si et seulement si \[n-\lambda_0-\lambda_1-\lambda_2\geq 0.\]
        		\item Si les points $p_1$ et $p_2$ sont adhérents à $p_0$ et si $n-\lambda_0-\lambda_1-\lambda_2\geq 0$ alors $c\in\V(\id)$.
        	\end{enumerate}
        \end{prop}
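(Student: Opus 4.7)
The plan is to derive each of the three parts from the already-established Lemmas \ref{lemme_reformulation_identite}, \ref{lemme_pointsalignes_posgenerale} and \ref{lemme_3_grand_coeff_pos_c_in_id}, using the ordering $\lambda_0 \ge \lambda_1 \ge \lambda_2 \ge \cdots$ to reduce everything to the single inequality $n - \lambda_0 - \lambda_1 - \lambda_2 \ge 0$. The key observation is that once this inequality is established, any triple of distinct indices $\{i,j,k\}$ satisfies $n - \lambda_i - \lambda_j - \lambda_k \ge 0$ by monotonicity of the $\lambda_i$, so Lemma \ref{lemme_3_grand_coeff_pos_c_in_id} applies directly and yields $c \in \V(\id)$.

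For (1), the alignment of $p_0,p_1,p_2$ lets me apply the Bézout condition \ref{proprietes_c}.\ref{propriete_classe_bezout} to the line through them; this is exactly the first case of Lemma \ref{lemme_pointsalignes_posgenerale} and gives $n - \lambda_0 - \lambda_1 - \lambda_2 \ge 0$. Lemma \ref{lemme_3_grand_coeff_pos_c_in_id} then concludes. For (3), the adherence hypothesis does not actually enter the estimate: because the class is ordered, the assumption $n - \lambda_0 - \lambda_1 - \lambda_2 \ge 0$ is already the exact input of Lemma \ref{lemme_3_grand_coeff_pos_c_in_id}, so $c \in \V(\id)$ follows at once. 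The adherence is presumably retained only to record the geometric configuration, which is useful later when comparing excesses; here it is cosmetic.

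Part (2) is the only one with substantive content, namely the converse direction. The implication $(\Leftarrow)$ is again Lemma \ref{lemme_3_grand_coeff_pos_c_in_id}. For $(\Rightarrow)$, I would apply the defining property of $\V(\id)$ to the particular quadratic transformation $\q$ whose base points are $p_0,p_1,p_2$: this forces $\isome{\q}(c)\cdot\l \ge c\cdot\l$, which via equation~\eqref{lemme_calcul_fc_id_c} specialized to $d=2$ (equivalently the second case of Lemma \ref{lemme_pointsalignes_posgenerale}) rearranges into $n - \lambda_0 - \lambda_1 - \lambda_2 \ge 0$. This converse works precisely because the quadratic-support configuration produces an element of $\Bir(\P^2)$ of degree $2$ whose action on $c$ extracts exactly the combination $n - \lambda_0 - \lambda_1 - \lambda_2$; no such map exists in cases (1) or (3), which is why neither of those admits an analogous converse. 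The proof therefore presents no real obstacle: the three parts merely package the three basic ways in which the three largest multiplicities of an ordered class can be forced to satisfy the defining test for $\V(\id)$.
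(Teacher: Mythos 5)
Your proposal is correct and follows exactly the same route as the paper: part (1) combines Lemma \ref{lemme_pointsalignes_posgenerale} (Bézout on the line) with Lemma \ref{lemme_3_grand_coeff_pos_c_in_id}, part (3) is a direct application of Lemma \ref{lemme_3_grand_coeff_pos_c_in_id}, and the forward direction of part (2) tests the defining inequality of $\V(\id)$ against the quadratic map supported at $p_0,p_1,p_2$ and reads off $n-\lambda_0-\lambda_1-\lambda_2$ via \eqref{lemme_calcul_fc_id_c}. Your side remark that the adherence hypothesis in (3) plays no role in the argument and merely records the third possible configuration of the top three points is also accurate.
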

        \begin{proof}
            	\begin{enumerate}[wide,itemsep=0pt]
            		\item Le premier point découle des lemmes \ref{lemme_pointsalignes_posgenerale} et \ref{lemme_3_grand_coeff_pos_c_in_id}.
            		\item Soit $\q$ une application quadratique ayant $p_0$, $p_1$ et $p_2$ comme points-base. Si $c\in\V(\id)$, la définition de cellule de Voronoï implique qu'en particulier, \[\isome{q}(c)\cdot \l=c\cdot\isome{q}^{-1}(\l) \geq c\cdot \l .\] Le lemme \ref{lemme_pointsalignes_posgenerale} permet d'obtenir l'inégalité souhaitée. La réciproque découle du lemme \ref{lemme_3_grand_coeff_pos_c_in_id}.
            		\item Le troisième point découle du lemme \ref{lemme_3_grand_coeff_pos_c_in_id}.\qedhere
            	\end{enumerate}
            \end{proof}
        
    Une classe ordonnée qui possède au moins trois points dans son support est appelée \emph{spéciale} si les points $p_1$ et $p_2$ sont adhérents à $p_0$ et si ses trois plus grandes multiplicités vérifient : $0>n-\lambda_0-\lambda_1-\lambda_2$. 
    \begin{rmq}\label{rmq_classe_speciale_lambda0_nsur2}
    En particulier, une classe spéciale ne possède qu'une seule multiplicité maximale. En effet, si $c$ est une classe spéciale alors $\lambda_0>\frac{n}{2}$ puisque par positivité des excès (Définition \ref{proprietes_c}.\ref{propriete_classe_exces}) $\lambda_0\geq \lambda_1+\lambda_2$ et donc \[0> n-\lambda_0-\lambda_1-\lambda_2\geq n-2\lambda_0.\] La conclusion s'obtient avec l'inégalité de Bézout (Définition \ref{proprietes_c}.\ref{propriete_classe_bezout}).
    \end{rmq}
       
      Si une classe $c$ ordonnée n'est pas spéciale alors soit les points $p_0$, $p_1$ et $p_2$ sont alignés, soit ils forment le support d'une application quadratique, soit les points $p_1$ et $p_2$ sont adhérents à $p_0$ et $n-\lambda_0-\lambda_1-\lambda_2\geq 0$. Par positivité des excès en $p_0$ (Définition \ref{proprietes_c}.\ref{propriete_classe_exces}) il n'y a pas d'autre possibilité. Par conséquent, d'après la proposition \ref{prop_points_alignes_posgen} et le lemme \ref{lemme_reformulation_identite}, nous obtenons une caractérisation immédiate pour contrôler si une classe non spéciale ordonnée appartient à $\V(\id)$ :
    
    \begin{cor}\label{rmq_classe_non-spe_V(id)}
    Une classe $c$ ordonnée et non spéciale appartient à $\V(\id)$ si et seulement si elle a un degré supérieur ou égal à la somme de ses trois plus grandes multiplicités : \[n-\lambda_0-\lambda_1-\lambda_2\geq 0.\]
    \end{cor}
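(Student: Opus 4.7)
Le plan est de procéder par une analyse par cas selon la configuration des trois points $p_0$, $p_1$, $p_2$ correspondant aux plus grandes multiplicités de $c$. La remarque préliminaire essentielle est la trichotomie établie juste avant l'énoncé : pour une classe ordonnée non spéciale possédant au moins trois points dans son support, l'une des trois situations suivantes est nécessairement réalisée : (a) les points $p_0$, $p_1$, $p_2$ sont alignés, (b) ils forment le support d'une application quadratique, ou (c) les points $p_1$ et $p_2$ sont tous deux adhérents à $p_0$ avec $n-\lambda_0-\lambda_1-\lambda_2\geq 0$. La positivité des excès en $p_0$ (Définition \ref{proprietes_c}.\ref{propriete_classe_exces}) garantit qu'il n'y a pas d'autre possibilité.

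Pour l'implication $(\Leftarrow)$, je partirais de l'hypothèse $n-\lambda_0-\lambda_1-\lambda_2\geq 0$. Comme $c$ est ordonnée, cette inégalité s'étend immédiatement à tout triplet d'indices deux à deux distincts de $I$ puisque $\lambda_i\leq \lambda_2$ pour $i\geq 2$. Il suffit alors d'invoquer directement le lemme \ref{lemme_3_grand_coeff_pos_c_in_id}, qui repose sur la décomposition fournie par le lemme \ref{lemme_reformulation_identite} : pour tout $f\in\Bir(\P^2)$, la différence $\isome{f}(c)\cdot\l-c\cdot\l$ se réécrit comme une somme de $d-1$ termes tous positifs, d'où $c\in\V(\id)$.

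Pour la réciproque $(\Rightarrow)$, je supposerais $c\in\V(\id)$ et traiterais les trois cas. Le cas (a) donne l'inégalité automatiquement par la condition de Bézout (Définition \ref{proprietes_c}.\ref{propriete_classe_bezout}) appliquée à la droite contenant $p_0$, $p_1$, $p_2$ ; l'hypothèse $c\in\V(\id)$ n'est même pas nécessaire. Dans le cas (b), en testant contre une application quadratique $\q$ de points-base $p_0$, $p_1$, $p_2$, l'appartenance $c\in\V(\id)$ fournit $\isome{\q}(c)\cdot\l \geq c\cdot\l$ ; or, par le calcul \eqref{lemme_calcul_fc_id_c}, cette inégalité équivaut précisément à $n-\lambda_0-\lambda_1-\lambda_2\geq 0$. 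C'est le contenu du second point de la proposition \ref{prop_points_alignes_posgen}. Enfin, le cas (c) contient l'inégalité dans sa propre définition.

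L'énoncé étant une synthèse directe des outils établis précédemment, il n'y a pas d'obstacle technique majeur : le travail substantiel a été effectué en amont, d'une part pour justifier la trichotomie via la positivité des excès en $p_0$, et d'autre part pour démontrer la proposition \ref{prop_points_alignes_posgen} à partir du calcul explicite de l'action d'une application quadratique sur une classe de $\orbl$.
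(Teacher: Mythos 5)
Votre démonstration est correcte et suit essentiellement le même chemin que l'article : la même trichotomie (points alignés, support d'une quadratique, points adhérents à $p_0$) justifiée par la positivité des excès, combinée à la proposition \ref{prop_points_alignes_posgen} pour la condition nécessaire et au lemme \ref{lemme_3_grand_coeff_pos_c_in_id} (via le lemme \ref{lemme_reformulation_identite}) pour la condition suffisante. Rien à redire.
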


    Comme nous le verrons dans les exemples \ref{ex_classe_speciale2} et \ref{classe_spéciale1}, il existe des classes spéciales appartenant à $\V(\id)$. Par conséquent, la caractérisation du corollaire \ref{rmq_classe_non-spe_V(id)} ne caractérise pas toutes les classes de $\V(\id)$. Ceci nous oblige à faire une disjonction de cas suivant si les classes sont spéciales ou non. De plus, il est difficile d'obtenir un critère aussi simple que dans le cas des classes non spéciales pour vérifier qu'une classe spéciale appartient à $\V(\id)$. Le théorème \ref{prop_cellule_identite} permet de réduire les applications birationnelles à tester pour vérifier qu'une classe spéciale est dans $\V(\id)$. Il suffit de faire agir les applications de caractéristique Jonquières ayant comme point-base maximal le point correspondant à la plus grande multiplicité de $c$. En fait ce n'est pas si surprenant que le cas des classes spéciales soit plus compliqué que le cas des classes non spéciales. Les trois premiers points du support des classes spéciales sont précisément dans la configuration qui avait été oubliée par M. Noether et qui a posé problème dans sa preuve du théorème de Noether-Castelnuovo (voir le commentaire fait après le théorème \ref{thm_générateurs_Cremona}).
    \begin{thm}\label{prop_cellule_identite}
    	Soit $c=n\l-\sum_{i\in I}\lambda_ie_{p_i}\in \orbl$ une classe spéciale ordonnée.
    	\begin{enumerate}
    		\item\label{prop_cellule_identite_appartenir} La classe $c$ est dans $\V(\id)$ si et seulement si pour toute application de Jonquières $\j$ et de point-base maximal $p_0$ nous avons 
    		\[\isome{j}(c)\cdot\l\geq c\cdot\l.\]
    		\item\label{prop_cellule_identite_bord} De plus, si $c\in\V(\id)\cap\V(f)$ alors $f$ est une application de caractéristique Jonquières dont l'inverse a comme point-base maximal $p_0$. En notant $p_{i_1},\dots,p_{i_{2d-2}}$ les petits points-base de $f^{-1}$, $c$ vérifie : 
    		\[n=\lambda_0+\frac{1}{d-1}\sum\limits_{i\in\{i_1,\dots,i_{2d-2}\}}\lambda_i.\]
    	\end{enumerate}
    \end{thm}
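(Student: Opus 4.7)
L'implication directe du point \ref{prop_cellule_identite_appartenir} est immédiate par définition de $\V(\id)$, puisque toute application de Jonquières appartient à $\Bir(\P^2)$. Le cœur du travail est l'implication réciproque, pour laquelle nous utiliserons la décomposition fournie par le lemme \ref{lemme_reformulation_identite} : pour tout $f\in\Bir(\P^2)$ de degré $d$,
\[\isome{f}(c)\cdot\l - c\cdot\l = \sum_{\{\!\{i,j,k\}\!\}\in T}(n-\lambda_i-\lambda_j-\lambda_k),\]
où $T$ est une partition du multi-ensemble des indices des points-base de $f$ (avec multiplicités) en $d-1$ triplets d'indices deux à deux distincts.

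Le plan consiste à choisir $T$ de manière à ce que chaque triplet dit \emph{mauvais} (tel que $\lambda_i+\lambda_j+\lambda_k>n$) contienne l'indice $0$ associé à $p_0$, et que la collection de ces mauvais triplets, réunie à $p_0$, constitue le support d'une application de Jonquières $\j$ de point-base maximal $p_0$ et de degré égal au nombre de mauvais triplets plus un. La proposition \ref{prop_supp_jonq_points_majeurs} garantira l'existence d'une telle $\j$ pourvu que la configuration géométrique soit adéquate. Sous ces conditions, la somme des contributions mauvaises vaut exactement $\isome{\j}(c)\cdot\l - c\cdot\l$, positive ou nulle par hypothèse, tandis que chaque triplet bon contribue positivement : soit via la condition de Bézout (Définition \ref{proprietes_c}.\ref{propriete_classe_bezout}) lorsque les trois points sont alignés, soit via l'hypothèse appliquée à une application quadratique ayant $p_0$ pour point-base dans le cas d'un support quadratique, conformément à la proposition \ref{prop_points_alignes_posgen}.

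Le point technique principal est la construction effective de cette partition : il faut vérifier que les indices non nuls apparaissant dans les mauvais triplets satisfont aux hypothèses de la proposition \ref{prop_supp_jonq_points_majeurs} (nombre équilibré de points adhérents et non adhérents à $p_0$, pas deux points alignés avec $p_0$, pas deux points adhérents à un même troisième point hors de $p_0$). Pour cela, on s'appuiera sur le caractère spécial de $c$, qui entraîne $\lambda_0>n/2$ (Remarque \ref{rmq_classe_speciale_lambda0_nsur2}), ainsi que sur la positivité des excès (Définition \ref{proprietes_c}.\ref{propriete_classe_exces}) et la condition de Bézout pour $c$, afin d'écarter les obstructions géométriques et de pouvoir \og rediriger\fg{} tout mauvais triplet ne contenant pas $0$ en y injectant une copie de cet indice prélevée ailleurs.

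Pour le point \ref{prop_cellule_identite_bord}, l'hypothèse $c\in\V(\id)\cap\V(f)$ donne $\isome{f^{-1}}(c)\cdot\l=c\cdot\l$. L'analyse ci-dessus appliquée à $f^{-1}$ décompose alors $0$ en somme de termes positifs ou nuls, qui s'annulent donc tous séparément. L'annulation de la contribution Jonquières fournit immédiatement $(d-1)(n-\lambda_0)=\sum_{k=1}^{2d-2}\lambda_{i_k}$, soit la formule voulue. Quant à la caractéristique de $f^{-1}$, un raffinement de l'argument précédent impose que la partition ne contienne aucun triplet bon : chaque triplet contient l'indice $0$ et chaque autre indice n'apparaît qu'une fois. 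Ceci signifie que $p_0$ est point-base de $f^{-1}$ avec multiplicité $d-1$ et que les $2d-2$ autres points-base apparaissent avec multiplicité $1$, si bien que $f^{-1}$ (et donc $f$) est de caractéristique Jonquières, avec $p_0$ comme point-base maximal de $f^{-1}$.
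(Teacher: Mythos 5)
Votre plan reprend la bonne stratégie de haut niveau (celle du papier) : décomposer $\isome{f}(c)\cdot\l-c\cdot\l$ via le lemme \ref{lemme_reformulation_identite}, constater que tout triplet ne contenant pas l'indice $0$ est automatiquement positif (c'est le fait \ref{fait_positif_0} du papier, conséquence de la positivité des excès en $p_0$ et de Bézout), puis contrôler les triplets restants par l'hypothèse sur les applications de Jonquières. Mais l'étape centrale de votre argument est fausse telle quelle : vous affirmez que les mauvais triplets, réunis à $p_0$, constituent le support d'une application de Jonquières de degré égal à leur nombre plus un, et que leur contribution vaut exactement $\isome{\j}(c)\cdot\l-c\cdot\l$. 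Or un mauvais triplet est typiquement de la forme $\{\!\{0,i,j\}\!\}$ avec $p_i$ \emph{et} $p_j$ tous deux adhérents ou voisins de $p_0$ (c'est précisément la configuration qui rend $c$ spéciale), tandis qu'une application de Jonquières de point-base maximal $p_0$ dans cette configuration doit avoir exactement la moitié de ses petits points-base adhérents à $p_0$ et l'autre moitié non adhérents (Lemme \ref{lemme_support_Jonquieres}, Remarque \ref{rmq_pas_deuxpoints_adherents_petit_jonquieres}) : déjà pour un unique mauvais triplet $\{\!\{0,1,2\}\!\}$ il n'existe aucune application quadratique de points-base $p_0,p_1,p_2$. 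L'ensemble des indices apparaissant dans les mauvais triplets n'est donc en général pas un support de Jonquières, et le « réappariement » que vous évoquez n'est pas toujours possible triplet par triplet : il peut y avoir trop de points adhérents par rapport aux points non adhérents parmi les points-base de $f$. C'est exactement ce que le papier doit gérer, en construisant l'ensemble JONQ à partir des points-base \emph{majeurs de $f$} (Lemme \ref{lemme_nbr_pt_adh_majeur}, Proposition \ref{prop_supp_jonq_points_majeurs}), en introduisant les multiplicités résiduelles $\tilde m_i$, et surtout en abandonnant la partition en triplets : les contributions restantes des points adhérents et voisins sont regroupées en termes « longs » $n-\lambda_{a_1}-\cdots-\lambda_{a_k}$ (positifs par l'excès en $p_0$), avec de plus une disjonction de cas délicate selon que $d-m_0-\tilde m_{a_1}-\tilde m_{v_1}$ est nul ou non. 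Rien de cela n'est esquissé dans votre proposition, et le cas où $p_0$ n'est pas un point-base de multiplicité maximale de $f$ n'est pas traité du tout.

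Le point \ref{prop_cellule_identite_bord} hérite de ces lacunes : votre « raffinement » affirme que la partition ne peut contenir aucun triplet bon et que chaque indice autre que $0$ n'apparaît qu'une fois, mais un triplet positif peut très bien s'annuler (par exemple $n-\lambda_0-\lambda_i-\lambda_j=0$), et l'exclusion des configurations parasites demande un argument : le papier montre d'abord que le cas difficile \ref{item_cas_difficile} est impossible (via une application de Jonquières auxiliaire et la contradiction \eqref{contradiction_1}), puis que les termes \eqref{align_somme_1_adh} et \eqref{align_somme_1_vna} ne peuvent s'annuler sans contredire le fait \ref{fait_positif_0}, pour enfin conclure $m_0=d-1$ par annulation du poids $d-1-m_0$ et le lemme \ref{lemme_equations_evidentes}.\ref{eq_m_0_jonquieres}. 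Votre texte est un plan plausible, mais le cœur combinatoire de la preuve — qui est précisément ce qui la rend difficile — manque, et l'énoncé pivot sur lequel il repose est incorrect en l'état.
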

    Si $f$ est une application de caractéristique Jonquières alors elle envoie un pinceau de droites sur un autre pinceau de droites. Par conséquent, en la pré-composant par un automorphisme, c'est une application de Jonquières. Ainsi il existe un représentant de $\overline{f}$ qui est une application de Jonquières. 
    Le reste de cette section est consacré à la preuve de ce théorème, et sera complété par deux exemples de classes spéciales.
    La démonstration est technique et est décomposée en faits pour faciliter la lecture. L'implication rapide est faite en premier. Pour montrer la réciproque, l'astuce est d'écrire la condition \[\isome{f}(c)\cdot\l\geq c\cdot\l,\] sous forme d'une somme mettant en jeu les multiplicités de $c$ qui sont supportées par les points-base de $f$ de sorte que tous les termes de la somme soient positifs. 
    \begin{proof}
    	Prouvons dans un premier temps le point \ref{prop_cellule_identite_appartenir}.
    	L'implication découle de la définition de cellule de Voronoï. En effet, une classe est dans la cellule associée à l'identité si et seulement si pour toute application $f\in\Bir(\P^2)$, \[\isome{f}(c)\cdot \l \geq c\cdot \l.\] Par conséquent, si $c\in\V(\id)$ alors en particulier, pour toute application de Jonquières $\j$ de point-base maximal $p_0$ nous avons : 
    	\[\isome{\j}(c)\cdot \l \geq c\cdot\l.\]

    	Intéressons nous à la réciproque du point \ref{prop_cellule_identite_appartenir}. Supposons que pour toute application de Jonquières $\j$ de point-base maximal $p_0$, nous ayons :\begin{equation}\label{eq_condition_sur_c}
    	\isome{\j}(c)\cdot \l \geq c\cdot\l.
    	\end{equation} Nous devons montrer que pour toute application $f\in\Bir(\P^2)$, \[\isome{f}(c)\cdot \l \geq c\cdot\l.\] 
    	Fixons $f\in\Bir(\P^2)$ de degré $d$ dont les $r$ points-base $\{p_i\}$ sont de multiplicité respective $\{m_i\}$. La numérotation est induite par celle déjà faite sur les points $p_i$ du support de $c$ : $i\leq j$ si et seulement si $\lambda_i\geq \lambda_j$. Rappelons que si le point $p_i$ n'est pas dans le support de $c$ alors nous posons $\lambda_i=0$. D'après l'égalité \eqref{lemme_calcul_fc_id_c} et le lemme \ref{lemme_reformulation_identite}, et en gardant ses notations, nous devons montrer que la somme suivante est positive :
    	\begin{equation}\tag{$\star \star $}
    	(d-1)n-\sum_{p_i\in\supp(f)}m_i\lambda_i =\sum\limits_{\{\!\{ i,j,k\}\!\}\in T} n-\lambda_i-\lambda_j-\lambda_k\geq 0.
    	\label{equation_somme_terme_n_lambda}
    	\end{equation}
    	Le reste de la preuve consiste à choisir, lorsque c'est possible, les triplets de $T$ de sorte que chaque terme soit positif. Lorsque ça ne sera pas le cas, nous partitionnerons également $E$ avec des sous-ensembles de tailles différentes afin d'obtenir également des sommes de termes positifs.
    	
    	\begin{fait}\label{fait_positif_0}
    		Pour tout triplet d'indices $\{i_1,i_2,i_3\}$ tels que $0<i_1< i_2\leq i_3$, nous avons : \[n-\lambda_{i_1}-\lambda_{i_2}-\lambda_{i_3}\geq 0.\] De plus, si ce terme est nul alors pour tout $1\leq j\leq i_3$, 
    		$\lambda_j =n-\lambda_0$. 
    	\end{fait}
    	\begin{proof}
    		Comme la classe $c$ est spéciale les points $p_1$ et $p_2$ sont adhérents à $p_0$. Par positivité des excès en $p_0$ pour $c$ et par l'inégalité de Bézout pour la droite passant par les points $p_0$ et $p_2$, nous avons : \[\lambda_{i_1}+\lambda_{i_2}+\lambda_{i_3}\leq \lambda_1+2\lambda_2\underset{\ref{proprietes_c}.\ref{propriete_classe_exces}}{\leq} \lambda_0+\lambda_2\underset{\ref{proprietes_c}.\ref{propriete_classe_bezout}}{\leq}  n.\] 
    		Si $n-\lambda_{i_1}-\lambda_{i_2}-\lambda_{i_3}=0$ alors en considérant le cas d'égalité dans les inégalités ci-dessus nous obtenons \[\lambda_{i_3}=\lambda_2=n-\lambda_0.\] De plus, en considérant la droite passant par les points $p_0$ et $p_1$, l'inégalité de Bézout implique que $\lambda_1\leq n-\lambda_0$. Nous concluons par décroissance des $\lambda_i$.
    	\end{proof}

    	\begin{fait}\label{fait_p_0_pas_maximal}
    		Si le point $p_0$ n'est pas un point-base de multiplicité maximale pour $f$ alors \[\isome{f}(c)\cdot \l> c\cdot \l.\]
    	\end{fait}
    	
    	\begin{proof}
    		Si le point $p_0$ n'est pas un point-base de multiplicité maximale pour $f$, alors il existe un point-base de $f$ dans $\P^2$, noté $p_{r}$, tel que \[d-1\underset{\ref{lemme_equations_evidentes}.\ref{eq_mi_pluspetit_d}}{\geq} m_{r}>m_0.\]Nous posons comme convention que si $p_0$ n'est pas un point-base de $f$, $m_0=0$. Nous pouvons réécrire l'égalité \eqref{equation_somme_terme_n_lambda} : 
    		\begin{align*}
    		\sum\limits_{\{\!\{ i,j,k\}\!\}\in T} n-\lambda_i-\lambda_j-\lambda_k=(d-1)n-\sum_{p_i\in\supp(f)}m_i\lambda_i & = m_0(n-\lambda_0-\lambda_{r})\\
    		&+(m_{r}-m_0)(n-\lambda_{r})\\
    		& +(d-1-m_r)n \\
    		& -\sum_{\substack{p_i\in\supp(f)\\i\notin \{ 0,r\}}}m_i\lambda_i.
    		\end{align*}
    		Rappelons que nous avons noté $E$ le multi-ensemble constitué des indices des points-base de $f$, chacun apparaissant avec multiplicité $m_i$ et que $T$ est une partition de $E$ en $d-1$ triplets. Nous voulons modifier $T$ de sorte qu'il y ait $m_0$ triplets possédant les indices $0$ et $r$, $m_r-m_0$ triplets possédant l'indice $r$, $d-1-m_r$ triplets ne possédant aucun des indices $0$ et $r$ et que tous les triplets $\{\!\{ i,j,k\}\!\}$ aient la propriété suivante : $i<j\leq k$. Nous construisons trois sous-ensembles de triplets nommés $T_{0,r}$, $T_r$ et $T_{\emptyset}$ constitués respectivement des triplets contenant les indices $0$ et $r$, $r$ mais pas $0$, et enfin ni $0$ ni $r$.   
   Nous répartissons les indices $i \in E\setminus\{0,r\}$ de la façon suivante. Considérons les plus petits indices $i_0\in E\setminus \{0,r\}$. Si $d-1>m_r$, nous commençons par placer exactement un des $m_{i_0}$ coefficients $i_0$ dans chacun des $\min(d-1-m_r, m_{i_0})$ premiers triplets de $T_{\emptyset}$. Si $m_{i_0}>d-1-m_r$ nous continuons de la même manière à répartir exactement un indice $i_0$ dans les $\min(m_r-m_0,d-1-m_0)$ premiers triplets de $T_{r}$. Enfin, si $m_{i_0}>d-1-m_0$ nous répartissons les derniers indices $i_0$ dans les premiers triplets de $T_{0,r}$. Remarquons que $m_{i_0}<d-1$ et par conséquent les $m_{i_0}$ indices ${i_0}$ ont été répartis dans des termes différents. Puis nous considérons les plus petits indices $i_1\in E\setminus\{0,r,i_0\}$. Nous commençons à répartir les indices $i_1$ à partir de là où nous nous sommes arrêté et nous suivons la même stratégie. Une fois que nous avons complété les $m_0$ triplets de $T_{0,r}$, nous recommençons à compléter les triplets de $T_{\emptyset}$. Nous continuons le procédé, en remarquant qu'à présent il n'est plus possible de compléter les triplets de $T_{0,r}$ puisqu'ils possèdent trois indices. Une fois que nous avons ajouté un second indice aux triplets $T_{\emptyset}$ et $T_{r}$, les triplets de $T_r$ sont complets et nous répartissons les indices restants dans les triplets de $T_{\emptyset}$.
   Par la suite, nous ne serons pas aussi précis et nous dirons que nous avons « complété les termes des trois premières lignes à l'aide des coefficients $\lambda_i$ restants ».
     		
    		Ainsi $T_{\emptyset}\cup T_{r}\cup T_{0,r}$ forme une partition de $E$ en triplets. De plus chaque triplet $\{\!\{ i,j,k\}\!\}$ satisfait $i<j\leq k$. Montrons que les trois types de termes sont positifs ou nuls et qu'il existe au moins un terme strictement positif.\begin{itemize}[wide]
    			\item Par le fait \ref{fait_positif_0}, les $(d-1-m_r)$ termes de la forme $n-\lambda_{i}-\lambda_{j}-\lambda_{k}$ où $\{\!\{ i,j,k\}\!\}\in T_{\emptyset} $, sont positifs ou nuls. 
    			\item Par le fait \ref{fait_positif_0}, les $m_r-m_0$ termes de la forme $n-\lambda_{r}-\lambda_{i}-\lambda_{j}$ où $\{\!\{ i,j,r\}\!\}\in T_r$ sont positifs. Ils sont en fait strictement positifs. Sinon, toujours par le fait \ref{fait_positif_0}, nous aurions, en notant $k$ le plus petit indice strictement plus grand que $0$ et différent de $r$ (pas forcément correspondant à un point-base de $f$), $\lambda_k=\lambda_{r}=n-\lambda_0$. Ceci implique : \[n-\lambda_0-\lambda_k-\lambda_{r}=\lambda_0-n<0.\] Or les points $p_0$ et $p_{r}$ étant distincts dans $\P^2$, les points $p_0$, $p_k$ et $p_{r}$ sont soit le support d'une application quadratique, soit alignés ce qui contredit le lemme \ref{lemme_pointsalignes_posgenerale}. De plus, comme $m_r-m_0>0$ il existe au moins un terme de cette forme.

    			\item Intéressons-nous aux $m_0$ autres termes : $n-\lambda_0-\lambda_{r}-\lambda_k$ où $\{\!\{ 0,r,k\}\!\}\in T_{0,r}$. 
    			Il existe un point $p_{k_1}$ tel que $p_{k}$ soit voisin de, ou égal à $p_{k_1}$ et que les points $p_0$, $p_{r}$ et $p_{k_1}$ soient le support d'une application quadratique ou soient alignés. Ainsi par le lemme \ref{lemme_pointsalignes_posgenerale} et par la positivité de l'excès au point $p_{k_1}$ (Définition \ref{proprietes_c}.\ref{propriete_classe_exces}), nous avons : 
    			\[n-\lambda_0-\lambda_{r}-\lambda_k\underset{\ref{proprietes_c}.\ref{propriete_classe_exces}}{\geq} n-\lambda_0-\lambda_{r}-\lambda_{k_1}\underset{\ref{lemme_pointsalignes_posgenerale}}{\geq}0.\]
    		\end{itemize} 
    		Par conséquent, avec l'ensemble des triplets $T_{\emptyset}\cup T_{r}\cup T_{0,r}$ définis ainsi, les termes de \eqref{equation_somme_terme_n_lambda} sont positifs et au moins un terme est strictement positif ce qui implique que cette somme est strictement positive.
    	\end{proof}
   
    	Il nous reste à étudier le cas où le point $p_0$ est de multiplicité maximale pour $f$ qui va se révéler être le cas le plus difficile.
    	Considérons les sous-ensembles disjoints d'indices des points-base de $f$ suivants : 
    	\begin{itemize}\phantomsection\label{famille_poins}
    		\item MIN est l'ensemble des indices $i$ tels que $p_i\in \supp(f)$ et est un point mineur de $c$, c'est-à-dire $\lambda_i\leq\frac{n-\lambda_0}{2}$.
    		\item ADH est l'ensemble des indices $i\notin \text{MIN}$ tels que $p_i\in \supp(f)$ et est un point adhérent à $p_0$.
    		\item VNA est l'ensemble des indices $i\notin \text{MIN}$ tels que $p_i\in\supp(f)$ et est un point voisin et non adhérent à $p_0$.
    	\end{itemize}
    	 	\begin{figure}[h]
    	     	    		\begin{center}
    	     	    		    	\scalebox{.90}{
    	    	    	       	    			\begin{tikzpicture}
    	    	    	       	    			\coordinate (p0) at (0,0);
    	    	    	       	    			\coordinate (pi1) at (0,1);
    	    	    	       	    			\coordinate (pi2) at (0,2);
    	    	    	       	    			\coordinate (pi4) at (0,3);
    	    	    	       	    			\coordinate (pi3) at (1,1);
    	    	    	       	    			\coordinate (pi8) at (0,4);
    	    	    	       	    			\coordinate (pi5) at (-1,1);
    	    	    	       	    			\coordinate (pi6) at (-2,2);
    	    	    	       	    			\coordinate (pi7) at (2,2);
    	    	    	       	    			\fill (p0) circle (2pt);
    	    	    	       	    			\fill (pi1) circle (2pt);
    	    	    	       	    			\fill (pi2) circle (2pt);
    	    	    	       	    			\fill (pi3) circle (2pt);
    	    	    	       	    			\fill (pi4) circle (2pt);
    	    	    	       	    			\fill (pi5) circle (2pt);
    	    	    	       	    			\fill (pi6) circle (2pt);
    	    	    	       	    			\fill (pi7) circle (2pt);
    	    	    	   						\fill (pi8) circle (2pt);
    	    	    	       	    			\node at (p0) [below right] {$p_0$};
    	    	    	       	    			\node at (pi1) [right] {$p_{1}$};
    	    	    	       	    			\node at (pi2) [right] {$p_{2}$};
    	    	    	       	    			\node at (pi3) [right] {$p_{3}$};
    	    	    	       	    			\node at (pi4) [right] {$p_{4}$};
    	    	    	       	    			\node at (pi5) [right] {$p_{5}$};
    	    	    	       	    			\node at (pi6) [right] {$p_{6}$};
    	    	    	       	    			\node at (pi7) [right] {$p_{7}$};
    	    	    	       	    			\node at (pi8) [right] {$p_{8}$};
    	    	    	       	    			\draw [directed](pi1) to (p0);
    	    	    	       	    			\draw [directed](pi2) to[bend right=30] (p0);
    	    	    	       	    			\draw [directed](pi2) to (pi1);
    	    	    	       	    			\draw [directed](pi4) to (pi2);
    	    	    	       	    			\draw [directed](pi8) to (pi4);
    	    	    	       	    			\draw [directed](pi7) to (pi3);
    	    	    	       	    			\draw [directed](pi3) to (p0);
    	    	    	       	    			\draw [directed](pi5) to (p0);
    	    	    	       	    			\draw [directed](pi6) to (pi5);
    	    	    	       	    			\draw [directed](pi6) to[bend right=30] (p0);
    	    	    	       	    			\end{tikzpicture}}
    	    	    	       	    			\caption{Exemples de points indicés par MIN, ADH et VNA \label{ex_pt_min_adh_vna}}
    	    	    	       	    		\end{center}
    	    	    	       	    	\end{figure}
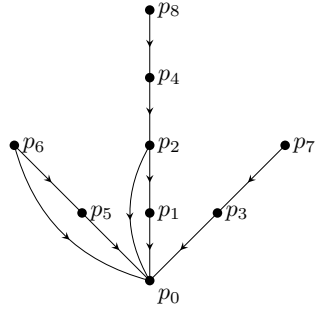
    	\begin{ex}
    	Considérons la classe ordonnée \[c=4\l-3e_{p_0}-\frac{2}{3}e_{p_1}-\frac{2}{3}e_{p_2}-\frac{2}{3}e_{p_3}-\frac{7}{12}e_{p_4}-\frac{1}{3}e_{p_5}-\frac{1}{3}e_{p_6}-\frac{1}{3}e_{p_7}-\frac{1}{12}e_{p_8}-\sum\lambda_ie_{p_i},\]
    	où les points $p_1$, $p_2$, $p_3$, $p_5$ et $p_6$ sont adhérents au point $p_0$, le point $p_8$ est adhérent au point $p_4$ qui est adhérent au point $p_2$, le point $p_7$ est adhérent au point $p_3$ et le point $p_6$ est adhérent au point $p_5$ (voir Figure \ref{ex_pt_min_adh_vna}). Supposons de plus que l'ensemble $\{p_0,p_1,p_2,\cdots, p_8\}$ est un sous-ensemble des points-base de $f$ dont $p_0$ est un point-base de multiplicité maximale.
    	Alors $\{1,2,3\}\subset \text{ADH}$, $4\in\text{VNA}$, $\{i\geq 5\mid p_i\in\supp(c)\cap\supp(f)\}\in\text{MIN}$.
    	\end{ex}
       	
    	\begin{fait}\label{fait_adh_vna_min}
    	Les ensembles MIN, ADH et VNA correspondent à une partition des points-base de $f$ hors $p_0$.
    	\end{fait}
    	
    	\begin{proof}
    	Ces ensembles d'indices étant disjoints, il suffit de montrer que tout point-base de $f$ hors $p_0$ est indicé par un élément de $\text{ADH}\cup\text{VNA}\cup\text{MIN}$. 
    	
    	Tous les points-base de $f$ voisins du point $p_0$ sont indicés par un élément de ces trois sous-ensemble. Montrons que les points de $\P^2$ ou voisins d'un point de $\P^2$ différent de $p_0$ sont mineurs pour $c$.    	
    	Soit $p_m\in\P^2$ alors les points $p_0$, $p_1$, $p_m$ sont soit alignés soit le support d'une application quadratique donc d'après le lemme \ref{lemme_pointsalignes_posgenerale} nous obtenons l'inégalité : 
    	\[0\underset{\ref{lemme_pointsalignes_posgenerale}}{\leq} n-\lambda_0-\lambda_1-\lambda_m\leq n-\lambda_0-2\lambda_m,\] qui permet de conclure que le point $p_m$ est mineur pour $c$. De plus, tout point $p_{m'}$ voisin de $p_m$ est mineur puisque $\lambda_{m'}\leq \lambda_m$ par positivité des excès pour $c$ en $p_m$ et en tous les points qu'il faut éclater pour obtenir $p_m'$ (Définition \ref{proprietes_c}.\ref{propriete_classe_exces}).
    	\end{proof}
    	
    	Remarquons que par définition de l'ensemble MIN, tout terme de la forme suivante est positif : \[n-\lambda_0-\lambda_i-\lambda_j \geq 0, \text{ pour tous } i,j\in\text{MIN}.\] 
    	Notons que les indices $i$ et $j$ ne sont pas forcément distincts.
    	Si l'ensemble ADH est vide, il en est de même de VNA par positivité des excès pour $c$ et pour $f$ (Définition \ref{proprietes_c}.\ref{propriete_classe_exces} et Proposition \ref{prop_consistence}). Par conséquent, dans ce cas-là tous les indices hors $0$ sont dans MIN et la somme de \eqref{equation_somme_terme_n_lambda} est positive car tous les termes le sont. Supposons donc que ADH est non vide. Le but dans ce qui suit est d'arranger de façon adéquate les multiplicités des points indicés par ADH et par VNA afin d'obtenir une somme dont chaque terme est positif.
    	
    	\begin{fait}\label{fait_max_1ptindice_ADH_VNA}
    		Chaque point voisin de $p_0$ possède au plus un point adhérent indicé par $\text{ADH}\cup\text{VNA}$, les autres, s'ils existent, sont indicés par MIN. 
    	\end{fait}    	
    	\begin{proof}
    		Soient $p_{m_1}$ et $p_{m_2}$ deux points adhérents au point $p_m$ voisin de $p_0$, et tels que $\lambda_{m_2}\leq \lambda_{m_1}$ (voir Figure \ref{figure_fait_max_1ptindice_ADH_VNA}). Notons $p_r$, possiblement égal au point $p_m$, le point libre et adhérent à $p_0$ tel que $p_m$ soit voisin de $p_r$. Alors d'après l'inégalité de Bézout pour la droite passant par les points $p_0$ et $p_r$ et la positivité de l'excès en $p_r$ et en $p_m$, nous avons : \[0\underset{\ref{proprietes_c}.\ref{propriete_classe_bezout}}{\leq} n-\lambda_0-\lambda_r\underset{\ref{proprietes_c}.\ref{propriete_classe_exces}}{\leq} n-\lambda_0-\lambda_{m}\underset{\ref{proprietes_c}.\ref{propriete_classe_exces}}{\leq} n-\lambda_0-\lambda_{m_1}-\lambda_{m_2}\leq n-\lambda_0-2\lambda_{m_2}.\]Par conséquent, $m_2\in\text{MIN}$.
    	\end{proof}
    	
    	    	\begin{minipage}[t]{0.51\textwidth}
    	    	\begin{center}
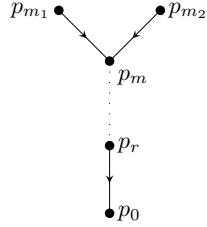

    	    		\scalebox{.9}{
    	    	\begin{tikzpicture}
    	    	    			\coordinate (p0) at (0,0);
    	    	    			\coordinate (pr) at (0,1);
    	    	    			\coordinate (pm) at (0,2.25);
    	    	    			\coordinate (pm1) at (-0.75,3);
    	    	    			\coordinate (pm2) at (0.75,3);
    	    	    			\fill (p0) circle (2pt);
    	    	    			\fill (pr) circle (2pt);
    	    	    			\fill (pm) circle (2pt);
    	    	    			\fill (pm1) circle (2pt);
    	    	    			\fill (pm2) circle (2pt);
    	    	    			\node at (p0) [right] {$p_0$};
    	    	    			\node at (pr) [right] {$p_r$};
    	    	    			\node at (pm) [below right] {$p_m$};
    	    	    			\node at (pm1) [left] {$p_{m_1}$};
    	    	    			\node at (pm2) [right] {$p_{m_2}$};
    	    	    			\draw [loosely dotted](pm) to (pr);
    	    	    			\draw [directed](pr) to (p0);
    	    	    			\draw [directed](pm2) to (pm);
    	    	    			\draw [directed](pm1) to (pm);
    	    	    			\end{tikzpicture}}
    	    	    			\captionof{figure}{Les points $p_{m_1}$ et $p_{m_2}$ sont adhérents à $p_m$. \label{figure_fait_max_1ptindice_ADH_VNA}}
    	    	    			    	\end{center}
    	    	\end{minipage}
    	    	\begin{minipage}[t]{0.48\textwidth}
    	    	\begin{center}
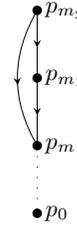

    	    		\scalebox{.9}{	\begin{tikzpicture}
    	    	    	    			\coordinate (p0) at (0,0);
    	    	    	    			\coordinate (pm) at (0,1);
    	    	    	    			\coordinate (pm1) at (0,2);
    	    	    	    			\coordinate (pm2) at (0,3);
    	    	    	    			\fill (p0) circle (2pt);
    	    	    	    			\fill (pm) circle (2pt);
    	    	    	    			\fill (pm1) circle (2pt);
    	    	    	    			\fill (pm2) circle (2pt);
    	    	    	    			\node at (p0) [right] {$p_0$};
    	    	    	    			\node at (pm) [right] {$p_m$};
    	    	    	    			\node at (pm1) [right] {$p_{m_1}$};
    	    	    	    			\node at (pm2) [right] {$p_{m_2}$};
    	    	    	    			\draw [loosely dotted](pm) to (p0);
    	    	    	    			\draw [directed](pm1) to (pm);
    	    	    	    			\draw [directed](pm2) to (pm1);
    	    	    	    			\draw [directed](pm2) to[bend right=30] (pm);
    	    	    	    			\end{tikzpicture}}
    	    	    	    			\captionof{figure}{Le point $p_{m_2}$ est un point satellite. \label{figure_fait_adh_vna_min}}
    	    	    	    			    	\end{center}    	    	\end{minipage}

    	Remarquons que le fait \ref{fait_max_1ptindice_ADH_VNA} implique que les points indicés par VNA sont tous libres. En effet, si un point $p_{m_2}$ est satellite cela signifie que $p_{m_2}$ est adhérent à deux points $p_{m_1}$ et $p_{m}$ et que $p_{m_1}$ est également adhérent à $p_{m}$ (voir Figure \ref{figure_fait_adh_vna_min}). Par le fait \ref{fait_max_1ptindice_ADH_VNA} et par positivité des excès en $p_{m_1}$ nous obtenons que $m_2\in\text{MIN}$.
    	Par contre, les points indicés par ADH peuvent être libres ou satellites.

    	\begin{rmq}\label{rmq_point_satellite_libre_ADH}
    		Un point libre indicé par ADH est un point infiniment proche du premier ordre de $p_0$. De plus, tout point satellite indicé par ADH est voisin d'un point libre indicé par ADH. Par exemple sur la figure \ref{figure_rmq_pt_libre_satellite_adh}, les points $p_{i_1}$, $p_{i_2}$, $p_{i_3}$ et $p_{i_4}$ sont adhérents au point $p_0$. Les points $p_{i_2}$ et $p_{i_3}$ sont des points satellites et sont voisins du point $p_{i_1}$ qui est libre. Le point $p_{i_4}$ est lui aussi libre.
    	\end{rmq}
    	
   	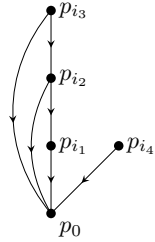
\begin{figure}[h]
   	    		\begin{center}
   	    		\scalebox{0.9}{
   	    			\begin{tikzpicture}
   	    			\coordinate (p0) at (0,0);
   	    			\coordinate (pi1) at (0,1);
   	    			\coordinate (pi2) at (0,2);
   	    			\coordinate (pi3) at (0,3);
   	    			\coordinate (pi4) at (1,1);
   	    			\fill (p0) circle (2pt);
   	    			\fill (pi1) circle (2pt);
   	    			\fill (pi2) circle (2pt);
   	    			\fill (pi3) circle (2pt);
   	    			\fill (pi4) circle (2pt);
   	    			\node at (p0) [below right] {$p_0$};
   	    			\node at (pi1) [right] {$p_{i_1}$};
   	    			\node at (pi2) [right] {$p_{i_2}$};
   	    			\node at (pi3) [right] {$p_{i_3}$};
   	    			\node at (pi4) [right] {$p_{i_4}$};
   	    			\draw [directed](pi1) to (p0);
   	    			\draw [directed](pi4) to (p0);
   	    			\draw [directed](pi2) to (pi1);
   	    			\draw [directed](pi3) to (pi2);
   	    			\draw [directed](pi2) to[bend right=30] (p0);
   	    			\draw [directed](pi3) to[bend right=40] (p0);
   	    			\end{tikzpicture}}
   	    			\caption{Une configuration de points adhérents à $p_0$\label{figure_rmq_pt_libre_satellite_adh}}
   	    		\end{center}
   	    	\end{figure}

    	\begin{fait}\label{fait_support_Jonq_pt_min}
    		Supposons qu'il existe un sous-ensemble $\{p_0,p_{j_1},\dots,p_{j_{2\delta}}\}$ de l'ensemble des points-base de $f$ qui est le support d'une application de Jonquières ayant pour point-base maximal $p_0$. Notons $j_{\text{min}}\in\{j_1,\dots,j_{2\delta}\}$ un indice tel que $\lambda_{j_{\text{min}}}= \min\{\lambda_i\mid\ i\in\{j_1,\dots,j_{2\delta}\}\}$. Alors pour tout $i\geq j_{\text{min}}$, l'indice $i$ est dans l'ensemble MIN.
    	\end{fait}
    	\begin{proof}
    		Soit $\j$ une application de caractéristique Jonquières ayant cet ensemble de points comme points-base. Notons qu'elle est de degré $\delta+1$. Nous avons alors par hypothèse sur $c$ : \[0\underset{\eqref{eq_condition_sur_c}}{\leq} \isome{\j}(c)\cdot\l-c\cdot\l\underset{\eqref{lemme_calcul_fc_id_c}}{=} \delta n-\delta\lambda_0-\sum\limits_{i=1}^{2\delta}\lambda_{j_i}\leq \delta(n-\lambda_0-2\lambda_{j_{\text{min}}}).\] Par conséquent l'indice $j_{\text{min}}$ est dans MIN, et par décroissance des $\lambda_i$, c'est le cas de tous les indices plus grands que $j_{\text{min}}$. 
    	\end{proof}
    	
    	\begin{fait}\label{fait_card_VNA_inferieur_ADH}
    		Parmi les points-base de $f$ voisins de tout point libre indicé par ADH il y a au moins autant de points indicés par ADH que par VNA. En particulier, le cardinal de VNA est strictement inférieur à celui de ADH.
    	\end{fait}
    	
      	\begin{proof}
    		Soit ${a_0}\in\text{ADH}$ tel que $p_{a_0}$ est un point libre. Notons $a_1\leq\dots\leq a_k$ les indices de ADH et $v_1\leq\dots\leq v_{r}$ les indices de VNA tels que les points $p_{a_1},\dots, p_{a_k},p_{v_1},\dots,p_{v_{r}}$ soient les voisins de $p_{a_0}$. Raisonnons par l'absurde et supposons que $k$ soit strictement plus petit que $r$ : $k<r$. Par le fait \ref{fait_max_1ptindice_ADH_VNA}, ces points forment une tour où chaque point est adhérent au précédent mais n'est adhérent à aucun autre point de la tour, excepté de $p_0$ lorsqu'ils sont indicés par ADH (voir Figure \ref{figure_fait_card_VNA_inferieur_ADH}). Considérons l'ensemble pré-consistant $\{p_0,p_{a_0},p_{a_1},\dots,p_{a_k},p_{v_1},\dots, p_{v_{k+1}}\}$. D'après le lemme \ref{lemme_support_Jonquieres}, il existe une application de Jonquières de degré $k+2$ ayant cet ensemble de points comme points-base et $p_0$ comme point-base maximal. Par le fait \ref{fait_support_Jonq_pt_min}, $v_{k+1},\dots,v_{r}\in\text{MIN}$ ce qui est la contradiction attendue.	
    	\end{proof} 
    	
   	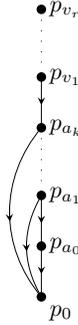
\begin{figure}[h]
  		\begin{center}
  		\scalebox{0.9}{
  			\begin{tikzpicture}
  			\coordinate (p0) at (0,0);
  			\coordinate (pa0) at (0,0.75);
  			\coordinate (pa1) at (0,1.5);
  			\coordinate (pak) at (0,2.5);
  			\coordinate (pv1) at (0,3.25);
  			\coordinate (pvr) at (0,4.25);
  			\fill (p0) circle (2pt);
  			\fill (pa0) circle (2pt);
  			\fill (pa1) circle (2pt);
  			\fill (pak) circle (2pt);
  			\fill (pv1) circle (2pt);
  			\fill (pvr) circle (2pt);
  			\node at (p0) [below right] {$p_0$};
  			\node at (pa0) [right] {$p_{a_0}$};
  			\node at (pa1) [right] {$p_{a_1}$};
  			\node at (pak) [right] {$p_{a_k}$};
  			\node at (pv1) [right] {$p_{v_1}$};
  			\node at (pvr) [right] {$p_{v_r}$};
   			\draw [loosely dotted](pak) to (pa1);
   			\draw [loosely dotted](pvr) to (pv1);  			
  			\draw [directed](pa0) to (p0);
  			\draw [directed](pa1) to (pa0);
  			\draw [directed](pv1) to (pak);
  			\draw [directed](pa1) to[bend right=30] (p0);
  			\draw [directed](pak) to[bend right=40] (p0);
  			\end{tikzpicture}}
  			\caption{Tour de points au-dessus de $p_0$\label{figure_fait_card_VNA_inferieur_ADH}}
  		\end{center}
   	\end{figure}

    	S'il existe un point-base majeur de $f$ indicé par ADH alors nous introduisons une dernière catégorie d'indice non disjointe des précédentes appelée JONQ. Construisons cet ensemble. \begin{itemize}[wide]\phantomsection\label{famille_Jonq}
    		\item tous les indices de ADH correspondant à des points majeurs pour $f$ sont dans JONQ.
    		\item Ensuite nous ajoutons un à un les indices des points majeurs de $f$, dans l'ordre décroissant des $\lambda_i$, de façon à ce que l'ensemble reste consistant à chaque étape.
    		\item Nous nous arrêtons lorsqu'il y a autant de points adhérents à $p_0$ que de points non adhérents à $p_0$. Cela arrive forcément d'après le lemme \ref{lemme_nbr_pt_adh_majeur}.  
    	\end{itemize}
    	
    	\begin{rmq}\label{rmq_JONQ_support_jonquieres}
    		D'après la proposition \ref{prop_supp_jonq_points_majeurs}, il existe une application de Jonquières ayant $p_0$ comme point-base maximal et possédant l'ensemble des points indicés par JONQ comme points-base de multiplicité $1$.
    	\end{rmq}
    	
    	\begin{fait}\label{fait_pt_majeur_f_indice_min}
    		Tout point-base majeur de $f$ non indicé par JONQ est indicé par MIN.
    	\end{fait}
    	\begin{proof}
    	Raisonnons par l'absurde et supposons qu'il existe $p_s$ un point-base majeur de $f$ tel que $s\notin\text{JONQ}\cup\text{MIN}$. Comme les indices de ADH sont tous dans JONQ, la seule possibilité est que $s\in\text{VNA}$. Considérons un indice $r$ de JONQ tel que pour tout $j\in\text{JONQ}$, $\lambda_r\leq \lambda_j$. Par la remarque \ref{rmq_JONQ_support_jonquieres} et le fait \ref{fait_support_Jonq_pt_min}, l'indice $r$ est dans MIN et non pas dans ADH. Par conséquent, c'est l'un des derniers points ajoutés et comme nous rangeons les indices dans JONQ dans l'ordre décroissant des $\lambda_i$ cela signifie que $\lambda_s\leq \lambda_r$. Par conséquent, $s$ appartient à MIN, ce qui nous donne la contradiction attendue.
    	\end{proof}
    	Soit $2\delta$ le cardinal de l'ensemble JONQ.
    	Nous renommons par $j_1,\dots, j_{2\delta}$ l'ensemble des indices de JONQ de sorte que $m_{j_1}\geq\dots\geq m_{j_{2\delta}}$ et que pour tout $1\leq i\leq 2\delta$, l'ensemble de points $\{p_{j_1},\dots,p_{j_i}\}$ muni de leur multiplicité respective est consistant. S'il n'existe pas de point-base majeur de $f$ indicé par ADH alors JONQ est un ensemble vide et l'entier $\delta$ ainsi que les multiplicités $m_{j_i}$ seront considérés comme nuls dans les calculs qui suivent.
    	
    	Commençons à réorganiser l'équation \eqref{equation_somme_terme_n_lambda}. Par le fait \ref{fait_adh_vna_min}, il est possible de répartir les termes de la façon suivante : 
    	\begin{equation}\label{equation_repartission}
    	\begin{split}
    	(d-1)n-\sum\limits_{p_i\in\supp(f)}m_i\lambda_i&=m_{j_{2\delta}}(\delta n-\delta\lambda_0-\lambda_{j_1}-\dots-\lambda_{j_{2\delta}})\\
    	&+(m_0-\delta m_{j_{2\delta}})(n-\lambda_0)\\
    	&+(d-1-m_0)n\\
    	&-\sum\limits_{i=1}^{2\delta-1}(m_{j_i}-m_{j_{2\delta}})\lambda_{j_i}\\
    	&-\sum\limits_{i\in(\text{VNA}\cup\text{ADH})\setminus \text{JONQ}}m_i\lambda_i\\
    	&-\sum\limits_{i\in\text{MIN}\setminus \text{JONQ}}m_i\lambda_i.
    	\end{split}
    	\end{equation}	
    	Lorsque nous sommes dans le cas des trois premières lignes, les termes en les $m_i$ sont appelés \emph{les poids} des \emph{termes en les multiplicités $\lambda_i$}.	
    	Notre but, à présent, est de répartir les $\lambda_i$ des sommes des trois dernières lignes afin que tous les termes soient positifs.
    	
    	Posons pour tout point-base $p_i$ de $f$ : \[\tilde{m}_{i}=\begin{cases}
    	    	m_{i}-m_{j_{2\delta}}  &\text{ si }  {i}\in\text{JONQ}\\
    	    	m_{i} & \text{ sinon }
    	    	\end{cases}.\]

     	Considérons les indices $i$ de ADH dont la multiplicité $\tilde{m}_i$ est non nulle. Notons-les $a_1,\dots,a_k$ de sorte que $\tilde{m}_{a_1}\geq\dots\geq \tilde{m}_{a_k}$ .
    	Faisons de même pour les indices de VNA et notons les $v_1,\dots,v_r$ de sorte que $\tilde{m}_{v_1}\geq\dots\geq \tilde{m}_{v_{r}}$. Malgré le fait \ref{fait_card_VNA_inferieur_ADH}, $k$ n'est pas forcément strictement supérieur à $r$. En effet il se pourrait qu'une situation analogue à l'exemple suivant arrive. 

	\begin{figure}[h]
    	    	    		\begin{center}
    	    	    		\scalebox{0.8}{
    	    	    			\begin{tikzpicture}
    	    	    			\coordinate (p0) at (0,0);
    	    	    			\coordinate (p1) at (0,1);
    	    	    			\coordinate (p2) at (0,2);
    	    	    			\coordinate (p3) at (0,3);
    	    	    			\fill (p0) circle (2pt);
    	    	    			\fill (p1) circle (2pt);
    	    	    			\fill (p2) circle (2pt);
    	    	    			\fill (p3) circle (2pt);
    	    	    			\node at (p0) [right] {$p_0$};
    	    	    			\node at (p1) [right] {$p_1$};
    	    	    			\node at (p2) [left] {$p_2$};
    	    	    			\node at (p3) [right] {$p_3$};
    	    	    			\draw [directed](p1) to (p0);
    	    	    			\draw [directed](p2) to (p1);
    	    	    			\draw [directed](p3) to (p2);
    	    	    			\draw[directed] (p2) to[bend right=30] (p0);
    	    	    			\end{tikzpicture}}
    	    	    		\end{center}
    	    	    		\caption{\label{figure_exemple_tilde}}
    	    	    	\end{figure}

    	Considérons la tour de points $p_0,p_1,p_2,p_3$ incluse dans l'ensemble des points-base de $f$ où chacun est adhérent au précédent, où $p_2$ est également adhérent à $p_0$ (voir Figure \ref{figure_exemple_tilde}) et où $1,2\in\text{ADH}$ et $3\in\text{VNA}$. Si nous supposons de plus que $1,2\in\text{JONQ}$, $3\notin\text{JONQ}$, que $m_{1}=m_{2}=m_{\j_{2\delta}}$ et que les autres points-base de $f$ sont dans $\P^2$ alors $k=0$ alors que $r=1$. Cette situation arrive si le point $p_3$ est un point-base de $f$ qui n'est pas majeur pour $f$ mais qui est majeur pour $c$.
    	
    	\begin{fait}\label{fait_k_plus_petit_l}
    		Si $r\geq k>0$ alors il existe au moins $r-k+1$ couples d'indices, notés $(a_{k+t},v_{i_t})_{1\leq t\leq r-k+1}$ tels que tous les indices sont deux à deux distincts et pour $1\leq t \leq r-k+1$ : \begin{itemize}[itemsep=0pt]
    			\item $a_{k+t}\in \text{ADH}\setminus \{a_1,\dots,a_k\}$,
    			\item $v_{i_t}\in  \{v_i\}_{1\leq i\leq r}$,
    			\item $p_{v_{i_t}}$ est voisin de $p_{a_{k+t}}$.
    		\end{itemize}
    	\end{fait}
    	Remarquons que la condition $a_{k+t}\in \text{ADH}\setminus \{a_1,\dots,a_k\}$ est équivalente à $a_{k+t}\in\text{ADH}\cap\text{JONQ}$ et $\tilde{m}_{a_{k+t}}=0$.
    	\begin{rmq}
    		L'hypothèse $k>0$ est indispensable car sinon nous pourrions avoir la situation précédent le fait \ref{fait_k_plus_petit_l}. Dans ce cas-là, $k=0$ et $r=1$. Il y a effectivement deux points indicés par ADH qui ont une multiplicité $\tilde{m}_i$ nulle mais il n'existe qu'un point indicé par VNA et voisin des points $p_{1}$ et $p_{2}$. \end{rmq}
    	\begin{proof}
    		Notons $p_{i_1},\dots,p_{i_N}$ les points libres indicés par ADH. Tous les points indicés par $\text{ADH}\cup\text{VNA}$ sont voisins ou égaux à ces points-là d'après la remarque \ref{rmq_point_satellite_libre_ADH}. Pour tout $j\in\{1,\dots,N\}$, considérons le point $p_{i_j}$ ainsi que ses points voisins indicés par $\text{ADH}\cup\text{VNA}$. D'après le fait \ref{fait_max_1ptindice_ADH_VNA}, ils forment une tour où chacun est adhérent uniquement au point précédent, sauf les points indicés par ADH qui sont également adhérents au point $p_0$. Notons $k_{i_j}$ et $r_{i_j}$ le cardinal des indices des points voisins ou égaux à $p_{i_j}$ dont la multiplicité $\tilde{m}_{i}$ est non nulle et qui sont respectivement indicés par ADH et VNA.
    		
    		Si $k_{i_j}\leq r_{i_j}$ alors d'après le fait \ref{fait_card_VNA_inferieur_ADH}, cela implique qu'il existe au moins $r_{i_j}-k_{i_j}+1$ points distincts de cette tour indicés par ADH qui ont une multiplicité $\tilde{m}_{i}$ nulle. 
    		Dans le cas où $k_{i_j}$ est non nul, $r_{i_j}-k_{i_j}+1\leq r_{i_j}$ donc il existe au moins $r_{i_j}-k_{i_j}+1$ points distincts de cette tour indicés par VNA qui ont une multiplicité $\tilde{m}_i$ non nulle. Si $k_{i_j}=0$ alors il y en a $r_{i_j}=r_{i_j}-k_{i_j}$. Par conséquent dans les deux cas il existe $r_{i_j}-k_{i_j}$ points distincts de cette tour indicés par VNA qui ont une multiplicité $\tilde{m_i}$ non nulle et autant indicés par ADH qui ont une multiplicité $\tilde{m}_i$ nulle. Nous pouvons en ajouter un de plus dans chaque famille lorsque $k_{i_j}$ est non nul. 		
    		
    		Notons $J$ l'ensemble des indices $j\in\{1,\dots,N\}$ tels que $k_{i_j}\leq r_{i_j}$. Comme nous avons supposé $k$ non nul cela signifie qu'il existe au moins un élément $s\in\{1,\dots,N\}$ tel que $k_{i_s}$ est non nul. S'il y en a plusieurs, nous en choisissons un. Posons : 
    		\[C_s=\begin{cases}
    		r_{i_s}-k_{i_s}+1+\sum\limits_{\substack{j\in J\\j\neq s}}(r_{i_j}-k_{i_j}) & \text{ si } s\in J\\
    		\sum\limits_{j\in J}(r_{i_j}-k_{i_j}) & \text{ sinon }
    		\end{cases}.\]
    		Nous avons montré que nous pouvons choisir au moins $C_s$ couples ayant les propriétés de l'énoncé.
    		Dans les deux cas de la définition de $C_s$, comme $r_{i_j}-k_{i_j}<0$ pour $j\notin J$, nous avons 
    		\[r-k = \sum\limits_{j\in J}(r_{i_j}-k_{i_j}) + \sum\limits_{j\notin J}(r_{i_j}-k_{i_j})<C_s,\]
    		par conséquent nous pouvons choisir $r-k+1$ couples comme dans l'énoncé.
    	\end{proof}
    	
    	\begin{fait}\label{fait_Jonq_devenu_mineur}
    		Nous avons $\tilde{m}_{a_1}\leq\frac{d-m_0}{2}$ et $\tilde{m}_{v_1}\leq\frac{d-m_0}{2}$.
    	\end{fait}
    	\begin{proof}
    		Montrons le résultat pour $\tilde{m}_{a_1}$, le résultat s'obtient de la même manière pour $\tilde{m}_{v_1}$. 
    		Si le point $p_{a_1}$ n'est pas un point-base majeur pour $f$, le résultat est immédiat puisque $\tilde{m}_{a_1}=m_{a_1}$. 
    		
    		Si le point $p_{a_1}$ est un point point-base majeur de $f$ alors $a_1\in\text{JONQ}$ (définition page \pageref{famille_Jonq}) sinon nous aurions une contradiction avec le fait \ref{fait_pt_majeur_f_indice_min} qui impliquerait que $a_1\in\text{MIN}$ or les ensembles ADH et MIN sont disjoints (définitions page \pageref{famille_poins}). Comme l'ensemble JONQ est pré-consistant, il existe un indice $i$ dans $\text{JONQ}\cap\text{ADH}$, possiblement égal à $a_1$ tel que $p_{a_1}$ est voisin de $p_i$ et tel que le point $p_i$ est libre. 
    		Raisonnons par l'absurde et supposons que $m_{a_1}-m_{j_{2\delta}}=\tilde{m}_{a_1}>\frac{d-m_0}{2}$. Par positivité de l'excès au point $p_i$ pour $f$ (Proposition \ref{prop_consistence}), et comme $p_{j_{2\delta}}$ est un point-base majeur pour $f$, nous avons : \[m_{i}\geq m_{a_1}>\frac{d-m_0}{2}+m_{j_{2\delta}}>d-m_0.\] Nous obtenons une contradiction avec le théorème de Bézout (Proposition \ref{prop_Bezout}) en considérant la droite passant par les points $p_0$ et $p_{i}$, puisque le point $p_{i}$ est, par hypothèse, libre et adhérent à $p_0$.
    	\end{proof}
    Le fait \ref{fait_Jonq_devenu_mineur} implique que $d-m_0-\tilde{m}_{a_1}-\tilde{m}_{v_1}\geq 0$.
    	Nous pouvons donc réarranger les termes en considérant deux cas, suivant si $d-m_0-\tilde{m}_{a_1}-\tilde{m}_{v_1}$ est nul ou pas.
    	\begin{enumerate}[wide, label=\roman*)]
    		\item\label{item_cas_moins_difficile} Si $d-m_0-\tilde{m}_{a_1}-\tilde{m}_{v_1}\geq 1$ alors nous répartissons les termes de \eqref{equation_repartission} de la façon suivante :
    		\begin{align}
    		(d-1)n-\sum\limits_{p_i\in\supp(f)}m_i\lambda_i&=m_{j_{2\delta}}(\delta n-\delta\lambda_0-\lambda_{j_1}-\dots-\lambda_{j_{2\delta}})\tag{$\ast_{jonq}$} \label{align_somme_1_jonq}\\
    		&\ +(m_0-\delta m_{j_{2\delta}})(n-\lambda_0)\tag{$\ast_{\lambda_0}$}\label{align_somme_1_lambda_0}\\
    		&\tag{$\ast_{adh}$}\left. \begin{aligned}\label{align_somme_1_adh}
    		&+\tilde{m}_{a_k}(n-\lambda_{a_1}-\dots-\lambda_{a_k})\\
    		&+(\tilde{m}_{a_{k-1}}-\tilde{m}_{a_k})(n-\lambda_{a_1}-\dots-\lambda_{a_{k-1}})\\
    		&\hspace{0.5em}\vdots\\
    		&+(\tilde{m}_{a_1}-\tilde{m}_{a_2})(n-\lambda_{a_1})
    		\end{aligned}\right\}\\
    		&\tag{$\ast_{vna}$}\left. \begin{aligned}\label{align_somme_1_vna}
    		&+\tilde{m}_{v_{r}}(n-\lambda_{v_1}-\dots-\lambda_{v_{r}})\\
    		&+(\tilde{m}_{v_{r-1}}-\tilde{m}_{v_{r}})(n-\lambda_{v_1}-\dots-\lambda_{v_{r-1}})\\
    		&\hspace{0.5em}\vdots\\
    		&+(\tilde{m}_{v_1}-\tilde{m}_{v_2})(n-\lambda_{v_1})
    		\end{aligned}\right\}\\
    		&\ +(d-1-m_0-\tilde{m}_{a_1}-\tilde{m}_{v_1})n\tag{$\ast_{n}$}\label{align_somme_1_n}\\
    		&\ -\sum_{i\in\text{MIN}}\tilde{m}_i\lambda_i.\nonumber
    		\end{align}
    		Remarquons que les sommes des expressions des lignes \eqref{align_somme_1_adh} et \eqref{align_somme_1_vna} sont respectivement égales à 
    		\[\tilde{m}_{a_1}n-\sum\limits_{i=1}^{k}\tilde{m}_{a_i}\lambda_{a_i}\  \text{ et } \  		\tilde{m}_{v_1}n-\sum\limits_{i=1}^{r}\tilde{m}_{v_i}\lambda_{v_i}.\]
    		Si $r=0$ la somme s'écrit de la même façon sans les termes de \eqref{align_somme_1_vna}. De même, si $k=0$ il n'y a plus les termes de \eqref{align_somme_1_adh}. Montrons que les termes de chaque ligne exceptée la dernière, sont positifs. Ensuite, nous répartirons les $\lambda_i$ où $i\in\text{MIN}$ de sorte que chaque terme reste positif.
    		\begin{itemize}[wide, label=$\ast$]
    			\item\textbf{Les poids sont positifs.} Les points $p_{j_1},\dots,p_{j_{\delta}}$ sont adhérents à $p_0$, par conséquent, par décroissance des multiplicités $m_{j_i}$ et par positivité des excès en $p_0$ pour $f$, nous avons : \[m_0 \underset{\ref{prop_consistence}}{\geq}\sum\limits_{i=1}^{\delta}m_{j_i}\geq  \delta m_{j_{2\delta}}.\] Ainsi, le poids $m_0-\delta m_{j_{2\delta}}$ est positif. Les autres poids le sont par hypothèse de décroissance des multiplicités et par hypothèse du cas \ref{item_cas_moins_difficile}.
    			\item\textbf{Les termes en $\lambda_i$ sont positifs.} Le terme en $\lambda_i$ de \eqref{align_somme_1_jonq} est positif par l'hypothèse faite sur $c$ (équation \eqref{eq_condition_sur_c}) et par la remarque \ref{rmq_JONQ_support_jonquieres} et l'équation \eqref{lemme_calcul_fc_id_c}.
    			Considérons les termes de \eqref{align_somme_1_adh}. En utilisant la positivité des excès en $p_0$ pour $c$, nous avons pour $1\leq i\leq k$ : 
    			\begin{equation}\label{eq_terme_adh}
    			n-\lambda_{a_1}-\lambda_{a_2}-\dots-\lambda_{a_i}\underset{\ref{proprietes_c}.\ref{propriete_classe_exces}}{\geq} n-\lambda_0\underset{\ref{propriete_classe_carre}}{>}0.
    			\end{equation}
    			Par le fait \ref{fait_card_VNA_inferieur_ADH} et par positivité des excès pour $c$ en chaque point adhérent (Proposition \ref{proprietes_c}.\ref{propriete_classe_exces}), nous avons : \[\sum\limits_{i\in\text{VNA}}\lambda_i\leq \sum\limits_{i\in\text{ADH}}\lambda_i.\]
    			Ainsi les termes de \eqref{align_somme_1_vna} sont strictement positifs, pour $1\leq i\leq r$ :
    			\begin{equation}\label{eq_terme_vna}
    			n-\lambda_{v_1}-\lambda_{v_2}-\dots-\lambda_{v_{i}}\geq n-\sum\limits_{i\in\text{VNA}}\lambda_i\geq n-\sum\limits_{i\in\text{ADH}}\lambda_i\geq n-\lambda_0\underset{\ref{propriete_classe_carre}}{>}0.
    			\end{equation}
    			\item\textbf{Répartissons les $\lambda_i$ où $i\in\text{MIN}$}. Nous voulons répartir pour tout $i\in\text{MIN}$ les $m_i$ multiplicités $\lambda_i$, de sorte que les termes soient comme dans le lemme \ref{lemme_reformulation_identite}, de la forme $n-\lambda_i-\lambda_j-\lambda_k$. Cependant, si $k\geq 4$ ou $r\geq 4$, certains termes \eqref{align_somme_1_adh} et \eqref{align_somme_1_vna} sont \emph{surchargés}, c'est-à-dire qu'ils contiennent plus de trois multiplicités $\lambda_i$, par conséquent d'autres termes devront être sous-chargés. Considérons le multi-ensemble $M$ constitué des $\lambda_i$ où $i\in\text{MIN}$ comptés avec multiplicités $m_i$. Dans chacun des $d-1-m_0-\tilde{m}_{a_1}-\tilde{m}_{v_1}$ termes de \eqref{align_somme_1_n} nous répartissons trois multiplicités de $M$. S'il reste des éléments dans $M$, nous mettons dans chacun des $m_0-\delta m_{j_{2\delta}}$ termes de \eqref{align_somme_1_lambda_0} deux éléments de $M$. S'il reste encore des éléments dans $M$, nous répartissons deux multiplicités de $M$ dans chacun des $\tilde{m}_{v_1}-\tilde{m}_{v_2}$ (respectivement $\tilde{m}_{a_1}-\tilde{m}_{a_2}$) derniers termes de \eqref{align_somme_1_vna} (respectivement \eqref{align_somme_1_adh}) et une multiplicité de $M$ dans chacun des $\tilde{m}_{v_2}-\tilde{m}_{v_3}$ (respectivement $\tilde{m}_{a_2}-\tilde{m}_{a_3}$) avant-derniers termes de  \eqref{align_somme_1_vna} (respectivement \eqref{align_somme_1_adh}). Ce procédé s'arrête dès que tous les éléments de $M$ ont été répartis. Les termes que nous venons de compléter sont positifs par la définition de l'ensemble MIN.
    		\end{itemize}   		
    		Nous venons de montrer que si $c$ satisfait la condition \eqref{eq_condition_sur_c} alors pour toute application $f$ qui a pour point-base maximal $p_0$ et dont les points-base satisfont \ref{item_cas_moins_difficile}, $\isome{f}(c)\cdot \l\geq c\cdot \l$.

    		\item\label{item_cas_difficile} Plaçons nous dans le cas où $d-m_0-\tilde{m}_{a_1}-\tilde{m}_{v_1}=0$.
    		 Le fait \ref{fait_Jonq_devenu_mineur} implique que \[\tilde{m}_{a_1}=\tilde{m}_{v_1}=\frac{d-m_0}{2}.\] En particulier, $k$ et $r$ sont strictement positifs. Nous ne pouvons pas répartir les termes de la somme comme dans le cas \ref{item_cas_moins_difficile} puisque le poids de \eqref{align_somme_1_n} est à présent négatif : \[-1-m_0-\tilde{m}_{a_1}-\tilde{m}_{v_1}=-1.\] 
    		 
			Si $r\geq k$, nous reprenons le résultat et les notations du fait \ref{fait_k_plus_petit_l}. 
			Posons $J=\{i_t\}_{1\leq t\leq r-k+1}$ l'ensemble des indices apparaissant dans le fait \ref{fait_k_plus_petit_l}. 
			Notons $J^c$ son complémentaire. 
			Les ensembles $J$ et $J^c$ forment une partition de l'ensemble des indices $\{1,\dots,r\}$. Dans le cas où $J^c$ est non vide, nous notons $t_0$ un de ses éléments. 
			S'il est vide les termes de \eqref{align_somme_2_v} n'existent pas et $\lambda_{v_{t_0}}=0$. Si $r<k$ alors $J$ est vide et $J^c$ contient les $r$ indices $\{1,\dots,r\}$.
    		
    		Par rapport à la répartition des termes dans la somme du cas \ref{item_cas_moins_difficile}, nous voulons répartir une fois les multiplicités $\lambda_{v_i}$ dans d'autres termes afin que le poids de \eqref{align_somme_2_n} soit positif. Pour cela, nous surchargeons le terme \eqref{align_somme_2_av} afin de ne pas avoir à compléter les termes \eqref{align_somme_2_v}. Nous arrangeons donc les termes de la somme de la façon suivante : 
    		
    		\begin{align*}
    		(d-1)n-\sum\limits_{p_i\in\supp(f)}m_i\lambda_i &=m_{j_{2\delta}}(\delta n-\delta\lambda_0-\lambda_{j_1}-\dots-\lambda_{j_{2\delta}})\tag{$\ast_{jonq}$}\label{align_somme_2_jonq}\\
    		&\  + n-\lambda_{a_1}-\lambda_{a_2}-\dots-\lambda_{a_k}-\lambda_{v_{t_{0}}}-\sum\limits_{t\in J}\lambda_{v_t} \tag{$\ast_{av}$}\label{align_somme_2_av}\\
    		&\tag{$\ast_{adh}$}\label{align_somme_2_adh}\left. \begin{aligned}
    		&+(\tilde{m}_{a_k}-1)(n-\lambda_{a_1}-\dots-\lambda_{a_k})\\
    		&+(\tilde{m}_{a_{k-1}}-\tilde{m}_{a_k})(n-\lambda_{a_1}-\dots-\lambda_{a_{k-1}})\\
    		&\hspace{0.5em}\vdots\\
    		&+(\tilde{m}_{a_1}-\tilde{m}_{a_2})(n-\lambda_{a_1})
    		\end{aligned}\right\}\\	
    		&\ +\sum\limits_{t\in J^c\setminus \{t_0\}}(n-\lambda_0-\lambda_{v_{t}})\tag{$\ast_{v}$}\label{align_somme_2_v}\\	
    		&\tag{$\ast_{vna}$}\label{align_somme_2_vna}\left. \begin{aligned}
    		&+(\tilde{m}_{v_{r}}-1)(n-\lambda_{v_1}-\dots-\lambda_{v_{r}})\\
    		&+(\tilde{m}_{v_{r-1}}-\tilde{m}_{v_{r}})(n-\lambda_{v_1}-\dots-\lambda_{v_{r-1}})\\
    		&\hspace{0.5em}\vdots\\
    		&+(\tilde{m}_{v_1}-\tilde{m}_{v_2})(n-\lambda_{v_1})
    		\end{aligned}\right\}\\
    		&+(m_0-\card(J^c)+1-\delta m_{j_{2\delta}})(n-\lambda_0)\tag{$\ast_{\lambda_0}$}\label{align_somme_2_0}\\
    		&+(d-1-m_0-\tilde{m}_{a_1}-\tilde{m}_{v_1}+1)n\tag{$\ast_n$}\label{align_somme_2_n}\\
    		&-\sum_{i\in\text{MIN}}m_i\lambda_i.
    		\end{align*} 
    	
    		Remarquons que la somme des expressions des lignes \eqref{align_somme_2_av} et \eqref{align_somme_2_adh} est égale à :
    		\[\tilde{m}_{a_1}n-\sum\limits_{i=1}^{k}\tilde{m}_{a_i}\lambda_{a_i}-\lambda_{v_{t_0}}-\sum\limits_{t\in J}\lambda_{v_t},\]
    		celle de la ligne \eqref{align_somme_2_v}, si elle existe, est égale à :
    		\[(\card(J^c)-1)(n-\lambda_0) -\sum\limits_{t\in J^c\setminus \{t_0\}}\lambda_{v_t},\]
    		et celle des lignes \eqref{align_somme_2_vna} est égale à :
    		\[(\tilde{m}_{v_1}-1)n-\sum\limits_{i=1}^{r}(\tilde{m}_{v_i}-1)\lambda_{v_i}.\]
    		Montrons que nous pouvons compléter les termes incomplets à l'aide des $\lambda_i$ où $i\in\text{MIN}$ afin que chaque terme soit positif.
    		\begin{itemize}[wide, label=$\ast$]
    			\item \textbf{Les poids sont positifs.} 
    			Montrons dans un premier temps que le cardinal de $J^c$ est inférieur ou égal à $k-1$ : \[ \card(J^c)=\begin{cases}
    			r\leq k-1  & \text{ si } r<k\\
    			r-(r-k+1)=k-1 &\text{ sinon}
    			\end{cases}.\]
    			En utilisant cela ainsi que la positivité des excès en $p_0$ pour $f$, nous obtenons :
    			\[m_0\underset{\ref{prop_consistence}}{\geq} \sum\limits_{i\in \text{ADH}}m_{i}= \sum\limits_{i=1 }^{k}\tilde{m}_{a_i}+\sum\limits_{i\in\text{JONQ}\cap\text{ADH}}\hspace{-0,5cm}m_{j_{2\delta}}\geq k+\delta m_{j_{2\delta}}>\card(J^c)+\delta m_{j_{2\delta}}.\] Ainsi le poids $m_0-\card(J^c)+1-\delta m_{j_2\delta}$ est strictement positif. Les autres le sont par décroissance des $\tilde{m}_i$ et par l'hypothèse faite dans le cas \ref{item_cas_difficile}.
    			\item \textbf{Les termes en $\lambda_i$ sont positifs.} Les termes en $\lambda_i$ de \eqref{align_somme_2_jonq}, \eqref{align_somme_2_adh} et \eqref{align_somme_2_vna} sont positifs par les mêmes arguments que dans le cas \ref{item_cas_moins_difficile} puisqu'ils n'ont pas changé.
    			S'il existe, le terme de \eqref{align_somme_2_v} est positif d'après l'inégalité de Bézout (Proposition \ref{proprietes_c}.\ref{propriete_classe_bezout}).
    			
    			Il reste à montrer la positivité du terme en $\lambda_i$ de \eqref{align_somme_2_av}. Si $\l\geq k$ alors d'après le fait \ref{fait_k_plus_petit_l}, il existe des points $p_{a_{k+1}},\dots, p_{a_{\l+1}}$ adhérents à $p_0$, distincts des $\{p_{a_i}\}_{1\leq i\leq k}$ et tels que pour $t\in J$, $p_{v_{i_t}}$ est voisin de $p_{a_{k+t}}$. Ainsi par positivité des excès au-dessus de chaque point adhérent : 
    			\[\sum\limits_{t\in J}\lambda_{v_{t}}\leq \sum\limits_{t\in J}\lambda_{a_{k+t}}.\]
    			Notons $p_a$ le point libre adhérent à $p_0$ tel que le point $p_{v_{t_0}}$ est voisin de $p_a$. Nous avons par positivité des excès pour les points $p_0$ et $p_a$, et par l'inégalité de Bézout : 
    			\begin{align}
    			\begin{split}\label{eq_av}
    			n-\lambda_{a_1}-\dots-\lambda_{a_k}-\lambda_{v_{t_0}}-\sum\limits_{t\in J}\lambda_{v_{t}} &\geq n-\sum\limits_{i=1}^{k}\lambda_{a_k}-\sum\limits_{t\in J}\lambda_{a_{{k+t}}} -\lambda_{v_{t_0}}\\
    			& \underset{\ref{proprietes_c}.\ref{propriete_classe_exces}}{\geq} n-\lambda_0 -\lambda_{v_{t_0}}\\
    			& \underset{\ref{proprietes_c}.\ref{propriete_classe_exces}}{\geq} n-\lambda_0 -\lambda_{a}\\
    			&\underset{\ref{proprietes_c}.\ref{propriete_classe_bezout}}{\geq}0,
    			\end{split}
    			\end{align}
    			et donc le terme en $\lambda_i$ de \eqref{align_somme_2_av} est positif.  Si $r<k$ alors $J$ est vide et nous obtenons le même résultat en utilisant la même inégalité qu'au-dessus en enlevant la somme sur $J$.
    			\item Montrons que nous pouvons \textbf{laisser incomplets les termes de \eqref{align_somme_2_v}}, s'ils existent. Si $k=1$ alors $J^c$ est vide et il n'y a par conséquent rien à faire. Sinon le terme de \eqref{align_somme_2_av} possède $k+\card(J)+1-3=k-2+\card(J)$ multiplicités $\lambda_i$ en trop qui compensent les $\card(J^c)-1\leq k-2$ multiplicités manquantes.
    			\item \textbf{Complétons les termes de la somme avec les $\lambda_i$ où $i\in\text{MIN}$.} D'après le point précédent, les termes à compléter sont les deux dernières lignes de \eqref{align_somme_2_adh} et \eqref{align_somme_2_vna}, ainsi que les termes \eqref{align_somme_2_0} et \eqref{align_somme_2_n}. Par la définition de MIN tous les termes complétés sont positifs.
    		\end{itemize}
    		Ainsi la somme est positive comme attendue.
    		
    		Ceci conclut le cas \ref{item_cas_difficile} et donc le point \ref{prop_cellule_identite_appartenir} du théorème \ref{prop_cellule_identite}.
    	\end{enumerate}

    	Montrons maintenant le point \ref{prop_cellule_identite_bord} du théorème \ref{prop_cellule_identite}. Supposons maintenant que $c\in \V(\id)\cap\V(f)$. Nous voulons montrer que $f$ est de caractéristique Jonquières dont le point-base de multiplicité maximale de l'inverse est $p_0$. Par définition : \begin{align*}
    	c\cdot \l&=c\cdot\isome{f}(\l)\\
    	&=\isome{f}^{-1}(c)\cdot \l.
    	\end{align*}
    	Ainsi, d'après le fait \ref{fait_p_0_pas_maximal}, le point $p_0$ doit être un point-base de $f^{-1}$ de multiplicité maximale.
    	De plus, la somme \eqref{equation_somme_terme_n_lambda} appliquée à $f^{-1}$ doit être nulle.  
    	Suivant si nous nous trouvons dans le cas \ref{item_cas_moins_difficile} ou dans le cas \ref{item_cas_difficile}, nous utilisons la répartition des termes faite dans \ref{item_cas_moins_difficile} ou dans \ref{item_cas_difficile}. Comme $c\in\V(\id)$, nous avons déjà montré, dans la première partie de la preuve, que tous les termes de ces sommes sont positifs ou nuls. Nous devons étudier à quelles conditions ils sont en fait tous nuls. 
    	\begin{enumerate}[wide, label=\alph*),itemsep=0pt]
    		\item \label{cas1} Les points-base de $f^{-1}$ ne peuvent pas satisfaire la condition du cas \ref{item_cas_difficile}. Supposons que ce soit le cas. Alors, le terme \eqref{align_somme_2_av} est nul ce qui implique en prenant le cas d'égalité des inégalités \eqref{eq_av} que $\lambda_{v_{t_0}}=n-\lambda_0$. Considérons la suite minimale de points du support de $f^{-1}$, hors $p_0$, qu'il a fallu éclater pour obtenir $p_{v_{t_0}}$ et notons-les $p_{i_1},\dots, p_{i_k}, p_{i_{k+1}}$ où $p_{i_{k+1}}=p_{v_{t_0}}$. Comme $v_{t_0}\in \text{VNA}$, par positivité des excès, les indices $i_1,\dots,i_k$ appartiennent à $\text{VNA}\cup \text{ADH}$. D'après le fait \ref{fait_max_1ptindice_ADH_VNA}, chacun est adhérent uniquement au point précédent sauf les points indicés par ADH qui sont également adhérents à $p_0$. Notons $k'$ le plus petit entier tel que $p_{i_{k'}}$ ne soit pas adhérent à $p_0$. Considérons $k'-2$ points de $\P^2$ (pas forcément dans le support de $c$) tels que deux points de cette famille ne soient pas alignés avec $p_0$. D'après le lemme \ref{lemme_support_Jonquieres} ces points ainsi que les points $p_0, p_{i_1},\dots, p_{i_k'}$ forment le support d'une application de Jonquières $\j$ de degré $k'$. Nous obtenons la contradiction suivante en utilisant la positivité des excès au-dessus des points $\{p_{i_s}\}_{1\leq s\leq k}$ : 
    		\begin{align}\label{contradiction_1}
    		\begin{split}
    			0&\underset{\eqref{eq_condition_sur_c}}{\leq }\isome{\j}(c)\cdot\l-c\cdot \l \\
    		    		&\underset{\eqref{lemme_calcul_fc_id_c}}{\leq} (k'-1)n-(k'-1)\lambda_0-\lambda_{i_1}-\dots-\lambda_{i_k'}\\
    		    		&\underset{\ref{proprietes_c}.\ref{propriete_classe_exces}}{\leq} (k'-1)n-(k'-1)\lambda_0 - k'\lambda_{v_{t_0}}\\
    		    		&= -(n-\lambda_0)\underset{\ref{propriete_classe_carre}}{<}0.
    		\end{split}
    		\end{align}
    		Par conséquent, dans le cas \ref{item_cas_difficile} il n'existe pas de $f$ telle que $c\in\V(\id)\cap\V(f)$.
    		\item\label{cas_2} Les points-base de $f^{-1}$ doivent donc satisfaire les conditions du cas \ref{item_cas_moins_difficile}.
    		Tous les termes de la somme du cas \ref{item_cas_moins_difficile} étant nuls, c'est en particulier le cas du terme  $\tilde{m}_{a_k}(n-\lambda_{a_1}-\lambda_{a_2}-\dots-\lambda_{a_k})$ de \eqref{align_somme_1_adh}. 
    		Par hypothèse sur le choix de l'indice $k$, $\tilde{m}_{a_k}$ est non nul, c'est donc $n-\lambda_{a_1}-\lambda_{a_2}-\dots-\lambda_{a_k}$ qui est nul. L'équation \eqref{eq_terme_adh} implique que $k\leq 2$ puisque pour $k=1$ et $k=2$ les termes ont été complétés à l'aide des $\lambda_i$ où $i\in\text{MIN}$. Mais le fait \ref{fait_positif_0} implique qu'alors $\lambda_i=n-\lambda_0$ ce qui est absurde puisque $i\in\text{MIN}$. Ainsi $k=0$ et il n'y a pas de termes \eqref{align_somme_1_adh}. Nous obtenons de même qu'il n'y a pas de termes \eqref{align_somme_1_vna}. Par le fait \ref{fait_positif_0}, le terme en $\lambda_i$ de \eqref{align_somme_1_n} est strictement positif. Par conséquent, pour que \eqref{align_somme_1_n} soit nul, il faut que son poids qui est égal à $d-1-m_0$ le soit. Ceci implique que $m_0=d-1$. Par conséquent $f^{-1}$ est une application de caractéristique Jonquières et dont le point-base maximal est $p_0$ d'après le point \ref{eq_m_0_jonquieres} du lemme \ref{lemme_equations_evidentes}. De plus, en notant $p_{i_1},\dots,p_{i_{2d-2}}$ les petits points-base de $f^{-1}$, la condition $c\cdot\l=\isome{j^{-1}}\cdot\l$ se réécrit grâce à l'équation \eqref{lemme_calcul_fc_id_c} : 
    		\[n=\lambda_0+\frac{1}{d-1}\sum\limits_{i\in\{i_1,\dots,i_{2d-2}\}}\lambda_i.\]
    		Ceci achève la preuve du théorème \ref{prop_cellule_identite}.\qedhere
    	\end{enumerate} 
    \end{proof}
    
    \begin{ex}\label{ex_classe_speciale2}
    	Il existe des classes spéciales dans la cellule $\V(\id)$. Considérons $p_0\in\P^2$ et deux points libres et adhérents à $p_0$ notés $p_1$ et $p_2$. Considérons la classe \[c=\frac{1}{5}(7\l-4e_{p_0}-2e_{p_1}-2e_{p_2}).\]
    	Montrons qu'elle appartient à $\orbl$. 
    	Elle est d'auto-intersection $1$. Les excès sont positifs. Elle est positive contre l'anti-canonique. Montrons qu'elle satisfait également la condition de Bézout. C'est clair pour les droites. Pour les coniques aussi (à noter que la seule conique passant par les trois points est la réunion de deux droites). Il existe une cubique $C$ passant avec multiplicité $2$ en $p_0$ et avec multiplicité $1$ aux points $p_1$ et $p_2$. Considérons une courbe $D$ de $\P^2$ passant avec multiplicité $\mu_i$ aux points $p_i$. Nous avons alors d'après le théorème de Bézout : \[3d=C\cdot D\geq 2\mu_0+\mu_1+\mu_2.\]
    	Par conséquent, $7d\geq 6d\geq 4\mu_0+2\mu_1+2\mu_2$. Et la condition de Bézout est vérifiée pour toute courbe.
    	Ainsi, la classe $c$ appartient à $\orbl$ (Définition \ref{proprietes_c}). 
    	
    	De plus, les points $p_1$ et $p_2$ sont adhérents à $p_0$ et le degré de $c$ est strictement plus petit que la somme de ses trois plus grandes multiplicités. Par conséquent, $c$ est une classe spéciale. Montrons que $c\in\V(\id)$. 
    	
    	D'après le théorème \ref{prop_cellule_identite}, il suffit de vérifier que pour toute application $\j$ de caractéristique Jonquières dont l'inverse a pour point-base maximal $p_0$ nous avons \[\isome{\j}(c)\cdot \l\geq \l\cdot c  \Leftrightarrow c\cdot \isome{\j}^{-1}(\l) \geq c\cdot \l.\] 
    	
    	Il n'existe pas d'application quadratique dont les points-base ont cette configuration (par positivité des excès). Soit $\q$ une application quadratique dont les points-base de l'inverse sont les points $p_0$, $p_1$ et un troisième noté $q$. 
    	Dans ce cas, nous avons \[c\cdot\isome{\q}(\l)=\frac{1}{5}(14-4-2)=\frac{8}{5}>\frac{7}{5}=c\cdot \l.\]
    	Pour tout $d\geq 3$, il existe une application de Jonquières $\j$ de degré $d$ telle que les points $p_0$, $p_1$ et $p_2$ soient des points-base de $\j^{-1}$ avec $p_0$ point-base maximal. 
    	Dans ces cas, nous avons : 
    	\[c\cdot\isome{\j}(\l)=\frac{1}{5}(7d-4(d-1)-2-2)=\frac{3d}{5}>\frac{7}{5}.\]
    	Par conséquent, $c$ appartient à la cellule identité et n'appartient à aucune autre cellule.
    \end{ex}
    
    L'exemple suivant montre que les germes de cellules contenant une classe spéciale en commun avec la cellule $\V(\id)$ ne sont pas forcément des applications de caractéristique Jonquières dont les points-base sont en configuration spéciale.
    
    \begin{ex}\label{classe_spéciale1}
    	Reprenons la même configuration que dans l'exemple précédent. Considérons deux points $p_1$ et $p_2$ libres et adhérents à un point $p_0\in\P^2$. Posons :
    	\[c=\frac{1}{\sqrt{23}}(7\l-4e_{p_0}-3e_{p_1}-e_{p_2}).\]
    En faisant exactement les mêmes étapes que dans l'exemple précédent, excepté pour l'inégalité de Bézout où nous devons en plus remarquer que $d\geq \mu_1$ nous montrons dans un premier temps que la classe $c$ est dans $\orbl$. Puis en faisant les mêmes calculs d'intersection, nous montrons que $c$ appartient à $\V(\id)$ et aux cellules dont le germe est une application quadratique dont les points-base de l'inverse sont les points $p_0$, $p_1$ et un troisième point. De plus, $c$ n'appartient à aucune autre cellule. 
    \end{ex}
\section{Cellules adjacentes à la cellule associée à l'identité}\label{section_cellule_adjacente}

 Deux cellules de Voronoï sont dites \emph{adjacentes} si leur intersection est non vide. Dans cette partie, nous déterminons les germes des cellules de Voronoï qui sont adjacentes à la cellule $\V(\id)$ (Théorème \ref{thm_principal_germe_cellule_voisine_identite}) et nous caractérisons les classes se trouvant dans de telles intersections.
    
\begin{thm}\label{thm_principal_germe_cellule_voisine_identite}
	Soit $c=n\l-\sum\limits_{i\in I}\lambda_ie_{p_i}\in\orbl$ une classe ordonnée.
\begin{enumerate}
	\item Si $c$ est une classe spéciale appartenant à $\V(\id)\cap\V(f)$ alors $f$ est une application de caractéristique Jonquières dont l'inverse a pour point-base maximal $p_0$. De plus en notant $p_{i_1},\dots,p_{i_{2d-2}}$ les petits points-base de $f^{-1}$, $c$ vérifie : 
	\[n=\lambda_0+\frac{1}{d-1}\sum\limits_{i\in\{i_1,\dots,i_{2d-2}\}}\lambda_i.\]
	\item Dans le cas où $c$ n'est pas une classe spéciale, quitte à permuter l'ordre des points $p_i$ qui ont même multiplicité $\lambda_i$, nous avons $c\in\V(\id)\cap\V(f)$ si et seulement si nous sommes dans l'une des trois situations suivantes.
	\begin{enumerate}[label=\alph*)]
		\item\label{item_quadra} L'application $f$ est quadratique, $p_0$, $p_1$ et $p_2$ sont les points-base de $f^{-1}$ et $n=\lambda_0+\lambda_1+\lambda_2$ où $\lambda_2$ peut éventuellement être nulle.
		\item\label{item_jonq}L'application $f$ est de caractéristique Jonquières non quadratique, $p_0$ est le point-base de multiplicité maximale de $f^{-1}$, les points $\{p_i\}_{1\leq i\leq 2d-2}$ sont ses autres points-base et \[c=n\l -\lambda_0e_{p_0} -\frac{n-\lambda_0}{2}\sum\limits_{i=1}^{2d-2}e_{p_i}-\sum\limits_{i\geq  2d-1}\lambda_ie_{p_i}\] où $\lambda_0\geq\frac{n-\lambda_0}{2}\geq\lambda_i$ pour tout $i\geq 2d-1$.
		\item\label{item_gen}L'application $f$ n'est pas de caractéristique Jonquières et possède $r\in\{6,7,8\}$ points-base en position presque générale, les points $\{p_i\}_{0\leq i \leq r-1}$ sont les points-base de $f^{-1}$ et 
		\[c=n\l -\frac{n}{3}\sum\limits_{i=0}^{r-1}e_{p_i}-\sum\limits_{i\geq  r}\lambda_ie_{p_i}\] où $\frac{n}{3}\geq\lambda_i$ pour tout $i\geq r$.
	\end{enumerate}
\end{enumerate}  
\end{thm}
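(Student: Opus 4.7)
Le point 1 est précisément le second point du théorème \ref{prop_cellule_identite}.

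Pour le sens direct du point 2, je suppose $c$ non spéciale appartenant à $\V(\id) \cap \V(f)$. L'appartenance à $\V(f)$ équivaut à $\isome{f^{-1}}(c) \cdot \l = c \cdot \l$, ce qui, par le lemme \ref{lemme_reformulation_identite} appliqué à $f^{-1}$, s'écrit
\[0 = \sum_{\{\!\{i,j,k\}\!\} \in T} (n - \lambda_i - \lambda_j - \lambda_k)\]
pour toute partition $T$ du multi-ensemble $E$ associé aux points-base $p_i$ et à leurs multiplicités $m_i'$ de $f^{-1}$ en $d-1$ triplets d'indices deux à deux distincts. La non-spécialité de $c$ donne, via le corollaire \ref{rmq_classe_non-spe_V(id)}, que $n - \lambda_0 - \lambda_1 - \lambda_2 \geq 0$, donc $n - \lambda_i - \lambda_j - \lambda_k \geq 0$ pour tout triplet d'indices distincts. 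Ainsi chaque terme de la somme est nul, c'est-à-dire $\lambda_i + \lambda_j + \lambda_k = n$ pour tout triplet apparaissant dans une partition valide.

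Je distinguerais alors trois cas selon la caractéristique de $f$. Si $d = 2$, $f$ est quadratique : l'unique triplet est formé de ses trois points-base, que l'on renomme $p_0, p_1, p_2$ par permutation des points de même multiplicité, donnant le cas \ref{item_quadra}. Si $f$ est de caractéristique Jonquières avec $d \geq 3$, chaque triplet contient l'unique indice maximal $\nu$ et deux petits indices ; faire varier la partition donne, pour toute paire $\{a, b\}$ de petits indices, $\lambda_a + \lambda_b = n - \lambda_\nu$, d'où la valeur commune $(n-\lambda_\nu)/2$. L'inégalité $\lambda_0 + \lambda_1 + \lambda_2 \leq n$ entraîne alors $\lambda_0 = \lambda_\nu$ (sinon, selon que $\lambda_\nu \gtreqless (n-\lambda_\nu)/2$, les trois plus grandes multiplicités dépasseraient $n$), donc $p_0$ peut être choisi comme le point-base maximal de $f^{-1}$, ce qui donne le cas \ref{item_jonq}. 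Sinon, $f$ n'est pas Jonquières, donc $d \geq 4$ (toute Cremona cubique étant Jonquières) et $r \geq 6$ par le lemme \ref{lemme_nbr_pt_base_degre}.\ref{lemme_nbre_pt_base_degre4} ; la flexibilité combinatoire du multi-ensemble $E$ (chaque $m_i' \leq d-1$, somme $3(d-1)$) permet de réaliser n'importe quel triplet d'indices distincts de points-base dans une partition valide, ce qui force par différences $\lambda_i = n/3$ pour tout $0 \leq i \leq r-1$. La condition $c \cdot c = 1$ impose $r \leq 8$, et les obstructions de Bézout (quatre alignés, sept sur une conique) et de la positivité des excès (deux adhérents à un troisième) imposées par $c \in \orbl$ forcent la position presque générale, donnant le cas \ref{item_gen}.

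La réciproque se vérifie directement dans chaque cas : l'égalité $\lambda_0 + \lambda_1 + \lambda_2 = n$ assure $c \in \V(\id)$ via le corollaire \ref{rmq_classe_non-spe_V(id)}, et l'équation de Noether $\sum m_i' = 3(d-1)$ (lemme \ref{lemme_equations_evidentes}.\ref{eq_Noethercanonique}) donne $(d-1)n - \sum_i m_i' \lambda_i = 0$, soit $c \in \V(f)$. Le principal point technique concerne le cas \ref{item_gen} : il faudra justifier soigneusement que tout triplet d'indices de points-base de $f^{-1}$ se réalise effectivement dans une partition valide du multi-ensemble $E$, argument combinatoire qui n'est pas immédiat et nécessitera un examen au cas par cas des caractéristiques possibles.
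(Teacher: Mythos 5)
Votre démonstration suit essentiellement la même route que celle du texte : réécriture de la condition $c\in\V(\id)\cap\V(f)$ comme annulation d'une somme de termes $n-\lambda_i-\lambda_j-\lambda_k$ tous positifs (via le lemme \ref{lemme_reformulation_identite} et le corollaire \ref{rmq_classe_non-spe_V(id)}), variation de la partition, puis disjonction selon la caractéristique de $f$ avec les mêmes ingrédients (équations de Noether, Bézout, positivité des excès, $c^2=1$). Le seul point à rectifier est la borne invoquée dans le cas non-Jonquières : avec seulement $m_i'\leq d-1$ on ne peut pas réaliser un triplet arbitraire dans une partition valide (un indice de multiplicité $d-1$ devrait figurer dans chaque triplet) ; il faut utiliser $m_i'\leq d-2$, qui découle des points \ref{eq_mi_pluspetit_d} et \ref{eq_m_0_jonquieres} du lemme \ref{lemme_equations_evidentes} puisque $f$ n'est pas de caractéristique Jonquières, et c'est exactement ainsi que le texte lève la difficulté combinatoire que vous signalez en fin de preuve — en ne réalisant d'ailleurs que deux triplets disjoints (les trois plus grands et les trois plus petits indices), ce qui suffit par décroissance des $\lambda_i$ et évite tout examen au cas par cas. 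Pensez aussi à conclure explicitement, dans les cas \ref{item_jonq} et \ref{item_gen}, que les multiplicités restantes vérifient $\lambda_i\leq\frac{n-\lambda_0}{2}$ (resp. $\lambda_i\leq\frac{n}{3}$), ce qui s'obtient comme dans votre cas Jonquières en contredisant sinon $n-\lambda_0-\lambda_1-\lambda_2\geq 0$.
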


\begin{rmq}\label{rmq_pt-base_f_f_inverse}
Par le lemme \ref{lemme_config_f_config_inverse}, les points-base de $f^{-1}$ sont en position presque générale si et seulement si il en est de même des points-base de $f$. 
\end{rmq}

\begin{rmq}
Une classe $c$ non-spéciale qui contient seulement deux points dans son support et telle que la somme de ses deux plus grandes multiplicités est égale à son degré, appartient à une infinité de cellules de Voronoï. En effet, $c$ est au bord de $\V(\id)$ et appartient à toutes les cellules $\V(\q)$ où $\q$ est une application quadratique telle que le support de $c$ est inclus dans le support de $\q^{-1}$. Ces cellules sont en nombre infini puisqu'il y a un nombre infini d'applications quadratiques modulo $\PGL(3,\kk)$.
Une classe spéciale peut également appartenir à une infinité de cellules de Voronoï, comme par exemple dans \ref{classe_spéciale1}.  
Dans les autres cas, les classes n'appartiennent qu'à un nombre fini de cellules de Voronoï.

\end{rmq}

\begin{proof}
	Si $c$ est une classe spéciale et $c\in\V(\id)\cap\V(f)$ alors d'après le point \ref{prop_cellule_identite_bord} de la proposition \ref{prop_cellule_identite}, $f^{-1}$ doit être de caractéristique Jonquières et doit avoir $p_0$ comme point-base de multiplicité maximale.
	
	Soit $c=n\l-\sum\limits\lambda_ie_{p_i}\in\orbl$ une classe non spéciale et ordonnée. 
	La classe $c\in\V(f)$ si et seulement si \begin{align*}
	c\cdot \l &=c\cdot \isome{{f}}(\l)\\
	&=\isome{f}^{-1}(c)\cdot \l.
	\end{align*}
	Notons $d$ le degré de $f^{-1}$ et $m_i$ les multiplicités de ses points-base $p_i$. 
	D'après l'équation \eqref{lemme_calcul_fc_id_c} et en reprenant les notations et le résultat du lemme \ref{lemme_reformulation_identite}, il existe un ensemble $T$ de $d-1$ triplets d'indices des points-base de $f^{-1}$ dont chaque triplet est formé d'indices deux à deux disjoints et tel que l'égalité ci-dessus est équivalente à 
	\begin{equation}\label{eq_termes_nuls}
	0=(d-1)n-\sum\limits_{p_i\in\supp(f^{-1})}m_i\lambda_i=\sum\limits_{\{\!\{ i,j,k\}\!\}\in T} n-\lambda_i-\lambda_j-\lambda_k.
	\end{equation}
	Dans la somme de droite les multiplicités $\lambda_i$ apparaissent $m_i$ fois. Comme la classe $c$ n'est pas spéciale, elle appartient à $\V(\id)$ si et seulement si pour tout $i<j<k$ : \begin{equation}\label{eq_4}
	n-\lambda_i-\lambda_j-\lambda_k\geq  n-\lambda_0-\lambda_1-\lambda_2\underset{\ref{rmq_classe_non-spe_V(id)}}{\geq} 0.
	\end{equation} Ainsi $c\in\V(\id)\cap\V(f)$ si et seulement si tous les termes de la somme de droite de \eqref{eq_termes_nuls} sont nuls. Si nous sommes dans l'un des trois cas \ref{item_quadra}, \ref{item_jonq} ou \ref{item_gen} alors $c$ appartient à $\V(\id)\cap\V(f)$. Montrons à présent l'implication et supposons que $c$ appartient à $\V(\id)\cap\V(f)$. Distinguons les cas suivant si $f$ est une application quadratique, si $f$ est une application de caractéristique Jonquières non quadratique et enfin si $f$ n'est pas de caractéristique Jonquières.
	\begin{enumerate}[wide, itemsep=0pt, label=\alph*)]
		\item Si $f$ est une application quadratique dont l'inverse a pour points-base $p_{i_0},p_{i_1},p_{i_2}$ alors la somme \eqref{eq_termes_nuls} s'écrit \[n-\lambda_{i_0}-\lambda_{i_1}-\lambda_{i_2}=0.\]
		Les multiplicités $\lambda_{i_0}$, $\lambda_{i_1}$ et $\lambda_{i_2}$ sont les trois plus grandes multiplicités de $c$ car sinon : 
		\[0=n-\lambda_{i_0}-\lambda_{i_1}-\lambda_{i_2}> n-\lambda_0-\lambda_1-\lambda_2,\] ce qui contredirait l'équation \eqref{eq_4}.
		Ainsi, les points $p_0$, $p_1$ et $p_2$ sont les points-base de $f^{-1}$ et $n=\lambda_0+\lambda_1+\lambda_2$.
		\item Dans le cas où $f$ est de caractéristique Jonquières de degré strictement supérieur à $2$, notons $\lambda_{i_0}$ la multiplicité pour $c$ correspondant au point-base maximal de $f^{-1}$ et $\lambda_{i_1},\dots,\lambda_{i_{2d-2}}$ les multiplicités pour $c$ correspondant aux points-base $p_{i_1},\dots,p_{i_{2d-2}}$ de $f^{-1}$ de sorte que $\lambda_{i_1}\geq\cdots \geq \lambda_{i_{2d-2}}$. 
		La somme \eqref{eq_termes_nuls} se réécrit : 
		\[0= \sum_{j=1}^{d-1}(n-\lambda_{i_0}-\lambda_{i_{2j-1}}-\lambda_{i_{2j}}).\]	
		Comme tous les termes sont positifs ou nuls, ils sont en fait nuls. Ainsi, nous obtenons que $\lambda_{i_1}=\dots=\lambda_{i_{2d-2}}=\frac{n-\lambda_{i_0}}{2}$. De même que précédemment pour ne pas contredire l'équation \eqref{eq_4}, nous devons avoir $\lambda_{i_0}=\lambda_0$ et $\lambda_1=\lambda_{i_1}$ comme attendu.
		\item Considérons enfin le cas où $f$ n'est pas de caractéristique Jonquières. Notons $\lambda_{i_0},\lambda_{i_1},\dots, \lambda_{i_{r-1}}$ les multiplicités de $c$ correspondant aux points-base $p_{i_0},\dots,p_{i_{r-1}}$ de $f^{-1}$ de sorte que $\lambda_{i_0}\geq\lambda_{i_1}\geq \dots \geq \lambda_{i_{r-1}}$. Comme $f$ n'est pas une application de Jonquières elle est de degré au moins $4$ et par le lemme \ref{lemme_nbr_pt_base_degre}.\ref{lemme_nbre_pt_base_degre4} elle possède au moins six points-base. De plus, par les points \ref{eq_mi_pluspetit_d} et \ref{eq_m_0_jonquieres} du lemme \ref{lemme_equations_evidentes}, pour tout $i$, $m_i\leq d-2$. Par conséquent, nous pouvons arranger les termes de la somme \eqref{eq_termes_nuls} de sorte qu'il y en ait un de la forme : $n-\lambda_{i_0}-\lambda_{i_1}-\lambda_{i_2}$ et un autre de la forme : $n-\lambda_{i_{r-3}}-\lambda_{i_{r-2}}-\lambda_{i_{r-1}}$ où $r-3>2$. Comme $c$ appartient à $\V(f)$, ils sont tous les deux nuls :
		\[n=\lambda_{i_0}+\lambda_{i_1}+\lambda_{i_2}=\lambda_{i_{r_2}}+\lambda_{i_{r-1}}+\lambda_{i_r}.\] Par décroissance des $\lambda_{i_j}$ cela signifie qu'ils sont tous égaux à $\frac{n}{3}$. 
		Si maintenant ce ne sont pas les multiplicités maximales pour $c$ cela implique en particulier que $\lambda_0>\frac{n}{3}$ par conséquent : 
		\[ n-\lambda_0-\lambda_1-\lambda_2<  n-3\frac{n}{3}=0.\]Or, ceci contredit l'équation \eqref{eq_4}. Ainsi, nous avons montré que si $c$ appartient à $\V(\id)\cap\V(f)$ alors $c$ est $r'$-symétrique avec $r'\geq r$, $c$ a pour multiplicité maximale $\frac{n}{3}$ et les points-base de $f^{-1}$ sont inclus dans les points du support de $c$ ayant la multiplicité maximale, comme attendu. De plus, d'après le lemme \ref{lemme_symetrique}, une telle classe $r'$-symétrique vérifie $r'\leq 8$, par conséquent $f^{-1}$ ne possède pas plus de $8$ points-base. Si $f^{-1}$ possède quatre points-base alignés ou sept points-base sur une conique cela contredit l'inégalité de Bézout (Définition \ref{proprietes_c}.\ref{propriete_classe_bezout}) et donc cela contredit le fait que $c\in\orbl$. De même si $f^{-1}$ possède deux points-base adhérents à un même troisième point-base nous obtenons une contradiction avec la positivité des excès (Définition \ref{proprietes_c}.\ref{propriete_classe_exces}). La remarque \ref{rmq_pt-base_f_f_inverse} permet de conclure. \qedhere
	\end{enumerate}
\end{proof}
    	
\begin{ex} Voici des exemples de classes appartenant à l'intersection entre $\V(\id)$ et une autre cellule.
\begin{itemize}[wide]
\item La classe :
	\[c_{\j}= \sqrt{\frac{2}{d+1}}\left(\frac{d+1}{2}\l-\frac{d-1}{2}e_{p_0}-\frac{1}{2}\sum\limits_{i=1}^{2d-2}e_{p_i}\right)\] appartient à $\V(\id)\cap\V(\j)$ où $\j$ est une application de caractéristique Jonquières de degré $d$ dont l'inverse a pour point-base maximal $p_0$ et pour autres points-base $\{p_i\}_{1\leq i \leq 2d-2}$.
\item La classe : \[c_f=\frac{3}{\sqrt{9-r}}\l -\frac{1}{\sqrt{9-r}}\sum\limits_{i=0}^{r-1}e_{p_i}\] appartient à $\V(\id)\cap\V(f)$ où $f^{-1}$ une application ayant au plus $8$ points-base $\{p_i\}_{0\leq i\leq r-1}$ qui sont en position presque générale ($r\leq 8$).
\end{itemize}
\end{ex}

Le corollaire \ref{cor_cellule_adjacente} résume tous les germes des cellules adjacentes à $\V(\id)$.

\begin{cor}\label{cor_cellule_adjacente}
	L'ensemble des germes des cellules adjacentes à $\V(\id)$ est constitué de toutes : 
	\begin{enumerate}[wide]
	\item\label{cor_jonq} les applications de caractéristique Jonquières,
	\item\label{cor_f_pas_aligne_conique} les applications ayant au plus $8$ points-base, qui ne sont pas de caractéristiques Jonquières et dont les points-base de l'inverse sont en position presque générale.
	\end{enumerate}
\end{cor}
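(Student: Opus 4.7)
Nous obtiendrons ce corollaire comme conséquence directe du théorème \ref{thm_principal_germe_cellule_voisine_identite} combiné avec les deux classes explicites $c_{\j}$ et $c_f$ présentées juste au-dessus.

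Pour la direction directe, nous considérons une classe quelconque $c\in\V(\id)\cap\V(f)$ et appliquons le théorème \ref{thm_principal_germe_cellule_voisine_identite}. Si $c$ est spéciale, $f$ est de caractéristique Jonquières. Si $c$ n'est pas spéciale, le théorème donne trois possibilités, à savoir $f$ quadratique (donc de caractéristique Jonquières avec $d=2$), $f$ de caractéristique Jonquières de degré $\geq 3$, ou $f$ non de caractéristique Jonquières avec $r\leq 8$ points-base de $f^{-1}$ en position presque générale. Le lemme \ref{lemme_config_f_config_inverse} transfère la position presque générale entre $f^{-1}$ et $f$, et les quatre cas se regroupent en les deux items de l'énoncé.

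Pour la direction réciproque, nous utiliserons les deux classes exhibées dans le bloc d'exemples précédant le corollaire. Pour une application $\j$ de caractéristique Jonquières de degré $d$ avec $p_0$ point-base maximal de $\j^{-1}$ et $\{p_i\}_{1\leq i\leq 2d-2}$ ses autres points-base, l'observation clé est que $c_{\j}$ est, à normalisation près, le milieu $\tfrac{1}{2}(\l+\isome{\j}(\l))$. Les deux extrémités $\l$ et $\isome{\j}(\l)$ appartiennent à $\orbl$ (la seconde par la proposition \ref{action_cremona_orbl}), et la convexité de $\orbl$ (proposition \ref{prop_orbl_convexe}) donne immédiatement $c_{\j}\in\orbl$ ; le théorème \ref{thm_principal_germe_cellule_voisine_identite} place alors $c_{\j}$ dans $\V(\id)\cap\V(\j)$.

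Pour $f$ non de caractéristique Jonquières avec $r\leq 8$ points-base $p_0,\dots,p_{r-1}$ de $f^{-1}$ en position presque générale, le témoin est $c_f$. Les vérifications $c_f^2=1$, positivité des coefficients et positivité contre $-\fcan$ (qui vaut $\sqrt{9-r}$) sont immédiates, et la positivité des excès résulte de la position presque générale qui interdit à deux des $p_i$ d'être adhérents à un même troisième point, rendant chaque excès égal à $\tfrac{1}{\sqrt{9-r}}$ ou nul. L'obstacle principal sera l'inégalité de Bézout, et c'est là qu'on exploitera pleinement l'hypothèse de position presque générale : par la proposition \ref{prop_weak_del_Pezzo_presque_general}, la surface $S$ obtenue en éclatant les $p_i$ est faiblement del Pezzo, donc $-\can_S$ est numériquement effectif. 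Pour toute courbe $C$ de degré $d'$ de multiplicités $\mu_{p_i}$ aux points $p_i$, la transformée stricte $\tilde C$ sur $S$ est effective, d'où $(-\can_S)\cdot\tilde C = 3d'-\sum_{i=0}^{r-1}\mu_{p_i}\geq 0$ ; en divisant par $\sqrt{9-r}$ on obtient exactement la condition de Bézout pour $c_f$. Une fois $c_f\in\orbl$ établi, le lemme \ref{lemme_3_grand_coeff_pos_c_in_id} donne $c_f\in\V(\id)$ (ses trois plus grandes multiplicités sommant exactement à son degré), et le théorème \ref{thm_principal_germe_cellule_voisine_identite} place $c_f$ dans $\V(f)$, ce qui conclut la preuve.
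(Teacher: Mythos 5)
Votre preuve est correcte et suit essentiellement la démarche du papier, qui énonce ce corollaire comme conséquence immédiate du théorème \ref{thm_principal_germe_cellule_voisine_identite} (pour le sens direct) et des deux classes témoins $c_{\j}$ et $c_f$ données dans l'exemple qui précède (pour la réciproque), sans rédiger de démonstration explicite. Votre vérification de $c_{\j}\in\orbl$ par convexité, via le milieu normalisé du segment $[\l,\isome{\j}(\l)]$, est exactement l'argument employé dans la preuve du lemme \ref{lemme_jonq_sym_collées}. La seule divergence réelle porte sur la condition de Bézout pour $c_f$ : vous la déduisez directement du caractère numériquement effectif de $-\can_S$ évalué contre la transformée stricte d'une courbe, alors que l'argument auquel le papier fait allusion (dans la preuve de la proposition \ref{prop_classe_bord_infini_cellule_adjacente}) passe par Riemann-Roch pour produire une cubique anti-canonique passant par les points-base, puis applique Bézout contre cette cubique ; les deux routes sont valides et la vôtre est un peu plus directe.
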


	\begin{rmq}
	Les conditions du point \ref{cor_f_pas_aligne_conique} du corollaire \ref{cor_cellule_adjacente} sont nécessaires. Soient $p_0,\dots,p_4\in\P^2$ tels que trois points ne soient pas alignés. Ce sont les points-base d'une application de Jonquières $\j$ de degré $3$ et de point-base maximal $p_0$. Soit $L$ la droite passant par les points $p_1$ et $p_2$. Considérons trois points formant le support d'une application quadratique $\q_1$ dont deux sont sur $\j(L)$ mais sont différents des points-base de $\j^{-1}$. Alors l'application $\q_1\circ\j$ est une application ayant huit points-base dont quatre alignés et de caractéristique $(6;4,2^4,1^3)$. 
	
	Soit $C$ une conique passant par les points $p_1,\dots,p_4$. Considérons trois points formant le support d'une application quadratique $\q_2$ qui sont sur $\j(C)$ mais qui ne sont pas des points-base de $\j^{-1}$. L'application $\q_2\circ\j$ est une application ayant huit points-base dont sept sur une conique.
	
	Enfin, si maintenant nous composons l'application de Jonquières de degré trois $(x,y)\mapsto(y^3-x,y)$ par une application quadratique dont les trois points-base sont dans $\P^2$ différents du point $[1:0:0]$, la composée possède huit points-base dont deux adhérents à un même troisième. 
	
	Ces trois applications, bien qu'ayant huit points-base, ne sont pas des germes de cellules adjacentes à l'identité d'après le corollaire \ref{cor_cellule_adjacente}.
	\end{rmq}
	
	Le corollaire suivant est une conséquence du corollaire \ref{cor_cellule_adjacente} et du lemme \ref{lemme_nbr_pt_base_degre}.\ref{lemme_nbre_pt_base8_degre}.
	\begin{cor}\label{cor_deg_germe_cel_adjacente_id}
	Les germes des cellules adjacentes à la cellule $\V(\id)$ qui ne sont pas des applications de caractéristique Jonquières sont de degré inférieur ou égal à $17$.
	\end{cor}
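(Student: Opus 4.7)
Le plan est d'utiliser directement les deux résultats cités en amont, à savoir le corollaire \ref{cor_cellule_adjacente} qui classifie les germes possibles des cellules adjacentes à $\V(\id)$, et le point \ref{lemme_nbre_pt_base8_degre} du lemme \ref{lemme_nbr_pt_base_degre} qui majore le degré d'une application de Cremona en fonction du nombre de ses points-base.

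Concrètement, je considérerais une application $f\in\Bir(\P^2)$ telle que $\V(f)$ soit une cellule adjacente à $\V(\id)$ et telle que $f$ ne soit pas de caractéristique Jonquières. D'après le corollaire \ref{cor_cellule_adjacente}, $f$ tombe alors nécessairement dans le second cas de la dichotomie, c'est-à-dire $f$ possède au plus $8$ points-base et les points-base de $f^{-1}$ (donc ceux de $f$, grâce à la remarque \ref{rmq_pt-base_f_f_inverse} ou au lemme \ref{lemme_config_f_config_inverse}) sont en position presque générale. En particulier $f$ a au plus $8$ points-base.

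Il ne reste plus qu'à appliquer le point \ref{lemme_nbre_pt_base8_degre} du lemme \ref{lemme_nbr_pt_base_degre}, qui affirme précisément que si le nombre de points-base de $f$ est au plus $8$, alors le degré de $f$ est majoré par $17$. La conclusion est immédiate. Il n'y a donc pas d'obstacle réel dans cette preuve : c'est une simple concaténation de deux énoncés déjà établis, et la condition d'exclusion sur les applications de caractéristique Jonquières sert uniquement à écarter le premier cas du corollaire \ref{cor_cellule_adjacente}, qui lui ne fournit aucune borne uniforme sur le degré.
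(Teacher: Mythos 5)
Votre démonstration est correcte et suit exactement la même voie que l'article, qui présente ce corollaire comme une conséquence directe du corollaire \ref{cor_cellule_adjacente} (les germes non Jonquières ont au plus $8$ points-base) et du lemme \ref{lemme_nbr_pt_base_degre}.\ref{lemme_nbre_pt_base8_degre} (au plus $8$ points-base implique un degré au plus $17$). Il n'y a rien à redire.
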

	Remarquons que si $\V(f)$ et $\V(g)$ sont deux cellules adjacentes, le segment géodésique $[\isome{f}(\l),\isome{g}(\l)]$ n'est pas forcément inclus dans l'union des cellules $\V(f)\cup\V(g)$. 
	\begin{ex}\label{ex_cellule_adjacente_pas_par_segment_geodesique}
	Considérons une application $f$ dont les points-base de l'inverse sont en position presque générale et qui a pour caractéristique $(4,3^3,1^3)$. Les classes dans l'intersection de $\V(f)$ et $\V(\id)$ sont $6$-symétriques, ce qui n'est le cas d'aucune classe du segment $[\isome{f}(\l),\l]$. Par conséquent le segment traverse au moins une autre cellule de Voronoï. 
	\end{ex}
	Cependant si $f$ est une application de caractéristique Jonquières ou une application symétrique cela n'arrive jamais. C'est l'objet du lemme suivant. Rappelons qu'une application est symétrique si elle est de degré strictement supérieur à $1$ et si les multiplicités de ses points-base sont toutes égales.
	
	\begin{lemme}\label{lemme_jonq_sym_collées}
	Soit $f$ une application symétrique ou de caractéristique Jonquières. Alors nous avons l'inclusion \[[\l,\isome{f}(\l)]\subset \V(\id)\cup\V(f).\] 
	\end{lemme}
	\begin{proof}
	Si $\j$ est une application de Jonquières de degré $d$, dont l'inverse a pour point-base maximal $p_0$ et $\{p_1,\dots,p_{2d-2}\}$ comme petits points-base. Notons 
	\[c_{\j}=\sqrt{\frac{1}{2(d+1)}}\bigg((d+1)\l-(d-1)e_{p_0}-\sum\limits_{i=1}^{2d-2}e_{p_i}\bigg).\] 
	Si $s$ est une application symétrique de degré $d$ dont l'inverse possède $r$ points-base $\{p_0,\dots,p_{r-1}\}$, nous posons \[c_s=\sqrt{\frac{d+1}{2}}\bigg(\l-\frac{1}{3}\sum\limits_{i=0}^{r-1}e_{p_i}\bigg).\]
	Les classes $c_{\j}$ et $c_{s}$ sont les normalisations des milieux des segments de l'espace de Hilbert ambiant reliant respectivement les classes $\l$ et $\isome{\j}(\l)$ et $\l$ et $\isome{s}(\l)$. Par conséquent les classes $c_{\j}$ et $c_s$ appartiennent respectivement aux segments géodésiques $[\l,\isome{\j}(\l)]$ et $[\l,\isome{s}(\l)]$. 
	L'application $s^{-1}$ est aussi une application symétrique de degré $d$ et d'après le lemme \ref{lemme_appli_sym} la multiplicité de ses points-base est égale à $\frac{d+1}{3}$. Par conséquent, elle ne peut pas posséder trois points-base alignés, ni six points-base sur une conique car sinon cela contredirait l'inégalité de Bézout (Proposition \ref{prop_Bezout}). De même, il ne peut pas y avoir deux points-base adhérents à un même troisième sinon cela contredirait la positivité des excès pour $s^{-1}$ (Proposition \ref{prop_consistence}).
	Ainsi en utilisant le corollaire \ref{cor_cellule_adjacente} et le théorème \ref{thm_principal_germe_cellule_voisine_identite}, $c_{\j}\in\V(\id)\cap\V(\j)$ et $c_s\in\V(\id)\cap\V(s)$. Ainsi, $c_{\j}$ est à égale distance de $\l$ et de $\isome{j}(\l)$ et $c_s$ de $\l$ et de $\isome{s}(\l)$ et comme elles appartiennent respectivement aux segments géodésiques $[\l,\isome{\j}(\l)]$ et $[\l,\isome{s}(\l)]$ ce sont les milieux de ces segments. Ainsi, ces segments sont inclus dans la réunion des deux cellules de Voronoï.
\end{proof}

\section{Cellules quasi-adjacentes à la cellule associée à l'identité}\label{section_cellule_quasiadjacente}
	Rappelons que le bord à l'infini de $\orbl$ est constitué des classes du bord de $\H$, c'est-à-dire des classes d'auto-intersection nulle et de degré strictement positif, qui sont limites d'une suite de classes vivant dans l'hyperplan $\{\l=1\}$ et proportionnelles à des classes de $\orbl$. Nous définissons de même le bord à l'infini d'une cellule de Voronoï $\V(f)$. Ce sont les classes du bord à l'infini de $\orbl$ qui sont limites de classes vivant dans l'hyperplan $\{\l=1\}$ et qui sont proportionnelles à des classes de $\V(f)$. Deux cellules ayant une classe en commun dans leur bord à l'infini sont dites \emph{quasi-adjacentes}.
	Dans cette section, le but est de caractériser les applications dont la cellule de Voronoï est quasi-adjacente à la cellule associée à l'identité. Avant cela, nous étudions les classes candidates à être dans de telles intersections et nous montrons au passage que les cellules adjacentes à $\V(\id)$ sont quasi-adjacentes à $\V(\id)$ (Corollaire \ref{cor_ajacente_quasiadjacente}). 

\begin{lemme}\label{lemme_classe_jonq_orbl}
	Soit $\{p_0,\dots,p_{2d-2}\}$ le support d'une application de Jonquières de degré $d$ et de point-base maximal $p_0$. Pour tout $n$ strictement positif et pour tout $\frac{d-1}{d}n\leq \lambda_0<n$ la classe : 
	\[c_{\lambda_0}= n\l -\lambda_0e_{p_0} -\frac{n-\lambda_0}{2}\sum\limits_{i=1}^{2d-2}e_{p_i}\in\PM(\P^2)\]
	est d'auto-intersection positive et est proportionnelle à une classe de $\orbl$. 
\end{lemme}
\begin{proof}
	Pour prouver que pour tout $\frac{d-1}{d}n\leq \lambda_0<n$ la classe $c_{\lambda_0}$ est proportionnelle à une classe de $\orbl$, nous devons montrer que cette classe satisfait les points \ref{propriete_coeff_classe}, \ref{propriete_classe_canonique}, \ref{propriete_classe_exces} et \ref{propriete_classe_bezout} de la définition \ref{proprietes_c}.

	Pour $\lambda_0 <n$, les multiplicités de la classe $c_{\lambda_0}$ sont toutes positives et donc cette classe satisfait \ref{proprietes_c}.\ref{propriete_coeff_classe}.
	Montrons que pour $n$ strictement positif fixé, et pour $\lambda_0$ satisfaisant $\frac{d-1}{d}n\leq \lambda_0<n$, $c_{\lambda_0}$ est d'auto-intersection positive et vérifie les points \ref{propriete_classe_canonique}, \ref{propriete_classe_exces} et \ref{propriete_classe_bezout} de la définition \ref{proprietes_c}. 
	\begin{itemize}[wide]
		\item Posons \[f(\lambda_0)=c_{\lambda_0}^2=n^2-\lambda_0^2-\Big(\frac{n-\lambda_0}{2}\Big)^2(2d-2).\] Alors $f'(\lambda_0)=-\lambda_0(d+1)+n(d-1)$. Ainsi la fonction $f$ est croissante sur l'intervalle $[\frac{n}{3},\frac{n(d-1)}{d+1}]$ puis décroissante sur l'intervalle $[\frac{n(d-1)}{d+1},n[$. De plus, $\underset{\lambda_0\rightarrow n}{\lim}f(\lambda_0)=0$. Par conséquent, pour $\frac{d-1}{d+1}n\leq \frac{d-1}{d}n\leq \lambda_0<n$, $c_{\lambda_0}$ est d'auto-intersection positive. 
		\item La condition \ref{propriete_classe_canonique} qui est la positivité contre l’anti-canonique correspond à : \[3n-\lambda_0-(d-1)(n-\lambda_0)\geq 0.\] Elle est vérifiée sans restriction sur $\lambda_0$ lorsque $d=2$ et sinon $\lambda_0$ doit satisfaire : 
		\[\lambda_0 \geq \frac{n(d-4)}{d-2}.\]
		De plus, pour $d>2$ nous avons $\frac{d-1}{d}\geq \frac{d-4}{d-2}$ et cette condition est satisfaite pour $\lambda_0\geq \frac{d-1}{d}n$.
		\item Nous devons vérifier à quelle condition sur $\lambda_0$, les excès en tous les points du support de $c_{\lambda_0}$ sont positifs (Définition \ref{proprietes_c}.\ref{propriete_classe_exces}). D'après la remarque \ref{rmq_pas_deuxpoints_adherents_petit_jonquieres}, il suffit juste de regarder l'excès au-dessus du point $p_0$. De plus, il y a au plus $d-1$ points adhérents à $p_0$. Nous voulons avoir : \[\lambda_0\geq (d-1)\frac{n-\lambda_0}{2}.\] 
		Ceci est satisfait lorsque $\lambda_0\geq  \frac{d-1}{d+1}n$, ce qui est le cas lorsque $\lambda_0\geq\frac{d-1}{d}n$.
		\item Nous nous intéressons à présent à l'inégalité de Bézout (Définition \ref{proprietes_c}.\ref{propriete_classe_bezout}). Soit $D$ une courbe de degré $\delta$ passant avec multiplicités $\{\mu_i\}_{0\leq i\leq 2d-2}$ en les points $\{p_i\}_{0\leq i\leq 2d-2}$. Nous cherchons à savoir pour quelles valeurs de $\lambda_0$, le terme suivant est positif : 
		\begin{equation}\label{eq_bezout_jonq}
		n\delta-\lambda_0\mu_0-\frac{n-\lambda_0}{2}\sum_{i=1}^{2d-2}\mu_i = n\bigg(\delta-\frac{1}{2}\sum_{i=1}^{2d-2}\mu_i\bigg)+\lambda_0\bigg(\frac{1}{2}\sum_{i=1}^{2d-2}\mu_i-\mu_0\bigg).
		\end{equation}
		Si toutes les multiplicités $\{\mu_i\}_{1\leq i\leq 2d-2}$ sont nulles alors le terme \eqref{eq_bezout_jonq} est égal à $n\delta-\lambda_0\mu_0> n(\delta-\mu_0)\geq 0$ et est positif. Considérons le cas où au moins l'un des $\mu_i$ est strictement positif pour $1\leq i\leq 2d-2$. Par positivité des excès des points par lesquels passe la courbe $D$ il existe alors un point de $\P^2$ ou un point libre et adhérent à $p_0$, noté $p_{i_k}\in \{p_i\}_{1\leq i\leq 2d-2}$ tel que $\mu_{i_k}>0$. 
		En considérant la droite passant par les points $p_0$ et $p_{i_k}$ le théorème de Bézout pour la courbe $D$ implique que $\delta\geq \mu_0+\mu_{i_k}$. En particulier, $\delta$ est strictement supérieur à $\mu_0$ : $\delta>\mu_0$.
		\begin{itemize}
			\item Si $\delta\geq\frac{1}{2}\sum_{i=1}^{2d-2}\mu_i$ et $\frac{1}{2}\sum_{i=1}^{2d-2}\mu_i\geq\mu_0$ alors \eqref{eq_bezout_jonq} est positif pour tout $\lambda_0$ positif donc en particulier pour $\frac{d-1}{d}n\leq \lambda_0<n$.
			\item Considérons le cas où $\delta\geq\frac{1}{2}\sum_{i=1}^{2d-2}\mu_i$ et $\frac{1}{2}\sum_{i=1}^{2d-2}\mu_i<\mu_0$. Alors, pour tout $\lambda_0$ positif le terme \eqref{eq_bezout_jonq} est supérieur ou égal à : 
			\[(n-\lambda_0)\bigg(\mu_0-\frac{1}{2}\sum_{i=1}^{2d-2}\mu_i\bigg)\geq 0.\] C'est donc en particulier le cas pour $\frac{d-1}{d}n\leq \lambda_0<n$.
			\item Si $\delta<\frac{1}{2}\sum_{i=1}^{2d-2}\mu_i$ alors $\frac{1}{2}\sum_{i=1}^{2d-2}\mu_i>\mu_0$. Par conséquent, pour que \eqref{eq_bezout_jonq} soit positif, il faut que nous ayons :
			\begin{equation}\label{eq_bezout_lambda_0}
			n>\lambda_0\geq nt \hspace{1cm}\text{ où }\hspace{1cm} t =\frac{\frac{1}{2}\sum_{i=1}^{2d-2}\mu_i-\delta}{\frac{1}{2}\sum_{i=1}^{2d-2}\mu_i-\mu_0}.
			\end{equation}
			Cependant, le facteur $t$ dépend de la courbe $D$ et nous voulons un coefficient $\lambda_0$ uniforme. En particulier, il ne faudrait pas qu'il existe une suite de courbes $\{D_i\}$ telle que la suite de facteurs $\{t_i\}$ associés converge vers $1$. Montrons que nous pouvons majorer $t$ indépendamment de $D$. Considérons une courbe $C$ du système linéaire associé à l'application de Jonquières. En appliquant le théorème de Bézout entre $C$ et $D$, nous obtenons : 
			\[0\leq \delta d-\mu_0(d-1)-\sum\limits_{i=1}^{2d-2}\mu_i.\]
			Cela se réécrit : 
			\begin{equation}\label{eq_bezout_jonq_D}
			-\delta \leq -\frac{d-1}{d}\mu_0-\frac{1}{d}\sum\limits_{i=1}^{2d-2}\mu_i.
			\end{equation}
			Par conséquent, nous avons : 
			\begin{align*}
			\frac{1}{2}\sum_{i=1}^{2d-2}\mu_i-\delta & \underset{\eqref{eq_bezout_jonq_D}}{\leq}\Big(\frac{1}{2}-\frac{1}{d}\Big)\sum\limits_{i=1}^{2d-2}\mu_i -\frac{d-1}{d}\mu_0\\
			& =\frac{d-2}{2d}\sum\limits_{i=1}^{2d-2}\mu_i -\frac{d-1}{d}\mu_0\\
			&<\frac{d-1}{d}\bigg(\frac{1}{2}\sum_{i=1}^{2d-2}\mu_i-\mu_0\bigg).
			\end{align*}
			Ainsi, $t$ est majoré par $\frac{d-1}{d}$ et pour tout $\frac{d-1}{d}n\leq \lambda_0<n$, $\lambda_0$ satisfait l'équation \eqref{eq_bezout_lambda_0}.
		\end{itemize}
	\end{itemize}
	Finalement, nous avons montré que pour tout $\lambda_0$ tel que $\frac{d-1}{d}n\leq \lambda_0<n$, la classe $c_{\lambda_0}$ est d'auto-intersection strictement positive et est proportionnelle à une classe de $\orbl$ comme annoncé.
\end{proof}

La proposition suivante fournit des exemples de cellules quasi-adjacentes à la cellule associée à l'identité.
\begin{prop}\phantomsection	\label{prop_classe_bord_infini_cellule_adjacente}
\begin{enumerate}[wide]
		\item\label{prop_classe_bord_infini_jonq} Soit $\j$ une application de caractéristique Jonquières dont l'inverse a pour point-base maximal $p_0\in\P^2$. Alors, nous avons : \[\l-e_{p_0}\in\partial_{\infty}\V(\id)\cap\partial_{\infty}\V(\j).\]
		\item\label{prop_classe_bord_infini_gen} Soit $f$ une application dont les $r\leq 8$ points-base $\{p_0,\dots,p_{r-1}\}$ sont en position presque générale. Alors pour toute complétion en un ensemble de neuf points $\{p_0,\dots,p_8\}$ en position presque générale, nous avons : 
		\[3\l-\sum_{i=0}^{8}e_{p_i}\in\partial_{\infty}\V(\id)\cap\partial_{\infty}\V(f).\]
	\end{enumerate}
\end{prop}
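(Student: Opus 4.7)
Le plan consiste à prouver les deux parties en construisant, dans chaque cas, une famille de classes dans l'hyperplan affine $\{\l = 1\}$, proportionnelles à des classes de $\V(\id) \cap \V(\j)$ (resp.\ $\V(\id) \cap \V(f)$) et convergeant vers la classe limite. La forme précise de ces classes est donnée par le Théorème~\ref{thm_principal_germe_cellule_voisine_identite}.

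Pour~\ref{prop_classe_bord_infini_jonq}, j'applique le Lemme~\ref{lemme_classe_jonq_orbl} avec $n=1$. Pour $\lambda_0 \in [\tfrac{d-1}{d},1)$, la classe $c_{\lambda_0} = \l - \lambda_0 e_{p_0} - \tfrac{1-\lambda_0}{2}\sum_{i=1}^{2d-2} e_{p_i}$ est proportionnelle à une classe de $\orbl$. Comme $\lambda_0 \geq \tfrac{d-1}{d} \geq \tfrac{1}{3}$ entraîne $\lambda_0 \geq \tfrac{1-\lambda_0}{2}$, sa normalisation $\n(c_{\lambda_0})$ satisfait la forme du Théorème~\ref{thm_principal_germe_cellule_voisine_identite}\ref{item_jonq} (ou~\ref{item_quadra} si $d=2$), donc appartient à $\V(\id) \cap \V(\j)$. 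Lorsque $\lambda_0 \to 1$, $c_{\lambda_0} \to \l - e_{p_0}$, classe d'auto-intersection nulle satisfaisant trivialement les conditions de $\partial_\infty \orbl$ (support réduit à $\{p_0\}$, positivité contre l'anti-canonique égale à $2$, inégalité de Bézout se réduisant à $d' \geq \mu_0$).

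Pour~\ref{prop_classe_bord_infini_gen}, avec $\{p_i\}_{0 \leq i \leq r-1}$ les points-base de $f^{-1}$ (qui sont en position presque générale par la Remarque~\ref{rmq_pt-base_f_f_inverse}) et $\{p_r,\ldots,p_8\}$ la complétion donnée, je considère pour $t \in (0,1)$ petit
\[
c_t = 3\l - \sum_{i=0}^{r-1} e_{p_i} - (1-t)\sum_{i=r}^{8} e_{p_i},
\]
d'auto-intersection $(9-r)(2t - t^2) > 0$. La position presque générale assure la positivité des excès : au plus un point de l'ensemble adhère à tout point donné, et cette adhérence va nécessairement d'un point ajouté vers un point antérieur à multiplicité supérieure ou égale. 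La normalisation $\n(c_t)$ est $r$-symétrique de multiplicité maximale $n/3$ aux points-base de $f^{-1}$, donc vérifie les hypothèses du Théorème~\ref{thm_principal_germe_cellule_voisine_identite}\ref{item_gen} (le cas $f$ Jonquières caractéristique avec $d \leq 4$ relevant plutôt de~\ref{item_jonq} dans le cas limite $\lambda_0 = (n-\lambda_0)/2 = n/3$) : $\n(c_t) \in \V(\id) \cap \V(f)$. Lorsque $t \to 0$, $c_t/3 \to \l - \tfrac{1}{3}\sum_{i=0}^{8} e_{p_i}$ dans $\{\l=1\}$, classe proportionnelle à $3\l - \sum_{i=0}^{8} e_{p_i}$, ce qui conclut.

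L'obstacle principal est l'inégalité de Bézout pour la classe limite $3\l - \sum_{i=0}^{8} e_{p_i}$ (et donc aussi pour chaque $c_t$, via $3d' - \sum_{i<r}\nu_i - (1-t)\sum_{i \geq r}\nu_i = (3d' - \sum_{i=0}^{8}\nu_i) + t\sum_{i \geq r}\nu_i$), équivalente à la nefness de $-K$ sur l'éclatement en neuf points en position presque générale, extension au cas $k=9$ de la Proposition~\ref{prop_weak_del_Pezzo_presque_general}. L'argument clé est l'existence d'une cubique $D$ passant par les neuf points (comptage dimensionnel dans le système linéaire des cubiques de $\P^2$, de dimension $10$) ; pour toute courbe irréductible $C$ non composante de $D$, Bézout classique donne $3 \deg(C) \geq \sum_{i=0}^{8} \nu_i$, et les cas où $C$ est composante de $D$ (droite, conique, ou cubique) se traitent directement par les hypothèses de position presque générale (pas de quatre points alignés, pas de sept sur une conique).
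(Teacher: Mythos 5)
Votre démonstration suit essentiellement la même approche que celle de l'article : mêmes familles de classes $c_{\lambda_0}$ (via le lemme \ref{lemme_classe_jonq_orbl} avec $n=1$) et $c_{\epsilon}$, même passage à la limite dans l'hyperplan $\{\l=1\}$, et même recours au théorème \ref{thm_principal_germe_cellule_voisine_identite} pour placer ces classes dans l'intersection des deux cellules. La seule variante est la justification de l'existence d'une cubique passant par les neuf points pour l'inégalité de Bézout — comptage de dimension dans le système linéaire des cubiques de $\P^2$ chez vous, contre surface faiblement del Pezzo (proposition \ref{prop_weak_del_Pezzo_presque_general}) et Riemann--Roch dans l'article — ce qui revient au même.
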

\begin{proof}
	\begin{enumerate}[wide]
		\item Soit $\j$ une application de caractéristique Jonquières de degré $d$ dont l'inverse a pour point-base maximal $p_0\in\P^2$ et pour petits points-base $\{p_i\}_{1\leq i\leq 2d-2}$. Fixons $n$ positif. Soit $\frac{n}{3}\leq \lambda_0<n$ et considérons la classe \[c_{\lambda_0}= n\l -\lambda_0e_{p_0} -\frac{n-\lambda_0}{2}\sum\limits_{i=1}^{2d-2}e_{p_i}\in\PM(\P^2).\] 
		D'après le lemme \ref{lemme_classe_jonq_orbl}, pour tout $\lambda_0$ tel que $ \frac{d-1}{d}n\leq \lambda_0<n$, cette classe est d'auto-intersection strictement positive et est proportionnelle à une classe de $\orbl$. Par conséquent, la normalisation $\eta(c_{\lambda_0})$ de la classe $c_{\lambda_0}$ introduite \ref{application_normalisation} est d'auto-intersection $1$ et appartient à $\orbl$ pour tout $ \frac{d-1}{d}n\leq \lambda_0<n$. Le théorème \ref{thm_principal_germe_cellule_voisine_identite} implique que $\eta(c_{\lambda_0})$ appartient à l'intersection $\V(\id)\cap\V(\j)$. Par conséquent, la classe $\frac{c_{\lambda_0}}{n}$ est proportionnelle à une classe appartenant à l'intersection des cellules $\V(\id)$ et $\V(\j)$. En prenant la limite quand $\lambda_0$ tend vers $n$ nous obtenons :  \[\l-e_{p_0}\in\partial_{\infty}\V(\id)\cap\partial_{\infty}\V(\j)\]comme annoncé.
		\item Considérons pour tout $0<\epsilon\leq 1$ la classe
		\[c_{\epsilon}=3\l -\sum\limits_{i=0}^{r-1}e_{p_i}-(1-\epsilon)\sum\limits_{i=r}^{8}e_{p_i}.\]
		Son auto-intersection est strictement positive : $9-r-(1-\epsilon)^2(9-r)>0$. Nous pouvons par conséquent considérer sa normalisation $\eta(c_{\epsilon})$ qui est d'auto-intersection $1$, qui est positive contre l'anti-canonique et dont les multiplicités et le degré sont positifs. Elle vérifie donc les points \ref{propriete_coeff_classe} et \ref{propriete_classe_canonique} de la définition \ref{proprietes_c}. Le point \ref{propriete_classe_exces} est vérifié car pour tout $0<\epsilon\leq 1$, les points du support de la classe $c_{\epsilon}$ sont en position presque générale et par conséquent les excès sont tous positifs. Par le même argument, l'inégalité de Bézout est vérifiée pour les droites et les coniques. Il reste à vérifier que l'inégalité de Bézout est satisfaite pour les courbes $D$ de degré $d$ supérieur ou égal à $3$ et passant avec multiplicité $\mu_i$ par les points $p_i$ pour $0\leq i\leq 8$. Comme les points $\{p_0,\dots,p_7\}$ sont en position presque générale, d'après la proposition \ref{prop_weak_del_Pezzo_presque_general}, la surface obtenue en éclatant ces points est faiblement del Pezzo. Avec le même argument que dans la preuve du corollaire \ref{cor_cellule_adjacente}, le théorème de Riemann-Roch implique que la forme anti-canonique sur cette surface correspond à un pinceau de cubiques. Par conséquent, il existe une cubique $C$ de ce pinceau passant par le point $p_8$. D'après l'inégalité de Bézout appliquée aux courbes $C$ et $D$, nous avons :
		\[3d \geq \sum\limits_{i=0}^{8}\mu_i\geq \sum\limits_{i=0}^{r-1}\mu_i+(1-\epsilon)\sum\limits_{i=r}^{8}\mu_i.\] Cela montre que la classe $c_{\epsilon}$ satisfait le point \ref{proprietes_c}.\ref{propriete_classe_bezout} et achève donc de prouver que $c_{\epsilon}$ est proportionnelle à une classe de $\orbl$. 
		En utilisant le théorème \ref{thm_principal_germe_cellule_voisine_identite}, nous obtenons finalement que pour tout $0<\epsilon \leq 1$, la classe $c_{\epsilon}$ est proportionnelle à une classe vivant dans l'intersection $\V(\id)\cap\V(f)$. En passant à la limite quand $\epsilon$ tend vers $0$, nous avons :	
		\[3\l-\sum_{i=0}^{8}e_{p_i}\in\partial_{\infty}\V(\id)\cap\partial_{\infty}\V(f)\] qui est le résultat annoncé. \qedhere
	\end{enumerate}
\end{proof}

\begin{cor}\label{cor_ajacente_quasiadjacente}
	Une cellule adjacente à $\V(\id)$ est quasi-adjacente à $\V(\id)$.
\end{cor}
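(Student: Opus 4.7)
La stratégie consiste à combiner la classification des cellules adjacentes (Corollaire \ref{cor_cellule_adjacente}) avec les exemples explicites de classes au bord à l'infini fournis par la Proposition \ref{prop_classe_bord_infini_cellule_adjacente}. Soit $\V(f)$ une cellule adjacente à $\V(\id)$, avec $f\neq\id$. D'après le Corollaire \ref{cor_cellule_adjacente}, nous sommes dans l'un des deux cas : $f$ est de caractéristique Jonquières, ou $f$ possède au plus $8$ points-base dont les points-base de l'inverse sont en position presque générale.

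Premièrement, si $f$ est de caractéristique Jonquières, on peut quitte à pré-composer par un automorphisme supposer que $f$ est une application de Jonquières, de sorte que le point-base maximal $p_0$ de $f^{-1}$ vit dans $\P^2$. Le point \ref{prop_classe_bord_infini_jonq} de la Proposition \ref{prop_classe_bord_infini_cellule_adjacente} fournit alors directement une classe $\l-e_{p_0}\in\partial_\infty\V(\id)\cap\partial_\infty\V(f)$, ce qui montre que $\V(f)$ est quasi-adjacente à $\V(\id)$.

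Deuxièmement, supposons que $f$ ne soit pas de caractéristique Jonquières et possède $r\leq 8$ points-base, les points-base de $f^{-1}$, notés $\{p_0,\dots,p_{r-1}\}$, étant en position presque générale. Pour appliquer le point \ref{prop_classe_bord_infini_gen} de la Proposition \ref{prop_classe_bord_infini_cellule_adjacente}, il faut compléter cette famille en un ensemble $\{p_0,\dots,p_8\}$ de neuf points toujours en position presque générale. C'est l'étape clé : puisque $\kk$ est algébriquement clos, donc infini, et qu'il n'y a qu'un nombre fini de droites reliant deux points existants, un nombre fini de coniques passant par six d'entre eux, et un nombre fini de points adhérents, il suffit de choisir itérativement les points $p_r,\dots,p_8$ assez généraux dans $\P^2$ (par exemple évitant toutes ces configurations interdites) pour préserver la position presque générale à chaque étape. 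Une fois cette complétion faite, le point \ref{prop_classe_bord_infini_gen} de la Proposition \ref{prop_classe_bord_infini_cellule_adjacente} fournit la classe $3\l-\sum_{i=0}^{8}e_{p_i}\in\partial_\infty\V(\id)\cap\partial_\infty\V(f)$ souhaitée.

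La seule difficulté technique est la vérification que la complétion en neuf points en position presque générale est toujours possible; elle est essentiellement immédiate puisque le corps est infini et que les conditions à éviter (alignements de quatre points, sept points sur une conique, deux points adhérents à un troisième) forment un fermé strict à chaque étape. Le reste est une application directe des énoncés déjà démontrés.
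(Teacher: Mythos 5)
Votre preuve est correcte et suit exactement la même démarche que celle de l'article : on invoque le corollaire \ref{cor_cellule_adjacente} pour réduire aux deux types de germes, puis la proposition \ref{prop_classe_bord_infini_cellule_adjacente} fournit dans chaque cas une classe commune au bord à l'infini. Votre vérification explicite qu'une complétion en neuf points en position presque générale existe (le corps étant infini, les configurations interdites sont évitables) est un détail que l'article laisse implicite dans l'énoncé « pour toute complétion » de la proposition \ref{prop_classe_bord_infini_cellule_adjacente}.\ref{prop_classe_bord_infini_gen}, mais cela ne change pas la nature de l'argument.
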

\begin{proof}
	D'après le corollaire \ref{cor_cellule_adjacente}, il n'y a que deux sortes de cellules adjacentes à $\V(\id)$, celles dont le germe est de caractéristique Jonquières et celles dont le germe est une application dont les $r\leq 8$ points-base de l'inverse sont en position presque générale. Par la proposition \ref{prop_classe_bord_infini_cellule_adjacente}, elles sont toutes quasi-adjacentes à $\V(\id)$.
\end{proof}

\begin{rmq}\label{existence_pointe_9_sym}
	Par définition du bord à l'infini $\partial_{\infty}\orbl$, nous obtenons que les classes de type $3\l-\sum_{i=0}^{8}e_{p_i}$ appartiennent à $\partial_{\infty}\orbl$ si et seulement si l'ensemble $\{p_0,\dots,p_8\}$ est en position presque générale. Ces classes appartiennent en fait à $\partial_{\infty}\V(\id)$.
\end{rmq}

\begin{lemme}\label{lemme_classe_9sym_1sym}
	Soit $c\in\partial_{\infty}\V(\id)\cap \partial_{\infty}\V(f)$. Si $c$ n'est pas une classe spéciale alors $c$ est une classe $1$-symétrique pure ou $9$-symétrique pure. Sinon $c$ est une classe $1$-symétrique.
\end{lemme}
\begin{proof}
	Supposons $c=n\l-\sum_{i\in I}\lambda_ie_{p_i}$ ordonnée.
	Si $c$ est une classe spéciale alors les points $p_1$ et $p_2$ sont adhérents au point $p_0$. Par positivité des excès en $p_0$, la multiplicité $\lambda_1$ doit être strictement plus petite que $\lambda_0$. Par conséquent $c$ est une classe $1$-symétrique. 
	
	Considérons maintenant le cas où la classe $c$ n'est pas spéciale. Par définition du bord à l'infini, $c$ est limite d'une suite de classes proportionnelles à des classes de $\V(\id)\cap \V(f)$. Quitte à prendre une sous-suite nous pouvons supposer que les classes de cette suite sont soit toutes spéciales soit toutes non spéciales. Si ces classes sont spéciales le degré de chacune des classes de cette suite est strictement inférieur à la somme de ses trois plus grandes multiplicités. Or $c$ n'est pas une classe spéciale donc à la limite $c$ satisfait $n=\lambda_0+\lambda_1+\lambda_2$. 
	Si toutes les classes de cette suite ne sont pas spéciales alors le degré de chacune est égal à la somme de ses trois plus grandes multiplicités (Théorème \ref{thm_principal_germe_cellule_voisine_identite}). Ainsi, à la limite la classe $c$ possède la même propriété :
	\begin{align}\label{calcul_deg_mult}
	n=\lambda_0+\lambda_1+\lambda_2.
	\end{align}
	
	Si la classe $c$ est $k$-symétrique avec $k\geq 3$, alors ses multiplicités maximales sont égales au tiers du degré. Nous sommes dans les conditions du lemme \ref{lemme_symetrique} et $c$ est $9$-symétrique pure. 
	Supposons à présent que $c$ est $k$-symétrique pour $k\leq 2$. Comme $c^2=0$, nous avons \begin{equation}\label{eq_c_carre_nul}
	n^2=\sum\limits_{i\in I}\lambda_i^2.
	\end{equation}
	D'après \ref{calcul_deg_mult} et \ref{eq_c_carre_nul}, si $\lambda_2=0$ il en est de même de $\lambda_1$ et donc la classe $c$ est $1$-symétrique pure. Montrons que $\lambda_2$ est forcément nul. Supposons le contraire. Nous avons :
	\begin{equation}\label{eq_contradiction}
	\begin{split}
	\lambda_2\Big(3n-\sum\limits_{i\geq 0}\lambda_i\Big) &\underset{\eqref{calcul_deg_mult}}{=} \lambda_2\Big(2n-\sum\limits_{i\geq 3}\lambda_i\Big)\\
	&\underset{\eqref{eq_c_carre_nul}}{=} -n^2+\lambda_0^2+\lambda_1^2+\lambda_2^2+2n\lambda_2+\sum\limits_{i\geq 3}\lambda_i(\lambda_i-\lambda_2)\\
	&\hspace{0.33cm}=-(n-\lambda_2)^2+\lambda_0^2+\lambda_1^2+2\lambda_2^2+\sum\limits_{i\geq 3}\lambda_i(\lambda_i-\lambda_2)\\
	&\underset{\eqref{calcul_deg_mult}}{=}-(\lambda_0+\lambda_1)^2+\lambda_0^2+\lambda_1^2+2\lambda_2^2+\sum\limits_{i\geq 3}\lambda_i(\lambda_i-\lambda_2)\\
	&\hspace{0.33cm}=-2\lambda_0\lambda_1+2\lambda_2^2+\sum\limits_{i\geq 3}\lambda_i(\lambda_i-\lambda_2)
	\end{split}
	\end{equation}

	Comme la classe $c$ est $k$-symétrique pour $k\leq2$, $\lambda_2<\lambda_0$. De plus, comme les multiplicités sont décroissantes le dernier terme de \eqref{eq_contradiction} est strictement négatif. Ceci implique qu'il en est de même de $3n-\sum\limits_{i\geq 0}\lambda_i$ puisque $\lambda_2$ est strictement positif. Mais cela contredit le fait qu'une classe au bord est positive contre l'anti-canonique.
\end{proof}

Remarquons qu'il n'existe peut-être pas de classe spéciale au bord à l'infini de $\V(\id)$. Nous n'avons ni réussi à en construire une ni réussi à montrer qu'il n'en existait pas.

\begin{prop}\label{prop_classe_infini_au_bord}
	L'ensemble des classes non spéciales au bord à l'infini de $\V(\id)$ et qui sont également dans le bord à l'infini d'une cellule quasi-adjacente à $\V(\id)$ est constitué des classes suivantes :
	\begin{enumerate}[wide]
		\item\label{prop_classe_infini_au_bord_cas_jonq} $\l-e_{p_0}$ où $p_0\in\P^2$,
		\item\label{prop_classe_infini_au_bord_cas_gen} $3\l-\sum\limits_{i=0}^{8}e_{p_i}$ où les points de l'ensemble $\{p_0,\dots,p_8\}$ sont en position presque générale.
	\end{enumerate}
\end{prop}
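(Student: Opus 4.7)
The plan is to derive the direct implication by combining Lemma \ref{lemme_classe_9sym_1sym} with the defining axioms of $\partial_{\infty}\orbl$, and then read the converse directly from Proposition \ref{prop_classe_bord_infini_cellule_adjacente}.

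For the direct implication, I would start from a non-special class $c \in \partial_{\infty}\V(\id) \cap \partial_{\infty}\V(f)$ for some $f$ with $\V(f) \neq \V(\id)$. Lemma \ref{lemme_classe_9sym_1sym} asserts that $c$ must be either $1$-symétrique pure or $9$-symétrique pure, so it is enough to determine what each of these two shapes looks like on the boundary. In the $1$-symétrique pure case, writing $c = n\l - \lambda_0 e_{p_0}$, the condition $c^2 = 0$ forces $\lambda_0 = n$, so up to positive scalar $c = \l - e_{p_0}$. To conclude that $p_0 \in \P^2$, I would invoke condition \ref{propriete_classe_exces} of Definition \ref{proprietes_c} at any point $q \in \B(\P^2)$ to which $p_0$ is adherent: the excess at $q$ would equal $0 - 1 = -1$, contradicting $c \in \partial_{\infty}\orbl$. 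Hence $p_0 \in \P^2$, which gives case \ref{prop_classe_infini_au_bord_cas_jonq}. In the $9$-symétrique pure case, writing $c = n\l - m\sum_{i=0}^{8}e_{p_i}$, the condition $c^2 = 0$ forces $m = n/3$, so up to positive scalar $c = 3\l - \sum_{i=0}^{8}e_{p_i}$; Remark \ref{existence_pointe_9_sym} then identifies membership in $\partial_{\infty}\orbl$ precisely with the almost general position of the nine points, yielding case \ref{prop_classe_infini_au_bord_cas_gen}.

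For the converse, Proposition \ref{prop_classe_bord_infini_cellule_adjacente} produces in each case an explicit application whose cell is quasi-adjacent to $\V(\id)$ and contains $c$ in its boundary at infinity: a Jonquières application with $p_0$ as point-base maximal of its inverse for case \ref{prop_classe_infini_au_bord_cas_jonq}, and any application whose inverse has $r \leq 8$ of the nine points as base points (in almost general position) for case \ref{prop_classe_infini_au_bord_cas_gen}. In both situations the cell so produced is distinct from $\V(\id)$ since $\isome{f}(\l) \neq \l$.

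The genuinely hard content has already been packaged upstream, in Lemma \ref{lemme_classe_9sym_1sym} (the rigidity of symmetric classes at infinity) and in Proposition \ref{prop_classe_bord_infini_cellule_adjacente} (the explicit construction of quasi-adjacent cells). So the main obstacle is not in this statement itself but consists in making sure the axioms defining $\partial_{\infty}\orbl$ are read correctly on the candidate classes; the delicate point there is the $9$-symétrique case, where almost general position arises exactly as the combined content of Bézout (\ref{propriete_classe_bezout}) and positivity of excesses (\ref{propriete_classe_exces}) applied at infinity, and this is precisely what Remark \ref{existence_pointe_9_sym} records.
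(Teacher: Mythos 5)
Votre démonstration est correcte et suit essentiellement la même démarche que celle de l'article : le lemme \ref{lemme_classe_9sym_1sym} réduit les candidats aux classes $1$-symétriques pures et $9$-symétriques pures, la remarque \ref{existence_pointe_9_sym} impose la position presque générale dans le second cas, et la proposition \ref{prop_classe_bord_infini_cellule_adjacente} fournit la réciproque. Vous explicitez simplement deux détails que l'article laisse implicites (la normalisation via $c^2=0$ et l'argument d'excès forçant $p_0\in\P^2$ dans le cas $1$-symétrique pur), ce qui est correct et bienvenu.
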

\begin{proof}
	D'après la proposition \ref{prop_classe_bord_infini_cellule_adjacente}, les cas \ref{prop_classe_infini_au_bord_cas_jonq} et \ref{prop_classe_infini_au_bord_cas_gen} sont dans l'intersection entre le bord à l'infini de $\V(\id)$ et celui d'une cellule adjacente. 
	Le lemme \ref{lemme_classe_9sym_1sym} nous dit que les seules classes non spéciales vérifiant l'hypothèse de la proposition sont $1$-symétrique pure ou $9$-symétrique pure. Une classe $9$-symétrique pure doit avoir comme support un ensemble de $9$ points en position presque générale car sinon cela contredirait le fait que cette classe appartient à $\partial_{\infty}\orbl$ (Remarque \ref{existence_pointe_9_sym}).
\end{proof}

\begin{prop}\label{prop_classe_infini_type_jonq}
	Soient $p_0\in\P^2$ et $f\in\Bir(\P^2)$. La classe $\l-e_{p_0}$ appartient à l'intersection $\partial_{\infty}\V(\id)\cap\partial_{\infty}\V(f)$ si et seulement si $f$ est une application de caractéristique Jonquières dont l'inverse a pour point-base maximal $p_0$. 
\end{prop}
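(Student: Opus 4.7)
The plan is to note that the forward implication is exactly Proposition~\ref{prop_classe_bord_infini_cellule_adjacente}.\ref{prop_classe_bord_infini_jonq}, and then to establish the converse. For that, I will assume that $\l - e_{p_0}$ belongs to $\partial_{\infty}\V(\id)\cap\partial_{\infty}\V(f)$, noting that the case $f\in\PGL(3,\kk)$ is vacuous, so I may take $d:=\deg(f)\geq 2$. Using the definition of $\partial_{\infty}\V(f)$, I will choose a sequence $c_k = n_k\l - \sum_{p\in\B(\P^2)}\lambda_{p,k}e_{p}\in\V(f)$ such that the normalized classes $c_k/n_k$ converge to $\l - e_{p_0}$ in the Picard-Manin space; in particular, the coefficient at each fixed point of $\B(\P^2)$ converges, with $\lambda_{p_0,k}/n_k\to 1$ and $\lambda_{p,k}/n_k\to 0$ for every $p\neq p_0$.

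The key step will be to exploit the Voronoï inequality: since $c_k\in\V(f)$, comparing with the cell of the identity gives $\dist(c_k, \isome{f}(\l))\leq \dist(c_k, \l)$, which is equivalent to $c_k \cdot \isome{f}(\l)\leq n_k$. Writing $\isome{f}(\l) = d\l - \sum_{i=0}^{r-1}m_i'e_{q_i}$ as in~\eqref{action_f_sur_l}, where $q_0,\dots,q_{r-1}$ are the base points of $f^{-1}$ with multiplicities $m_i'$, this becomes
\[ (d-1)\, n_k\ \leq\ \sum_{i=0}^{r-1} m_i'\, \lambda_{q_i,k}. \]
Dividing by $n_k$ and passing to the limit through this finite sum, I will obtain $d-1 \leq \sum_{i:\, q_i = p_0} m_i'$. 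Since $d\geq 2$, this will force $p_0$ to be a base point of $f^{-1}$, say $p_0 = q_{i_0}$, with $m_{i_0}'\geq d-1$.

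To conclude, Lemma~\ref{lemme_equations_evidentes}.\ref{eq_mi_pluspetit_d} will give $m_{i_0}' = d-1$, and Lemma~\ref{lemme_equations_evidentes}.\ref{eq_m_0_jonquieres} will then imply that $f^{-1}$ has Jonquières characteristic with $p_0$ as its (unique, when $d\geq 3$) maximal base point. Since having Jonquières characteristic is equivalent to sending a pencil of lines to a pencil of lines, this property transfers from $f^{-1}$ to $f$, giving exactly what the proposition requires. The only mildly delicate point in this scheme is to confirm that convergence of $c_k/n_k$ to $\l-e_{p_0}$ in Picard-Manin space does pass to the coefficients at the finitely many indices $q_i$; but this is automatic from the Hilbert-space convergence, so no real obstacle arises.
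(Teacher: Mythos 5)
Your proof is correct, but the converse is established by a genuinely different and more direct route than the paper's. The paper deduces it from the classification of the finite intersections $\V(\id)\cap\V(f)$ (Théorème~\ref{thm_principal_germe_cellule_voisine_identite}): for an adjacent cell whose germ is not of Jonquières characteristic, every class of $\V(\id)\cap\V(f)$ is at least $6$-symétrique, so the geodesic ray from such a class towards $\l-e_{p_0}$ is trapped in the subspace $\{\lambda_{p_0}=\lambda_{p_1}=\dots\}$, whose boundary at infinity does not contain $\l-e_{p_0}$; a separate reduction (acting by a Jonquières map that fixes $\l-e_{p_0}$) then handles the quasi-adjacent cells that are not adjacent. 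You instead exploit only the single Voronoï inequality $c_k\cdot\isome{f}(\l)\le c_k\cdot\l=n_k$ along a sequence with $c_k/n_k\to\l-e_{p_0}$, pass to the limit through the finite sum over the base points of $f^{-1}$ (legitimate, since norm convergence in the Picard--Manin Hilbert space gives coefficient-wise convergence), and obtain $d-1\le m'_{i_0}$ at $p_0$; Lemme~\ref{lemme_equations_evidentes}.\ref{eq_mi_pluspetit_d} forces equality and Lemme~\ref{lemme_equations_evidentes}.\ref{eq_m_0_jonquieres} gives the Jonquières characteristic, which transfers from $f^{-1}$ to $f$. This is shorter, bypasses the classification of adjacent cells entirely, and actually proves slightly more: you never use $\l-e_{p_0}\in\partial_{\infty}\V(\id)$, so membership in $\partial_{\infty}\V(f)$ alone already forces the conclusion. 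What the paper's heavier route buys is uniformity with the treatment of the $9$-symmetric boundary classes in Proposition~\ref{prop_classes_bord_voisin_id}, where machinery of that kind is needed anyway. The only cosmetic caveat is your dismissal of $f\in\PGL(3,\kk)$ as ``vacuous'': that case is degenerate ($\V(f)=\V(\id)$) rather than empty, but the paper ignores it too, and it does not affect the argument for $d\ge 2$.
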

\begin{proof}
	L'implication indirecte découle de la proposition \ref{prop_classe_bord_infini_cellule_adjacente}.\ref{prop_classe_bord_infini_jonq}.
	
	Soit $f\in\Bir(\P^2)$ telle que $\l-e_{p_0}$ soit dans l'intersection $\partial_{\infty}\V(\id)\cap\partial_{\infty}\V(f)$. Si la cellule $\V(f)$ est une cellule adjacente à la cellule associée à l'identité alors pour toute classe $c\in\V(\id)\cap\V(f)$ nous avons par convexité des cellules de Voronoï l'inclusion suivante : \[[c,\l-e_{p_0}[\ \subset\V(\id)\cap\V(f).\] D'après le théorème \ref{thm_principal_germe_cellule_voisine_identite}, si $f$ n'est pas une application de caractéristique Jonquières les classes de $\V(\id)\cap\V(f)$ sont au moins $r$-symétriques où $r\geq 6$ est le nombre de points-base de $f$. Par conséquent, en notant $p_0,\dots,p_r$ les points-base de $f^{-1}$ toute classe multiple d'une classe du segment géodésique $[c,\l-e_{p_0}[$ vit dans le sous-espace vectoriel de l'espace de Picard-Manin obtenu comme intersection des hyperplans $\{\lambda_{p_0}=\lambda_{p_1}\}$, $\{\lambda_{p_0}=\lambda_{p_2}\}$, \dots, $\{\lambda_{p_0}=\lambda_{p_{r-1}}\}$. Or, la classe $\l-e_{p_0}$ n'appartient pas au bord à l'infini de ce sous-espace ce qui mène à une contradiction. Avec le même type d'argument nous obtenons une contradiction si nous supposons que $f$ est une application de caractéristique Jonquières dont le point-base maximal de $f^{-1}$ n'est pas le point $p_0$.
	
	Montrons que $f$ ne peut pas être le germe d'une cellule quasi-adjacente non adjacente à $\V(\id)$. Raisonnons par l'absurde et supposons le contraire. L'espace $\orbl$ est convexe et les cellules de Voronoï recouvrent $\orbl$. De plus, les cellules adjacentes à la cellule $\V(\id)$ et contenant la classe $\l-e_{p_0}$ sont des applications de caractéristiques Jonquières dont le point-base maximal de l'inverse est $p_0$. Nous pouvons par conséquent supposer que la cellule $\V(f)$ est adjacente à une cellule $\V(\j)$ où $\j$ est un représentant qui est une application de Jonquières et dont l'inverse a pour point-base maximal $p_0$. 
	Faisons agir à présent l'application $\j^{-1}$. Elle fixe la classe à l'infini $\l-e_{p_0}$ ainsi la cellule $\V(\j^{-1}\circ f)$ est une cellule adjacente de la cellule identité et elle possède la classe $\l-e_{p_0}$ dans son bord à l'infini. Par le point précédent, nous obtenons que $\j^{-1}\circ f$ est une application de caractéristique Jonquières dont l'inverse a pour point-base maximal $p_0$. C'est donc aussi le cas de $f$ ce qui mène à une contradiction. Ainsi, $\V(f)$ est forcément adjacente à $\V(\id)$ et $f$ est de caractéristique Jonquières et son inverse possède le point $p_0$ comme point-base maximal.
\end{proof}

\begin{rmq}\label{classe_infini_cas_special}
S'il existe une classe spéciale $c\in\V(\id)\cap\V(f)$ et si $\V(f)$ est adjacente à $\V(\id)$ alors $f^{-1}$ est une application de caractéristique Jonquières dont le point-base maximal est le point de multiplicité maximale de $c$. Cela découle du théorème \ref{thm_principal_germe_cellule_voisine_identite} et du fait qu'il n'existe pas une suite de classes non spéciales de $\V(\id)\cap\V(f)$ convergeant vers $c$ puisque leur degré est égal à la somme de leurs trois plus grandes multiplicités contrairement à $c$. Par le même argument que dans la preuve de la proposition \ref{prop_classe_infini_type_jonq}, ce sont les seules applications dont les cellules de Voronoï contiennent cette classe.
\end{rmq}

\begin{lemme}\label{lemme_image_9_pure}
Soient $f\in\Bir(\P^2)$ et $c=3\l-\sum_{i=0}^{8}e_{p_i}\in\partial_{\infty}\V(\id)$. Si le support de $f^{-1}$ est inclus dans le support de $c$, alors le support de $f^{-1}(c)$ contient neuf points en position presque générale et 
\[\isome{f}^{-1}(c)=3\l-\sum_{q\in\supp(f)}e_{q}-\sum_{\substack{p\in\supp(c)\\ p\notin\supp(f^{-1})}}\isome{f}^{-1}(e_p)\in\partial_{\infty}\V(\id).\]
\end{lemme}
\begin{proof}
Notons $\{m_q\}_{q\in\supp(f)}$ (respectivement $\{m_p'\}_{p\in\supp(f^{-1})}$) les multiplicités des points-base de $f$ (respectivement de $f^{-1}$) et $\{a_{p,q}\}_{q\in\supp(f)}$ les autres coefficients de la matrice caractéristique. Nous avons d'après l'équation \eqref{eq_action_f_sur_c} :
\begin{multline*}
\isome{f}^{-1}(c)=\Big(3d-\sum\limits_{p\in\supp(f^{-1})}m_p'\Big)\l-\sum\limits_{q\in\supp(f)}\Big(3m_q-\sum\limits_{p\in\supp(f^{-1})}a_{p,q}\Big)e_{q}\\-\sum_{\substack{p\in\supp(c)\\ p\notin\supp(f^{-1})}}\isome{f}^{-1}(e_p).
\end{multline*}
		Or d'après les égalités \ref{eq_Noethercanonique} et \ref{eq_aij} du lemme \ref{lemme_equations_evidentes}, nous avons : \[ \begin{cases}
		3d-\sum\limits_{p\in\supp(f^{-1})}m_p'=3\\
		3m_q-\sum\limits_{p\in\supp(f^{-1})}a_{q,p}=1,\  \text{ pour tout } q\in\supp(f)
		\end{cases}\]
		ce qui implique que \[\isome{f}^{-1}(c)=3\l-\sum\limits_{q\in\supp(f)}e_q-\sum_{\substack{p\in\supp(c)\\ p\notin\supp(f^{-1})}}\isome{f}^{-1}(e_p)\] comme attendu. Comme le groupe de Cremona préserve 
	$\partial_{\infty}\orbl$, la classe $\isome{f}^{-1}(c)$ appartient à $\partial_{\infty}\orbl$, elle est par conséquent d'auto-intersection nulle et possède ainsi $9$ points dans son support. D'après la remarque \ref{existence_pointe_9_sym}, cela signifie qu'elle appartient à $\partial_{\infty}\V(\id)$ et que le support de cette classe est un ensemble de neuf points en position presque générale.
\end{proof}

\begin{lemme}\label{lemme_vois_inf}
Soit $c=3\l-\sum\limits_{i=0}^{8}e_{p_i}\in\partial_{\infty}\V(\id)$. Soit $c_1$ une classe appartenant à l'intersection de $V(\id)$ et d'une autre cellule $\V(f)$ telle que $c_1\cdot \frac{1}{3}c< \frac{1}{18}$. Pour toute application $g\in\Bir(\P^2)$ telle que $c_1\in\V(g)$, les points-base de $g^{-1}$ sont inclus dans le support de $c$.
\end{lemme}
\begin{rmq}
Le nombre d'intersection entre une classe de $\orbl$ et de $\partial_{\infty}\orbl$ est bien défini seulement si nous considérons un représentant de la classe à l'infini qui possède un coefficient $1$ devant $\l$. 
\end{rmq}
\begin{proof}
Soit $c_1=n\l-\sum_{i\in I}\lambda_ie_{p_i}$ non-nécessairement ordonnée. Par hypothèse sur $c_1$ et par positivité contre l'anti-canonique (\ref{propriete_classe_canonique} définition \ref{proprietes_c}), nous obtenons: \begin{equation}\label{ineq_mult}
\sum\limits_{i\in I\setminus\{0,\dots,8\}}\lambda_i\leq 3n-\sum\limits_{i=0}^{8}\lambda_i<\frac{1}{6}
\end{equation}
où $\lambda_i$ peut être nulle si le point $p_i$ pour $0\leq i\leq 8$ n'appartient pas au support de $c_1$.
Montrons dans un premier temps que $c_1$ n'est pas une classe spéciale. Supposons le contraire et notons $\mu_0$ son unique multiplicité maximale correspondant au point noté $q_0$. Notons que $q_0$ correspond à l'un des $p_i$ pour $i\in I$. D'après la remarque \ref{rmq_classe_speciale_lambda0_nsur2}, $\mu_0>\frac{n}{2}$. Notons $\mu_1=\max\{\lambda_i\mid 0\leq i\leq 8 \text{ et }p_i\neq q_0\}$ et $q_1$ l'un des points $\{p_i\}_{0\leq i\leq 8}$ correspondant à la multiplicité $\mu_1$. D'après la remarque \ref{existence_pointe_9_sym}, les points du support de $c$ sont en position presque générale. Par conséquent, comme $c_1\in\V(\id)$, pour tout $i\in\{0,\dots,8\}$ avec $p_i\notin\{q_0,q_1\}$, nous avons par la proposition \ref{prop_points_alignes_posgen} et par positivité des excès \ref{propriete_classe_exces} dans le cas où $p_i$ n'est pas un point de $\P^2$ : \[n-\mu_0-\mu_1-\lambda_i\geq 0.\]
Ainsi tous les $\{\lambda_i\}_{0\leq i\leq 8}$ sauf peut-être deux (correspondant à $q_0$ et $q_1$) satisfont : $\lambda_i\leq \frac{n-\mu_0}{2}<\frac{n}{4}$. De plus, en considérant la droite passant par $q_0$ et $q_1$ nous avons par l'inégalité de Bézout \ref{propriete_classe_bezout} : $\mu_0+\mu_1\leq n$. Par conséquent, nous obtenons $\sum\limits_{i=0}^{8}\lambda_i< n+7\frac{n}{4}$.
D'après la remarque \ref{propriete_classe_carre}, $n\geq 1$ par conséquent nous arrivons à la contradiction suivante :
\[\frac{1}{4}\leq \frac{n}{4}=2n-7\frac{n}{4}< 3n-\sum\limits_{i=0}^{8}\lambda_i<\frac{1}{6}.\]
Ainsi, $c$ n'est pas une classe spéciale.

Considérons trois cas, selon si $g$ n'est pas une application de caractéristique Jonquières, est une application de caractéristique Jonquières non quadratique, est une application quadratique.
\begin{itemize}[wide]
\item Si $g$ n'est pas une application de caractéristique Jonquières, par le théorème \ref{thm_principal_germe_cellule_voisine_identite}, pour tout point-base $p_i$ de $g^{-1}$ la multiplicité correspondante est égale à  $\lambda_i=\frac{n}{3}$. De plus, $n$ est supérieur ou égal à $1$ donc $\lambda_i\geq \frac{1}{3}$. Par conséquent, tout point-base $p_i$ de $g^{-1}$ doit être inclus dans le support de $c$ sinon cela contredit l'inégalité \eqref{ineq_mult}.
\item Si $g$ est une application de caractéristique Jonquières non quadratique, alors par le même argument que précédemment nous obtenons que le point-base de multiplicité maximale doit être inclus dans le support de $c$ (la multiplicité pour $c_1$ correspondant à ce point est supérieure ou égale à $\frac{n}{3}$ d'après le théorème \ref{thm_principal_germe_cellule_voisine_identite}).
Si un point-base de $g^{-1}$ de multiplicité $1$ n'est pas dans le support de $c$ alors par l'inégalité \ref{ineq_mult}, nous avons que sa multiplicité correspondante est strictement inférieure à $\frac{1}{6}$ et c'est le cas, par le théorème \ref{thm_principal_germe_cellule_voisine_identite}, de toutes les multiplicités de $c_1$ excepté celle correspondant au point-base maximal de $g^{-1}$. Par conséquent, $\sum\limits_{i=0}^{8}\lambda_i< n + 8\frac{1}{6}$ ce qui mène à la contradiction \[\frac{2}{3}=2-\frac{4}{3}\leq 2n-\frac{4}{3}< 3n-\sum\limits_{i=0}^{8}\lambda_i<\frac{1}{6}.\]
\item Considérons le cas où $g$ est une application quadratique. Si un point-base de $g^{-1}$ n'est pas dans le support de $c$ alors sa multiplicité correspondante est, d'après l'inégalité \ref{ineq_mult}, strictement inférieure à $\frac{1}{6}$. Ceci implique d'après le théorème \ref{thm_principal_germe_cellule_voisine_identite} que $\sum\limits_{i=0}^{8}\lambda_i< n+6\frac{1}{6}$. Par conséquent, avec le même argument que précédemment nous obtenons la contradiction $1\leq \frac{1}{6}$.
\end{itemize}
Ainsi, nous avons montré que pour toute application $g$ dont la cellule contient la classe $c_1$, les points-base de $g^{-1}$ sont inclus dans le support de $c$.
\end{proof}

\begin{prop}\label{prop_classes_bord_voisin_id}
	Soit $\{p_0,\dots,p_8\}$ un ensemble de points en position presque générale. L'application $f$ satisfait \[3\l-\sum_{i=0}^{8}e_{p_i}\in \partial_{\infty}\V(\id)\cap\partial_{\infty}\V(f)\] si et seulement si $f$ possède au plus neuf points-base et le support de son inverse est inclus dans $\{p_0,\dots,p_8\}$.
\end{prop}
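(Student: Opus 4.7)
Le plan est de démontrer les deux implications séparément.

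Pour l'implication $(\Leftarrow)$, supposons que $f$ possède au plus neuf points-base avec $\supp(f^{-1})\subset \{p_0,\dots,p_8\}$. L'action du groupe de Cremona étant par isométries de $\H$ avec $\V(f)=\isome{f}(\V(\id))$, ceci s'étend au bord à l'infini et donne $c\in\partial_{\infty}\V(f)\iff \isome{f^{-1}}(c)\in\partial_{\infty}\V(\id)$. Le lemme \ref{lemme_image_9_pure} calcule $\isome{f^{-1}}(c)$ explicitement comme une autre classe $9$-symétrique pure, qui appartient à $\partial_{\infty}\V(\id)$ par la remarque \ref{existence_pointe_9_sym}; combiné au fait que $c\in\partial_{\infty}\V(\id)$ (même remarque), ceci donne $c\in\partial_{\infty}\V(\id)\cap\partial_{\infty}\V(f)$.

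Pour l'implication $(\Rightarrow)$, supposons $c\in\partial_{\infty}\V(\id)\cap\partial_{\infty}\V(f)$. La stratégie consiste à exhiber une classe $c_1\in \V(\id)\cap \V(f)$ vérifiant $c_1\cdot \frac{1}{3}c<\frac{1}{18}$, de sorte que le lemme \ref{lemme_vois_inf} appliqué avec $g=f$ donne $\supp(f^{-1})\subset \supp(c)=\{p_0,\dots,p_8\}$, d'où aussi la borne sur le nombre de points-base.

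Si $\V(f)$ est adjacente à $\V(\id)$, choisissons $c_0\in \V(\id)\cap \V(f)$ quelconque. Comme $\V(\id)$ et $\V(f)$ sont des sous-ensembles fermés convexes de $\orbl$ (proposition \ref{prop_orbl_convexe}) contenant tous deux $c_0$ et ayant $c$ dans leur bord à l'infini, la demi-droite géodésique $[c_0,c)$ est incluse dans $\V(\id)\cap \V(f)$; le long de celle-ci, la remarque \ref{rmq_limite_intersection_avec_classe_infini} donne $c_1\cdot \frac{1}{3}c\to 0$, ce qui produit le $c_1$ voulu. Si $\V(f)$ est quasi-adjacente à $\V(\id)$ sans être adjacente, on se ramène au cas adjacent en produisant une cellule $\V(h)$ adjacente à la fois à $\V(\id)$ et à $\V(f)$ avec $c\in\partial_{\infty}\V(h)$; le cas adjacent appliqué à $\V(h)$ donne $\supp(h^{-1})\subset\supp(c)$, et par le lemme \ref{lemme_image_9_pure}, la classe $c':=\isome{h^{-1}}(c)$ est encore $9$-symétrique pure dans $\partial_{\infty}\V(\id)$. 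L'isométrie $\isome{h^{-1}}$ envoie alors $\V(f)$ sur $\V(h^{-1}\circ f)$, qui est adjacente à $\V(\id)$ et contient $c'$ dans son bord à l'infini; le cas adjacent appliqué à ce nouveau couple fournit $\supp((h^{-1}\circ f)^{-1})\subset \supp(c')$, et le lemme \ref{lemme_image_9_pure} permet de remonter à $\supp(f^{-1})\subset \supp(c)$.

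L'obstacle principal est l'existence de la cellule intermédiaire $\V(h)$ dans le cas quasi-adjacent non adjacent. Cet argument est parallèle à l'étape analogue de la preuve de la proposition \ref{prop_classe_infini_type_jonq}: en appliquant le corollaire \ref{cor_nbre_fini_cell_traversees} à une géodésique reliant un point de $\V(f)$ suffisamment proche de $c$ à la classe $\l$, et en exploitant la finitude des cellules rencontrées, on extrait une chaîne de cellules adjacentes reliant $\V(f)$ à $\V(\id)$ partageant toutes $c$ dans leur bord à l'infini; la dernière avant $\V(f)$ joue alors le rôle de $\V(h)$.
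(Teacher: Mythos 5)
Votre implication réciproque suit la même voie que le texte (lemme \ref{lemme_image_9_pure} puis remarque \ref{existence_pointe_9_sym}) et est correcte. Votre traitement du cas où $\V(f)$ est adjacente à $\V(\id)$ est également valable et constitue une variante légèrement plus directe du cas de base de la récurrence du texte : la demi-droite géodésique issue d'un point $c_0\in\V(\id)\cap\V(f)$ et d'extrémité $c$ à l'infini reste dans l'intersection par convexité et fermeture des cellules, la remarque \ref{rmq_limite_intersection_avec_classe_infini} fournit un point $c_1$ de cette demi-droite avec $c_1\cdot\frac{1}{3}c<\frac{1}{18}$, et le lemme \ref{lemme_vois_inf} appliqué avec $g=f$ conclut.

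En revanche, le cas quasi-adjacent non adjacent comporte une lacune réelle : l'existence de la cellule intermédiaire $\V(h)$ n'est pas établie. La construction proposée --- appliquer le corollaire \ref{cor_nbre_fini_cell_traversees} à une géodésique reliant un point de $\V(f)$ proche de $c$ à la classe $\l$ --- ne fournit pas une chaîne de cellules \og partageant toutes $c$ dans leur bord à l'infini\fg{} : les points de transition situés près de $\l$ ne sont pas proches de $c$, et rien ne garantit que les cellules traversées aient $c$ à l'infini ; affirmer qu'elles l'ont revient pratiquement à supposer, pour leurs germes, la conclusion que l'on cherche à démontrer. De plus, même en admettant une telle chaîne, votre rédaction ne couvre que le cas d'une chaîne de longueur deux : pour appliquer \og le cas adjacent\fg{} à $\V(h)$, il faut que $\V(h)$ soit adjacente à $\V(\id)$, ce qui échoue dès que la chaîne est plus longue, et la récurrence nécessaire n'est pas explicitée. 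Le texte contourne ces deux difficultés en choisissant le segment géodésique entre deux points \emph{tous deux} proches de $c$ au sens de l'intersection (l'un dans $\V(\id)$ et dans une autre cellule, l'autre dans $\V(f)$) : la bilinéarité de la forme d'intersection assure alors que tout point de ce segment, donc tout point de transition de la chaîne, vérifie $c'\cdot\frac{1}{3}c<\frac{1}{18}$, ce qui permet d'appliquer le lemme \ref{lemme_vois_inf} à chaque étape après conjugaison par $g_k^{-1}$, le lemme \ref{lemme_image_9_pure} contrôlant le support de $\isome{g_k^{-1}}(c)$ le long de la récurrence. C'est cette propriété uniforme de proximité, absente de votre construction, qui rend l'induction possible.
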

\begin{proof}
	Posons $c=3\l-\sum_{i=0}^{8}e_{p_i}$. D'après la proposition \ref{prop_classe_infini_au_bord} elle appartient au bord à l'infini de $\V(\id)$.
	Montrons la réciproque. Si le support de $f^{-1}$ est inclus dans le support de $c$ alors le lemme \ref{lemme_image_9_pure} implique que la classe $\isome{f}^{-1}(c)$ est une classe $9$-symétrique pure dont le support est constitué de $9$ points en position presque générale. Par la proposition \ref{prop_classe_infini_au_bord}, nous obtenons que la classe $\isome{f}^{-1}(c)$ appartient à $\partial_{\infty}\V(\id)$. Par conséquent, $c$ appartient à $\partial_{\infty}\V(f)$ et nous pouvons en conclure que \[c\in\partial_{\infty}\V(\id)\cap \partial_{\infty}\V(f),\]comme attendu.

	Montrons l'implication directe.
	Soit $f\in\Bir(\P^2)$ telle que $c\in \partial_{\infty}\V(\id)\cap\partial_{\infty}\V(f)$. 
	Soit $c_1$ une classe appartenant à l'intersection entre $\V(\id)$ et à au moins une autre cellule de Voronoï telle que $c_1\cdot \frac{1}{3}c<\frac{1}{18}$. Soit $s\in\V(f)$ telle que $s\cdot \frac{1}{3}c<\frac{1}{18}$. Les classes $c_1$ et $s$ existent d'après la remarque \ref{rmq_limite_intersection_avec_classe_infini}.
	Remarquons que toute classe $c'$ du segment géodésique $[c_1,s]$ satisfait $c'\cdot \frac{1}{3}c<\frac{1}{18}$. En effet, ceci découle du fait que la forme d'intersection est bilinéaire, que $(tc_1+(1-t)s)^2\geq 1$ et que $c'=\eta(tc_1+(1-t)s)$ pour $t\in[0,1]$ et où $\eta$ et l'application de normalisation vue dans \ref{application_normalisation}.
	
	Nous paramétrons le segment $[c_1,s ]$ par $\gamma(t)$ pour $t \in [0, 1]$ avec $\gamma(0) = c_1$ et $\gamma(1) = s$.
	  	Nous construisons une suite de points $c_i=(\gamma(t_i))$ par le procédé de récurrence suivant.
	  	Initialisons en posant $t_1 = 0$ et $c_1 = \gamma(0)$.
	  	Pour $i \geq 2$, si $t_{i-1} \in [0,1[$ est déjà construit, nous posons 
	  	\[ t_i = \inf \{t \in ]t_{i-1},1]\mid \exists f \in \Bir(\P^2), \gamma(t) \in \V(f) \text{ et } \gamma(t_{i-1})\notin \V(f)\}.\] Si l'ensemble ci-dessus est vide, nous posons $t_i=1$.
	  	D'après le corollaire \ref{cor_nbre_fini_cell_traversees}, cette suite est finie. Notons $n$ le nombre de termes de cette suite. Par construction et par convexité des cellules de Voronoï, pour chaque $i\geq 1$, il existe $g_i\in\Bir(\P^2)$ telle que $[c_i,c_{i+1}]\subset \V(g_i)$.
	  	
	  	Montrons que pour tout $1\leq k\leq n$, le support de $g_k^{-1}$ est inclus dans le support de $c$.
	  	Raisonnons par récurrence sur $n$ pour prouver le résultat. Si $n=1$ alors $f$ est une cellule adjacente à $\V(\id)$ et $c_1\in\V(\id)\cap\V(f)$. Par le lemme \ref{lemme_vois_inf}, le support de $f^{-1}$ est inclus dans le support de $c$. 
	  	Supposons le résultat vrai pour $1\leq k\leq n-1$ et montrons qu'il reste vrai pour $k+1$. 
	  	Faisons agir $g_k^{-1}$ sur ce segment. Ainsi la classe $\isome{{g_k^{-1}}}(c_{k+1})\in\V(\id)\cap\V(g^{-1}_k\circ g_{k+1})$. 
		De plus, $\isome{{g^{-1}_k}}(c_{k+1})\cdot\frac{1}{3} \isome{{g^{-1}_k}}(c)<\frac{1}{18}$. D'après le lemme \ref{lemme_vois_inf}, nous en déduisons que le support de $(g^{-1}_k\circ g_{k+1})^{-1}$ est inclus dans le support de $\isome{{g^{-1}_k}}(c)$. De plus, par hypothèse de récurrence, le support de $g^{-1}_k$ est inclus dans le support de $c$.
	 D'après le lemme \ref{lemme_image_9_pure}, 
	\[\isome{{g_k^{-1}}}(c)=3\l-\sum_{q\in\supp(g_k)}e_{q}-\sum_{\substack{p\in\supp(c)\\ p\notin\supp(g_k^{-1})}}\isome{{g_k^{-1}}}(e_p)\] et son support est constitué de $9$ points en position presque générale.
	Les points-base de $g_{k+1}^{-1}\circ g_k$ sont inclus dans l'union de l'ensemble des points-base de $g_k$ et de l'ensemble des points images par $g_k^{-1}$ des points-base de $g_{k+1}^{-1}$ qui ne sont pas des points-base de $g_k^{-1}$. Par conséquent les points-base de $g_{k+1}^{-1}$ sont inclus dans le support de $c$ comme attendu. Ceci achève la récurrence.
	\end{proof}
	Le corollaire suivant se déduit de la remarque \ref{classe_infini_cas_special} et des propositions \ref{prop_classe_infini_au_bord}, \ref{prop_classe_infini_type_jonq} et \ref{prop_classes_bord_voisin_id}.
\begin{cor}\label{cor_cellules_quasi_adjacentes}
	L'ensemble des germes des cellules quasi-adjacentes à $\V(\id)$ est constitué de :
	\begin{itemize}[wide]
		\item toutes les applications de caractéristique Jonquières,
		\item de toutes les applications ayant au plus $9$ points-base et dont les points-base de l'inverse sont en position presque générale.
	\end{itemize}
\end{cor}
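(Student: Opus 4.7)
The plan is to combine the four references listed immediately before the corollary and do a case analysis on the shape of a common boundary class. Fix an application $f\in\Bir(\P^2)$; by definition, $\V(f)$ is quasi-adjacent to $\V(\id)$ if and only if there exists a class $c\in\partial_{\infty}\V(\id)\cap\partial_{\infty}\V(f)$. I will split on whether such a $c$ is special or non-special, exhibit (using the earlier propositions) all possibilities for $f$, and then check the converse (that each candidate $f$ truly produces a class in the intersection).

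First, assume that $c$ can be chosen non-special. By Proposition \ref{prop_classe_infini_au_bord}, such a $c$ is either of the form $\l-e_{p_0}$ with $p_0\in\P^2$, or of the form $3\l-\sum_{i=0}^{8}e_{p_i}$ with $\{p_0,\dots,p_8\}$ in almost general position. In the first case, Proposition \ref{prop_classe_infini_type_jonq} tells us that $f$ must be of Jonquières characteristic whose inverse has maximal base point $p_0$, and conversely every such $f$ gives rise to this intersection by Proposition \ref{prop_classe_bord_infini_cellule_adjacente}.\ref{prop_classe_bord_infini_jonq}. In the second case, Proposition \ref{prop_classes_bord_voisin_id} shows that $f$ has at most $9$ base points and that the support of $f^{-1}$ is contained in $\{p_0,\dots,p_8\}$, and the position being almost general passes to $\supp(f^{-1})$ by Lemma \ref{lemme_config_f_config_inverse}; conversely, starting from such an $f$ one completes the support to a $9$-tuple in almost general position (this completion is possible exactly because the $\leq 8$ base points of $f^{-1}$ are in almost general position) and invokes Proposition \ref{prop_classe_bord_infini_cellule_adjacente}.\ref{prop_classe_bord_infini_gen}.

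Next, suppose that every $c\in\partial_{\infty}\V(\id)\cap\partial_{\infty}\V(f)$ is special. Then by Remark \ref{classe_infini_cas_special} the map $f$ is again of Jonquières characteristic (with $p_0$ the point of maximal multiplicity of $c$ as maximal base point of $f^{-1}$), so this case contributes nothing outside the list already obtained.

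Putting the two cases together shows that the germs of cells quasi-adjacent to $\V(\id)$ are exactly the applications of Jonquières characteristic together with the applications having at most $9$ base points whose inverse has base points in almost general position, which is the statement of the corollary. The only subtle point is the converse direction for the $9$-point case: one must verify that an application $f$ with $r\leq 9$ base points of $f^{-1}$ in almost general position admits a completion of its base locus to a $9$-tuple in almost general position. For $r=9$ there is nothing to do, and for $r\leq 8$ this is an elementary existence statement which follows either directly from the geometry of (weak) del Pezzo surfaces via Proposition \ref{prop_weak_del_Pezzo_presque_general} or by a direct genericity argument choosing the extra points in $\P^2$ avoiding the finitely many forbidden alignments and conics; this is the only step not literally contained in the cited propositions, but it is routine.
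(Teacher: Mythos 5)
Votre preuve est correcte et suit exactement la démarche du texte : l'article se contente de citer la remarque \ref{classe_infini_cas_special} et les propositions \ref{prop_classe_infini_au_bord}, \ref{prop_classe_infini_type_jonq} et \ref{prop_classes_bord_voisin_id}, et votre rédaction ne fait qu'expliciter cette combinaison (disjonction spéciale/non spéciale, puis les deux formes possibles de la classe au bord, plus la réciproque via la proposition \ref{prop_classe_bord_infini_cellule_adjacente}). Le seul point que vous signalez en plus — l'existence d'une complétion de $r\leq 8$ points en position presque générale en neuf points en position presque générale — est effectivement laissé implicite dans l'énoncé de la proposition \ref{prop_classe_bord_infini_cellule_adjacente}.\ref{prop_classe_bord_infini_gen}, et votre justification par généricité est adéquate.
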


\begin{cor}\label{cor_2cellules_quasi_adjacentes}
	Soient $f,g\in\Bir(\P^2)$ telles que les cellules $\V(f)$ et $\V(g)$ soient quasi-adjacentes à la cellule associée à l'identité. Notons $p_0,\dots,p_{k-1}$ l'union des points-base de $f^{-1}$ et de $g^{-1}$. Les trois cellules $\V(\id)$, $\V(f)$ et $\V(g)$ ont une classe commune dans leur bord à l'infini si et seulement si nous sommes dans l'un des deux cas suivants : 
	\begin{itemize}[wide]
		\item $f^{-1}$ et $g^{-1}$ sont deux applications de caractéristique Jonquières ayant le même point-base maximal, ou un même point-base maximal dans le cas où l'une est quadratique,
		\item les points $\{p_0,\dots,p_{k-1}\}$ sont en position presque générale et $k\leq 9$.
	\end{itemize}
\end{cor}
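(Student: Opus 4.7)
Le plan est de déduire le résultat directement des caractérisations obtenues aux propositions \ref{prop_classe_infini_au_bord}, \ref{prop_classe_infini_type_jonq} et \ref{prop_classes_bord_voisin_id}, ainsi que du corollaire \ref{cor_cellules_quasi_adjacentes}.

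Pour l'implication directe, supposons qu'il existe $c\in\partial_{\infty}\V(\id)\cap\partial_{\infty}\V(f)\cap\partial_{\infty}\V(g)$. En particulier $c\in\partial_{\infty}\V(\id)\cap\partial_{\infty}\V(f)$ donc, si $c$ n'est pas spéciale, la proposition \ref{prop_classe_infini_au_bord} donne deux possibilités. Si $c=\l-e_{p}$ pour un point $p\in\P^2$, la proposition \ref{prop_classe_infini_type_jonq} appliquée successivement à $f$ et à $g$ entraîne que $f^{-1}$ et $g^{-1}$ sont toutes deux de caractéristique Jonquières admettant $p$ comme point-base maximal. Lorsqu'aucune des deux n'est quadratique, ce point-base maximal est unique et donc commun ; lorsque l'une est quadratique, ses trois points-base sont tous de même multiplicité maximale et il suffit que $p$ soit l'un d'eux. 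Si $c=3\l-\sum_{i=0}^{8}e_{p_i}$ avec $\{p_0,\dots,p_8\}$ en position presque générale, la proposition \ref{prop_classes_bord_voisin_id} entraîne que les supports de $f^{-1}$ et de $g^{-1}$ sont tous deux inclus dans $\{p_0,\dots,p_8\}$. L'union de leurs points-base est donc incluse dans un ensemble de $9$ points en position presque générale, ce qui montre qu'elle est elle-même en position presque générale et de cardinal au plus $9$. Pour le cas où $c$ est spéciale, la remarque \ref{classe_infini_cas_special} implique qu'à la fois $f^{-1}$ et $g^{-1}$ sont de caractéristique Jonquières dont le point-base maximal est le point de multiplicité maximale de $c$, ce qui nous ramène au premier cas.

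Pour la réciproque, il faut exhiber une classe commune dans chaque situation. Si $f^{-1}$ et $g^{-1}$ sont de caractéristique Jonquières admettant un point-base maximal commun $p_0\in\P^2$, la proposition \ref{prop_classe_bord_infini_cellule_adjacente}.\ref{prop_classe_bord_infini_jonq} affirme que $\l-e_{p_0}$ appartient à $\partial_{\infty}\V(\id)\cap\partial_{\infty}\V(f)$ et à $\partial_{\infty}\V(\id)\cap\partial_{\infty}\V(g)$, donc aux trois bords à l'infini simultanément. Si au contraire les points $\{p_0,\dots,p_{k-1}\}$ sont en position presque générale avec $k\leq 9$, nous complétons cette famille par $9-k$ points choisis dans $\P^2$ en évitant les conditions algébriques finies qui définissent la position presque générale (quatre points alignés ou sept points sur une conique), afin d'obtenir un ensemble $\{p_0,\dots,p_8\}$ en position presque générale. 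La proposition \ref{prop_classe_bord_infini_cellule_adjacente}.\ref{prop_classe_bord_infini_gen} appliquée séparément à $f$ et à $g$ fournit alors la classe commune $3\l-\sum_{i=0}^{8}e_{p_i}$.

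La principale subtilité réside dans la possibilité de compléter $\{p_0,\dots,p_{k-1}\}$ en un ensemble de neuf points en position presque générale : il faut s'assurer que les contraintes imposées par les points déjà fixés n'empêchent jamais d'ajouter de nouveaux points génériques de $\P^2$, ce qui est le cas puisque ces contraintes sont fermées et strictes. Il faut également prendre garde, dans le cas Jonquières, au fait qu'une application quadratique n'a pas un unique point-base maximal et qu'on ne peut donc parler que d'un point-base maximal commun, d'où la formulation soignée de l'énoncé.
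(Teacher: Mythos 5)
Votre démonstration est correcte et suit essentiellement la même démarche que celle de l'article : le sens direct s'obtient en combinant la remarque \ref{classe_infini_cas_special} avec les propositions \ref{prop_classe_infini_au_bord}, \ref{prop_classe_infini_type_jonq} et \ref{prop_classes_bord_voisin_id}, et la réciproque en complétant $\{p_0,\dots,p_{k-1}\}$ en neuf points en position presque générale. Un seul détail est à rectifier dans la réciproque : la proposition \ref{prop_classe_bord_infini_cellule_adjacente}.\ref{prop_classe_bord_infini_gen} n'est énoncée que pour les applications ayant au plus $8$ points-base, alors que $f^{-1}$ ou $g^{-1}$ peut en avoir exactement $9$ (l'énoncé autorise $k\leq 9$) ; il faut donc invoquer à la place la proposition \ref{prop_classes_bord_voisin_id}, dont l'équivalence couvre les applications ayant jusqu'à neuf points-base et dont le support de l'inverse est inclus dans $\{p_0,\dots,p_8\}$.
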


\begin{proof}
	Supposons que les cellules $\V(\id)$, $\V(f)$ et $\V(g)$ possèdent une classe $c$ commune au bord à l'infini et que $f^{-1}$ et $g^{-1}$ ne soient pas des applications de caractéristique Jonquières ayant le même point-base maximal. Si la classe $c$ est spéciale alors par la remarque \ref{classe_infini_cas_special}, les applications $f$ et $g$ doivent être des applications de Jonquières de même point-base ce qui contredit notre hypothèse. La classe $c$ n'est donc pas spéciale. Ainsi, la proposition \ref{prop_classe_infini_au_bord} nous dit qu'il n'y a que deux possibilités pour cette classe. Les propositions \ref{prop_classe_infini_type_jonq} et \ref{prop_classes_bord_voisin_id} impliquent que $c$ est une classe $9$-symétrique pure dont le support contient les supports de $f^{-1}$ et de $g^{-1}$ : \[c=3\l-\sum\limits_{i=0}^{8}e_{p_i}\in\partial_{\infty}\V(f)\cap\partial_{\infty}\V(g)\cap\partial_{\infty}\V(\id).\] Ainsi, $k\leq 9$ et, d'après la remarque \ref{existence_pointe_9_sym}, les points du support de $c$ (et par conséquent $\{p_0,\dots,p_{k-1}\}$) sont en position presque générale.
	
	Montrons la réciproque. Si $f^{-1}$ et $g^{-1}$ sont des applications de caractéristique Jonquières et de même point-base maximal $p_0$ alors la classe $\l-e_{p_0}$ appartient au bord à l'infini des trois cellules par la proposition \ref{prop_classe_bord_infini_cellule_adjacente}.\ref{prop_classe_bord_infini_jonq}. 
	Considérons le cas où $f^{-1}$ et $g^{-1}$ ne sont pas des applications de caractéristique Jonquières ayant même point-base maximal. 
	Notons $\{p_0,\dots,p_{k-1}\}$ l'union des points-base de $f^{-1}$ et de $g^{-1}$, et supposons que cet ensemble soit en position presque générale et $k\leq 9$. Alors nous pouvons compléter cette famille de sorte que $\{p_0,\dots,p_8\}$ soit un ensemble de points en position presque générale. D'après la remarque \ref{existence_pointe_9_sym}, la classe $3\l-\sum\limits_{i=0}^{8}e_{p_i}$ appartient au bord à l'infini de $\V(\id)$, et d'après la proposition \ref{prop_classes_bord_voisin_id} nous obtenons qu'elle appartient également au bord à l'infini de $\V(g)$ et de $\V(f)$ comme annoncé.
\end{proof}
	\bibliographystyle{smfalpha}
	\bibliography{biblio}

	\end{document}